\definecolor{verylight}{gray}{0.97}
\definecolor{light}{gray}{0.9}
\definecolor{medium}{gray}{0.85}
\definecolor{dark}{gray}{0.6}
 \def\NZQ{\mathbb}               
 \def\ZZ{{\NZQ Z}}
 \def\RR{{\NZQ R}}
 \def\CC{{\NZQ C}}
 \def\PP{{\NZQ P}}
 \def\frk{\mathfrak}               
 \def\mm{{\frk m}}
 \def\Ic{{\mathcal I}}
 \def\G{{\mathcal G}}
 \def\ab{{\mathbf a}}
 \def\xb{{\mathbf x}}
 \def\opn#1#2{\def#1{\operatorname{#2}}} 
 \opn\chara{char} \opn\length{\ell} \opn\pd{pd} \opn\rk{rk}
 \opn\projdim{proj\,dim} \opn\injdim{inj\,dim} \opn\rank{rank}
 \opn\depth{depth} \opn\grade{grade} \opn\height{height}
 \opn\embdim{emb\,dim} \opn\codim{codim}
 \opn\Tr{Tr} \opn\bigrank{big\,rank}
 \opn\superheight{superheight}\opn\lcm{lcm}
 \opn\trdeg{tr\,deg}
 \opn\reg{reg} \opn\lreg{lreg} \opn\ini{in} \opn\lpd{lpd}
 \opn\size{size} \opn\sdepth{sdepth}
 \opn\link{link}\opn\fdepth{fdepth}\opn\lex{lex}
 \opn\tr{tr}
 \opn\type{type}
 \opn\gap{gap}
 \opn\arithdeg{arith-deg}
 \opn\div{div} \opn\Div{Div} \opn\cl{cl} \opn\Cl{Cl}
 \opn\Spec{Spec} \opn\Supp{Supp} \opn\supp{supp} \opn\Sing{Sing}
 \opn\Ass{Ass} \opn\Min{Min}\opn\Mon{Mon}
 \opn\Ann{Ann} \opn\Rad{Rad} \opn\Soc{Soc}
 \opn\Im{Im} \opn\Ker{Ker} \opn\Coker{Coker} \opn\Am{Am}
 \opn\Hom{Hom} \opn\Tor{Tor} \opn\Ext{Ext} \opn\End{End}
 \opn\Aut{Aut} \opn\id{id}
 \opn\nat{nat}
 \opn\pff{pf}
 \opn\Pf{Pf} \opn\GL{GL} \opn\SL{SL} \opn\mod{mod} \opn\ord{ord}
 \opn\Gin{Gin} \opn\Hilb{Hilb}\opn\sort{sort}
 \opn\PF{PF}\opn\Ap{Ap}
 \opn\mult{mult}
 \opn\aff{aff}
 \opn\relint{relint} \opn\st{st}
 \opn\lk{lk} \opn\cn{cn} \opn\core{core} \opn\vol{vol}  \opn\inp{inp} \opn\nilpot{nilpot}
 \opn\link{link} \opn\star{star}\opn\lex{lex}\opn\set{set}
 \opn\width{wd}
 \opn\Fr{F}
 \opn\QF{QF}
 \opn\G{G}
 \opn\type{type}\opn\res{res}
 \opn\conv{conv}
 \opn\gr{gr}
 \def\pot#1#2{#1[\kern-0.28ex[#2]\kern-0.28ex]}
 \opn\dirlim{\underrightarrow{\lim}}
 \opn\inivlim{\underleftarrow{\lim}}
 \let\union=\cup
 \let\sect=\cap
 \let\tensor=\otimes
 \let\iso=\cong
 \let\Union=\bigcup
 \let\Sect=\bigcap
 \let\Dirsum=\bigoplus
 \let\to=\rightarrow
 \def\Implies{\ifmmode\Longrightarrow \else
         \unskip${}\Longrightarrow{}$\ignorespaces\fi}
 \def\implies{\ifmmode\Rightarrow \else
         \unskip${}\Rightarrow{}$\ignorespaces\fi}
 \def\iff{\ifmmode\Longleftrightarrow \else
         \unskip${}\Longleftrightarrow{}$\ignorespaces\fi}
 \newtheorem{Theorem}{Theorem}[section]
 \newtheorem{Lemma}[Theorem]{Lemma}
 \newtheorem{Corollary}[Theorem]{Corollary}
 \newtheorem{Proposition}[Theorem]{Proposition}
 \newtheorem{Remark}[Theorem]{Remark}
 \newtheorem{Examples}[Theorem]{Examples}
 \newtheorem{Definition}[Theorem]{Definition}
 \let\epsilon\varepsilon
 \let\kappa=\varkappa
 \def\qed{\ifhmode\textqed\fi
       \ifmmode\ifinner\quad\qedsymbol\else\dispqed\fi\fi}
 \def\textqed{\unskip\nobreak\penalty50
        \hskip2em\hbox{}\nobreak\hfil\qedsymbol
        \parfillskip=0pt \finalhyphendemerits=0}
 \def\dispqed{\rlap{\qquad\qedsymbol}}
 \opn\dis{dis}
 \def\pnt{{\raise0.5mm\hbox{\large\bf.}}}
 \opn\Lex{Lex}
\begin{document}

\title {On the monomial  reduction number of a monomial ideal in $K[x,y]$}

\author {J\"urgen Herzog, Somayeh Moradi,  Masoomeh Rahimbeigi and Ali Soleyman Jahan}

\address{J\"urgen Herzog, Fachbereich Mathematik, Universit\"at Duisburg-Essen, Campus Essen, 45117
Essen, Germany} \email{juergen.herzog@uni-essen.de}

\address{Somayeh Moradi, Department of Mathematics, School of Science, Ilam University,
P.O.Box 69315-516, Ilam, Iran}
\email{somayeh.moradi1@gmail.com}

\address{Masoomeh Rahimbeigi, Department of Mathematics, University of Kurdistan, Post
Code 66177-15175, Sanandaj, Iran}
\email{rahimbeigi-masoome@yahoo.com}

\address{Ali Soleyman Jahan, Department of Mathematics, University of Kurdistan, Post
Code 66177-15175, Sanandaj, Iran}
\email{solymanjahan@gmail.com}

\dedicatory{ }

\begin{abstract}
The reduction  number of monomial ideals in the polynomial $K[x,y]$ is studied. We focus on ideals $I$ for which $J=(x^a,y^b)$ is a reduction ideal. The computation of the reduction number amounts to solve linear inequalities. In some special cases the reduction number can be explicitly computed.
\end{abstract}

\thanks{}

\subjclass[2010]{Primary 13F20; Secondary  13H10}


\keywords{Monomial ideals, monomial reductions, reduction numbers, powers of monomial ideals}

\maketitle

\setcounter{tocdepth}{1}

\section*{Introduction}
Let $K$ be a field. In this paper we study monomial  ideals  in the polynomial ring $K[x,y]$  for which $J=(x^a,y^b)$ is a reduction ideal for suitable integers $a,b\geq 1$. We denote this class of ideals by $\Ic_{a,b}$. Thus $I\in \Ic_{a,b}$ if and only if $I^{r+1}=JI^r$ for some integer $r\geq 0$. The smallest integer $r$,  with this property,  denoted $r(I)$, is called the {\em reduction number} of $I$ with respect to $J$. The main concern of this paper is to bound $r(I)$ and to compute it explicitly in some special cases.

The monomial ideals belonging to $\Ic_{a,b}$ can be described as follows: for $u\in K[x,y]$, $u=x^cy^d$, let $\nu(u)=a/c+b/d$. Then it is easy to see that $I\in \Ic_{a,b}$ if and only if $\nu(u)\geq 1$ for all $u\in I$. We call an ideal $I\in \Ic_{a,b}$ {\em quasi-equigenerated}, if $\nu(u)=1$ for all $u\in G(I)$. Here $G(I)$ is the unique set of minimal monomial generators of $I$. Note that  $I\in \Ic_{a,b}$ is quasi-equigenerated if and only if all monomial generators of $I$ are of degree $ab$ with the respect to the non-standard grading of $K[x,y]$ given by $\deg(x)=b$ and  $\deg(y)=a$. The set of quasi-equigenerated ideals in $\Ic_{a,b}$ we denote by $\Ic_{a,b}^1$. Let $g=\gcd(a, b)$, then $I\in\Ic_{a,b}^1$, if and only if there exists  $A\subseteq  \{1,\ldots,g\}$ with $1,g\in A$ such that $I=I_A$, where $G(I_A)=\{x^{i\frac{a}{g}}y^{b-i\frac{b}{g}}\:\; i\in A\}$.

Vasconcelos \cite{Va}  gives  an  upper bound for the reduction number  of the graded maximal ideal of a standard graded $K$-algebra $A$  in terms of the  arithmetic degree $A$. This bound applied to the fiber $F(I)$ of $I$ yields the inequality $r(I)<\arithdeg(F(I))$. In general the arithmetic
degree is hard to compute, but in the case that $I$ is quasi-equigenerated, the arithmetic degree of $F(I)$ coincides with the multiplicity $e(F(I))$  of $F(I)$. This fact enables us to  show  that $r(I)<g/\gcd(A)$ for $I\in \Ic_{a,b}^1$ where $g=\gcd(a,b)$ and $I=I_A$, see Theorem~\ref{cold}. For the convenience of the reader  we give here a self-contained  proof of the fact that $e(F(I_A)))=g/\gcd(A)$.   By using a strong result of  Gruson, Lazardsfeld and Peskine  \cite{GLP} one obtains an even stronger upper bound, namely  $r(I)\leq g/\gcd(A)-|A|+2$ (Theorem~\ref{GLP}). In the special case considered here this bound is also the consequence of a beautiful result of Hoa and St\"uckrad \cite[Theorem 1.1]{HSt}

In Theorem~\ref{Somayeh} it is shown that for any integers $1\leq a\leq b$ and any $1\leq j\leq g-1$ where $g=\gcd(a,b)$,  there exist $I_A\in \Ic_{a,b}$ with $\gcd(A)=1$ and $r(I_A)=j$. This shows that reduction number can take any value up to the bound given in Theorem~\ref{cold}. In the next result, Theorem ~\ref{redone},   the monomial ideals $I=I_A\in \Ic_{a,b}^1$ for which $r(I)=1$ are characterized, and in Proposition~\ref{masiproves} a precise formula for the reduction number of $3$-generated $I_A\in \Ic_{a,b}^1$ is given. This result is used in Theorem~\ref{sunshine} to classify  the monomial ideals in $\Ic_{a,b}$ which attain the maximal possible value for the reduction number, namely the number $g-1$ where $g=\gcd(a,b)$. We close Section~\ref{value} by discussing  for a given integer $a$ the ratio $m_a(j)/m_a$, where $m_a$ is the cardinality of $\Ic_{a,b}^1$ and $m_a(j)$ is the cardinality of the monomial ideals in  $\Ic_{a,b}^1$ with  $r(I)=j$. We expect that $\lim_{a\to\infty}m_a(j)/m_a$ exists and is equal to $0$. This can be easily shown for $j=1$. On the other hand, if one asks a similar question for $3$-generated ideals, then by using a famous result from number theory due to  Hardy and Wright \cite{HW}, we show in Proposition~\ref{ohohoh} that such a limit does  not always  exist.

In Section~\ref{regensburg} we consider more closely the reduction number of monomial ideals in $\Ic_{a,b}$ which are not necessarily quasi-equigenerated. It turns out, see Proposition~\ref{goulash},  that $\min\{a,b\}-1$ is a bound for reduction number of such ideals. Let $I_p=(x^a,u_p,y^b)$ where $u_p=x^cy^d$ for $p=(c,d)$ and  $\nu(u_p)\geq 1$.  For fixed numbers $a$ and $b$, $r(I_p)$ depends only on position of $p$. For example, $r(I_p)=a-1$ if $p=(1,b-1)$ and $r(I_p)=1$ if and only if $a/2\leq c$ and $b/2\leq d$, see Proposition \ref{freshliver}. Let $R_{(a,b)}$ be the set of numbers $r(I_p)$ with $I_p\in\Ic_{a,b}$. By what we said before it follows that $R_{(a,b)}\subseteq\{1,\ldots, \min\{a,b\}-1\}$. In general, the difference between these  two sets  can be as big as we want. Indeed, in Proposition~\ref{ourlimits} we show that for any prime number $p\geq 2$   we have $(p-1)-|R_{(p,p)}|\geq(p-1)/2-1$. In contrast to this result we show in Proposition~\ref{specialnight} that for any integer $a\geq 2$ we have $\bigcup_b R_{(a,b)}=\{1,\ldots,a-1\}$.

A comparison between the reduction number of an ideal $I\in\Ic_{a,b}$ and its quasi-equigenerated part $I_0\in \Ic_{a,b}^1$ is made in Section~\ref{part}. It is shown in Proposition~\ref{turtle} that $r(I_0)\leq r(I)$. In the case that $I=I_A+I'$ where $I'=(u:\nu(u)>r)$, this comparison shows that the upper  bound given by Gruson, Lazardsfeld and Peskine \cite{GLP} holds for  $r(I)$, even if $I$ is not quasi-equigenerated. This fact  applies in particular to lex ideals in $\Ic_{a,b}$ when $a<b$.

In the following Section~\ref{unique} we prove the surprising fact, shown in Corollary~\ref{minimal}, that for any monomial ideal in $I\in\Ic_{a,b}$ there exist unique  monomial ideal $I\subseteq L$ with $r(L)=1$ and such that $r(L')>1$ for all $L'$ with $I\subseteq L'\subset L$. Then in Theorem~\ref{wetransfer} we compute all the generators of $L$ when $I$ itself is generated by  $3$ elements. Finally in the last section we study the reduction number of powers of ideals belonging to $\Ic_{a,b}$.  By a result of Hoa \cite[Proposition ]{Hoa}, $r(I^k)\leq \lceil (r(I)-1)/k\rceil +1$ for $I\in \Ic_{a,b}$. This implies in particular that $r(I^{kl})\leq r(I^l)$ for all $k,l\geq 1$.  However, we do not know whether  $r(I^{k+1})\leq r(I^k)$ for all $k\geq 0$. In Theorem~\ref{related} all ideals  $I\in \Ic_{a,b}^1$ with the property that $r(I^k)=1$ for $k\gg 0$ are characterized, and it is shown that if $r(I^{k_0})=1$ for some $k_0$, then $r(I^k)=1$ for $k\geq k_0$.
In Proposition~\ref{masoomehproves} we show that for $I\in \Ic_{a,b}^1$ the number $inf\{k:r(I^k)=1\}$ can take any value between $1$ and $\gcd(a,b)-2$, and in Proposition~\ref{hoa} it is shown that for $k\geq a-2$, $r(I^k)$ is constant $1$ or constant $2$ for all $I\in\Ic_{a,a}$, depending on $xy^{a-1},x^{a-1}y$ belong to $I$ or not belong to $I$.

\section{Preliminaries}
\label{preliminaries}

Let $R$ be a Noetherian ring  and $I\subset R$ be an ideal. An ideal $J\subseteq I$ is called a {\em reduction ideal} of $I$, if $I^{k+1}=JI^k$ for some $k\geq 1$.
The smallest number $ k $ for which  $ I^{k+1}=JI^{k} $ is called the  {\em reduction number} of $I$ with respect to $J$, and is denoted by $r _{J}(I) $.

\begin{Definition}
{\em  Let $K$ be a field and $S=K[x_1,\ldots,x_n]$ the polynomial ring over $K$ in $n$ interminates. Let $I\subset S$ be a  monomial ideal. A {\em monomial reduction} of $I$ is a monomial ideal $J\subset I$ which is a reduction ideal of $I$.  A monomial reduction ideal $J$  of $I$ is called a {\em minimal monomial reduction ideal} of $I$, if any monomial ideal $L$ which is properly contained in $J$, is not a reduction ideal of $I$.}
\end{Definition}


In order to  describe the minimal monomial reductions of a monomial ideal, we introduce some notation.
Let $u=x_1^{a_1}x_2^{a_2}\cdots x_n^{a_n}$ be a monomial in $S$. Then $\ab=(a_1,a_2,\ldots,a_n)$ is called the {\em exponent vector} of $u$, and  we write $u=\xb^\ab$.  By a result of Singla \cite[Proposition 2.1]{Si},  $I$ admits a unique minimal monomial reduction. More precisely, she shows the following:
\begin{Proposition}
 \label{pooja}
Let $I\subset K[x_1,\ldots,x_n]$ be a monomial ideal, and   $\conv(I)$ be the convex hull of the set $\{\ab\in \RR^n\: \xb^{\ab}\in I\}$. Then  $\conv(I)$ is a polyhedron. Let $\{\ab_1\},\ldots,\{\ab_r\}$ be the  $0$-dimensional faces of $\conv(I)$. Then $J=(\xb^{\ab_1},\ldots, \xb^{\ab_r})$ is the unique minimal monomial reduction of $I$.
\end{Proposition}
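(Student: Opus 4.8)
The plan is to identify $J$ with the ideal read off from the vertices of $\conv(I)$, reducing everything to two ingredients: elementary facts about extreme points of $\conv(I)$, and the classical fact that a monomial ideal with the same Newton polyhedron as $I$ is a reduction of $I$. Set $E(I)=\{\ab\in\NN^n:\xb^{\ab}\in I\}$. Since $I$ is monomial, $E(I)=E(I)+\NN^n$, and as $G(I)$ is finite,
\[
\conv(I)=\conv\{\ab : \xb^{\ab}\in G(I)\}+\RR_{\geq 0}^n .
\]
Being the Minkowski sum of a polytope and the orthant, $\conv(I)$ is a polyhedron; being contained in $\RR_{\geq 0}^n$ it contains no line, so by Minkowski--Weyl it has finitely many vertices $\ab_1,\dots,\ab_r$ and $\conv(I)=\conv\{\ab_1,\dots,\ab_r\}+\RR_{\geq 0}^n$ (one checks the recession cone is exactly $\RR_{\geq 0}^n$). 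An extreme--point argument then shows each $\ab_j$ is the exponent of a minimal generator of $I$: writing $\ab_j=\sum_i\lambda_i\eb_i+w$ with the $\eb_i$ exponents of generators of $I$, $\lambda_i\ge 0$, $\sum_i\lambda_i=1$, $w\in\RR_{\geq 0}^n$, the identity $\ab_j=\sum_i\lambda_i(\eb_i+w)$ and extremality force $\ab_j=\eb_i+w$ whenever $\lambda_i>0$; and if $w\ne 0$, picking a coordinate $k$ with $w_k>0$ gives $\ab_j=\tfrac12\big((\ab_j-\epsilon e_k)+(\ab_j+\epsilon e_k)\big)$ with both points in $\conv(I)$ for small $\epsilon>0$ (for $\ab_j-\epsilon e_k$ because $\eb_i\le\ab_j-\epsilon e_k$), contradicting that $\ab_j$ is a vertex. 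Hence $\xb^{\ab_j}\in G(I)$, so $J:=(\xb^{\ab_1},\dots,\xb^{\ab_r})\subseteq I$ with $G(J)=\{\xb^{\ab_1},\dots,\xb^{\ab_r}\}$ and $\conv(J)=\conv(I)$.

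Next I would show that $\conv(J)=\conv(I)$ forces $J$ to be a reduction of $I$. If $\xb^{\eb}\in G(I)$, then $\eb\in\conv(I)=\conv(J)$, so $\eb$ is a convex combination of $\ab_1,\dots,\ab_r$ plus a vector in $\RR_{\geq 0}^n$; as $\eb$ and the $\ab_i$ are integral this combination may be taken rational, and clearing denominators gives $(\xb^{\eb})^N\in J^N$ for an $N$ that can be chosen uniformly over the finitely many generators of $I$. A pigeonhole over $G(I)$ then yields $I^{k+1}\subseteq J\,I^k$, hence $I^{k+1}=J\,I^k$, for all $k\gg 0$, so $J$ is a reduction of $I$. (Alternatively, one may invoke the classical facts that $J\subseteq I$ is a reduction iff $\overline J=\overline I$, and that the integral closure of a monomial ideal has exponent set the lattice points of its Newton polyhedron.)

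Finally, minimality and uniqueness both follow from the claim that \emph{every monomial ideal $L\subseteq I$ which is a reduction of $I$ contains $J$}. To see this, fix $m$ with $I^{m+1}=L\,I^m$. For each $i$ we have $\xb^{(m+1)\ab_i}\in I^{m+1}=L\,I^m$, so $(m+1)\ab_i\ge\bb+c_1+\dots+c_m$ with $\xb^{\bb}\in G(L)$ and $\xb^{c_1},\dots,\xb^{c_m}\in I$; dividing by $m+1$ exhibits $\ab_i$ as a convex combination, with equal weights, of the $m+1$ points $\bb+w',c_1+w',\dots,c_m+w'$, where $w'\in\RR_{\geq 0}^n$ and all these points lie in $\conv(I)$ since $\conv(I)+\RR_{\geq 0}^n=\conv(I)$. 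As $\ab_i$ is a vertex of $\conv(I)$ it must equal each of them, in particular $\ab_i=\bb+w'$, so $\xb^{\bb}$ divides $\xb^{\ab_i}$ and $\xb^{\ab_i}\in L$. Thus $J\subseteq L$. Combined with the previous step, $J$ is a reduction of $I$ that is contained in every monomial reduction of $I$, hence $J$ is the unique minimal monomial reduction. I expect the main obstacle to be the middle step — that equality of Newton polyhedra already makes $J$ a reduction — which carries the real content of the statement; the surrounding arguments are elementary convex geometry and bookkeeping.
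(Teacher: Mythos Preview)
Your argument is correct. Note, however, that the paper does not supply its own proof of this proposition: it is quoted as a result of Singla \cite[Proposition~2.1]{Si} and stated without proof, so there is no ``paper's proof'' to compare against. What you have written is a clean self-contained justification along standard lines---identifying the vertices of the Newton polyhedron with exponents of minimal generators via extremality, deducing that $J$ is a reduction from $\conv(J)=\conv(I)$ by clearing denominators and pigeonhole, and obtaining minimality/uniqueness from the vertex property applied to $(m+1)\ab_i\in LI^m$. One small remark on the last step: when you write $\ab_i$ as an equal-weight average of $\bb+w',c_1+w',\dots,c_m+w'$, the vector $w'$ you need is $w'=\ab_i-\tfrac{1}{m+1}(\bb+c_1+\cdots+c_m)$, which lies in $\RR_{\ge 0}^n$ but is typically not integral; this is harmless since only $\conv(I)+\RR_{\ge 0}^n=\conv(I)$ is used.
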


For example, let  $I=(x^7,x^6y^2, x^3y^3,x^2y^5,xy^6,y^{10})$. Then $J=(x^7, x^3y^3,xy^6,y^{10})$ is the unique minimal monomial reduction of $I$.  Figure~\ref{better} demonstrates this theorem in our example.

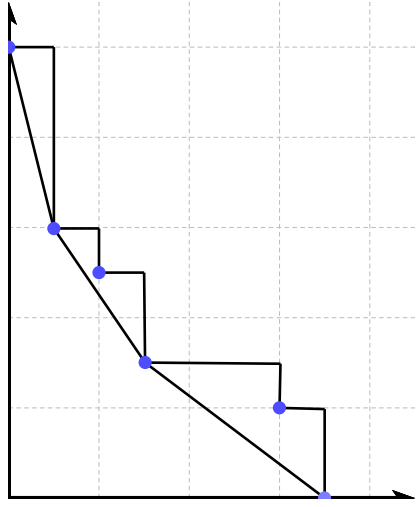
\begin{figure}[hbt]
\begin{center}
\newrgbcolor{xdxdff}{0.49019607843137253 0.49019607843137253 1.}
\newrgbcolor{ududff}{0.30196078431372547 0.30196078431372547 1.}
\psset{xunit=0.6cm,yunit=0.6cm,algebraic=true,dimen=middle,dotstyle=o,dotsize=5pt 0,linewidth=1.6pt,arrowsize=3pt 2,arrowinset=0.25}
\begin{pspicture*}(0.,0.)(9.,11.)
\multips(0,0)(0,2.0){6}{\psline[linestyle=dashed,linecap=1,dash=1.5pt 1.5pt,linewidth=0.4pt,linecolor=lightgray]{c-c}(0.,0)(9.,0)}
\multips(0,0)(2.0,0){5}{\psline[linestyle=dashed,linecap=1,dash=1.5pt 1.5pt,linewidth=0.4pt,linecolor=lightgray]{c-c}(0,0.)(0,11.)}
\psaxes[labelFontSize=\scriptstyle,xAxis=true,yAxis=true,Dx=2.,Dy=2.,ticksize=-2pt 0,subticks=2]{->}(0,0)(0.,0.)(9.,11.)
\psline[linewidth=1.pt](1.,5.9791270293466985)(2.,5.9791270293466985)
\psline[linewidth=1.pt](2.,5.)(2.,5.9791270293466985)
\psline[linewidth=1.pt](2.,5.)(3.,5.)
\psline[linewidth=1.pt](3.,5.)(3.0208729706533015,3.004970679523019)
\psline[linewidth=1.pt](3.0208729706533015,3.004970679523019)(6.0208729706533015,2.979127029346699)
\psline[linewidth=1.pt](6.0208729706533015,2.979127029346699)(6.,2.)
\psline[linewidth=1.pt](0.,10.)(1.,10.)
\psline[linewidth=1.pt](1.,10.)(1.,5.9791270293466985)
\psline[linewidth=1.pt](0.,10.)(1.,5.9791270293466985)
\psline[linewidth=1.pt](1.,5.9791270293466985)(3.0208729706533015,3.004970679523019)
\psline[linewidth=1.pt](3.0208729706533015,3.004970679523019)(7.,0.)
\psline[linewidth=1.pt](6.,2.)(7.,1.97415634982368)
\psline[linewidth=1.pt](7.,1.97415634982368)(7.,0.)
\begin{scriptsize}
\psdots[dotstyle=*,linecolor=xdxdff](7.,0.)
\psdots[dotstyle=*,linecolor=ududff](0.,10.)
\psdots[dotstyle=*,linecolor=ududff](6.,2.)
\psdots[dotstyle=*,linecolor=ududff](3.0208729706533015,3.004970679523019)
\psdots[dotstyle=*,linecolor=ududff](2.,5.)
\psdots[dotstyle=*,linecolor=ududff](1.,5.9791270293466985)
\end{scriptsize}
\end{pspicture*}
\end{center}
\caption{$\conv(I)$ for $I=(x^7,x^6y^2, x^3y^3,x^2y^5,xy^6,y^{10})$.}
\label{better}
\end{figure}

\medskip
Since the minimal monomial reduction ideal $J$ of a monomial ideal $I$ is uniquely determined, we simply write $r(I)$ for $r_J(I)$,  if consider the monomial reduction number of $I$.

Now let $I$  be a monomial ideal of height 2 in $K[x,y]$. It is customary to denote by $G(I)$ the unique minimal set of monomial generators of $I$. Since $\height(I)=2$, the ideal $I$ contains pure powers of $x$ and $y$.

\begin{Corollary}
\label{terriblenew}
Let $I\subset K[x,y]$ be a monomial ideal with $G(I)=\{x^a, y^b,\ldots\}$. Then $J=(x^a, y^b)$ is a minimal monomial  reduction ideal of $I$, if and only if for all $x^cy^d\in  G(I)$ one has $bc+ad\geq ab$.
\end{Corollary}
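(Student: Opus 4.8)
The plan is to read this off from Singla's theorem, Proposition~\ref{pooja}, by analysing the convex hull $\conv(I)$. By that proposition $I$ has a \emph{unique} minimal monomial reduction, and it is the ideal generated by the monomials $\xb^{\ab}$ with $\ab$ a vertex ($0$-dimensional face) of $\conv(I)$. So ``$J=(x^a,y^b)$ is a minimal monomial reduction of $I$'' is the same as ``$J$ is generated by the vertex monomials of $\conv(I)$'', and the Corollary reduces to the purely geometric assertion that $(a,0)$ and $(0,b)$ are the only vertices of $\conv(I)$ if and only if every exponent vector $(c,d)$ of a monomial $x^cy^d\in G(I)$ satisfies $bc+ad\ge ab$.

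I would begin by recording what holds without any hypothesis. Write $V=\{(c,d):x^cy^d\in G(I)\}$, so that $\conv(I)=\conv(V)+\RR_{\ge 0}^2$, an (unbounded) polyhedron with recession cone $\RR_{\ge 0}^2$. Since $x^a\in G(I)$ with $a$ minimal, every generator other than $x^a$ has positive $y$-exponent (otherwise it would be a proper multiple of $x^a$), so the face $\{y=0\}$ of $\conv(I)$ is the ray $(a,0)+\RR_{\ge 0}(1,0)$, whose only vertex is $(a,0)$; symmetrically $(0,b)$ is a vertex. Thus $J=(x^a,y^b)$ is the minimal monomial reduction exactly when $(a,0)$ and $(0,b)$ are the \emph{only} vertices, and by the structure theorem for polyhedra this is equivalent to $\conv(I)=\conv\{(a,0),(0,b)\}+\RR^2_{\ge 0}$. (For the ``$\Rightarrow$'' one also uses that the vertex monomials of an integral polyhedron form an antichain, hence a minimal monomial generating set, so they must be exactly $x^a,y^b$.)

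The last ingredient is the elementary identity
\[
\conv\{(a,0),(0,b)\}+\RR^2_{\ge 0}=\{(x,y)\in\RR^2_{\ge 0}: bx+ay\ge ab\},
\]
i.e.\ the region of the first quadrant on or above the line through $(a,0)$ and $(0,b)$. Given this, ``$\Leftarrow$'' is immediate: if $bc+ad\ge ab$ for all $(c,d)\in V$ then, since $bx+ay$ is nondecreasing in each variable and every element of $I$ is a multiple of a generator, all exponent vectors of monomials of $I$ lie in the half-plane $\{bx+ay\ge ab\}$, hence so does $\conv(I)$; together with $(a,0),(0,b)\in\conv(I)$ and recession cone $\RR^2_{\ge 0}$ this forces $\conv(I)=\{bx+ay\ge ab\}\cap\RR^2_{\ge 0}$, whence $J$ is the minimal monomial reduction. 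For ``$\Rightarrow$'', if $J$ is the minimal monomial reduction then $\conv(I)=\conv\{(a,0),(0,b)\}+\RR^2_{\ge 0}=\{bx+ay\ge ab\}\cap\RR^2_{\ge 0}$, and each generator exponent $(c,d)$ lies in $\conv(I)$, so $bc+ad\ge ab$.

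All steps are routine and I do not anticipate a genuine obstacle. The only places to be careful are the bookkeeping identifying ``$J$ is a minimal monomial reduction'' with ``$J$ is generated by the vertex monomials of $\conv(I)$'' (which rests on the uniqueness in Proposition~\ref{pooja}), and verifying the half-plane description of $\conv\{(a,0),(0,b)\}+\RR^2_{\ge 0}$ together with the recession-cone argument; both are elementary.
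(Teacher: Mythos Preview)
Your proposal is correct and follows essentially the same approach as the paper: both reduce the question to Proposition~\ref{pooja} and then identify when the vertex set of $\conv(I)$ is exactly $\{(a,0),(0,b)\}$ via the line $bx+ay=ab$ and the associated half-plane $H^+_{a,b}$. The paper's proof is terser (it asserts the equivalence of the vertex condition with the half-plane condition directly), while you spell out the recession-cone description and why $(a,0),(0,b)$ are always vertices, but the underlying argument is the same.
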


\begin{proof}
Let $L$ be the line   in $\RR^2$  passing through the points $(a,0)$ and $(0,b)$. Then  $L=\{(x,y)\:\; bx+ay=ab\}$. Let $H^+_{a,b}$ be the halfspace defined by $L$   which does not contain the origin $(0,0)$. Then $(c,d)\in H^+_{a,b}$ if and only if $bc+ad\geq ab$. Therefore, the set of $0$-dimensional faces of $\conv(I)$ is equal to  $\{(a,0), (0,b)\}$ if and only if $bc+ad\geq ab$ for all $x^cy^d\in G(I)$. Hence the desired conclusion follows from Proposition~\ref{pooja}.
\end{proof}

For any monomial $u\in K[x,y]$, $u=x^cy^d$, let
\[
\nu(u)=(bc+ad)/ab.
\]
Note that for any two monomials $u,v\in K[x,y]$ one has $\nu(uv)=\nu(u)+\nu(v)$.

\medskip
We denote by $\Ic_{a,b}$ the set of monomial ideals $I\subset K[x,y]$ with $x^a,y^b\in G(I)$ and $\nu(u)\geq 1$ for all $u\in G(I)$. By Corollary~\ref{terriblenew}, the monomial ideals $I\in \Ic_{a,b}$ are precisely  the monomial ideals in $K[x,y]$ for which $J=(x^a,y^b)$ is the (unique) minimal  monomial reduction ideal.

Furthermore, we denote by $\Ic_{a,b}^1\subset \Ic_{a,b}$ the set of monomial ideals $I\in \Ic_{a,b}$ with $\nu(u)=1$ for all $u\in G(I)$. Note that  $I\in \Ic_{a,b}^1$ if and only if $(c,d)\in L$ for all $x^cy^d\in G(I)$, where,  as in the proof of Corollary~\ref{terriblenew}, $L$ is the line  passing through the points $(a,0)$ and $(0,b)$.

If $a=b$, and $I\in \Ic_{a,b}^1$, then  $I$ is {\em equigenerated}, that is, all monomial generators of $I$ are of same degree, namely of degree $a$. In general, if $a$ and $b$ are not necessarily equal, and $I\in \Ic_{a,b}^1$, then $I$ is {\em quasi-equigenerated} in the sense that $\deg(u)=ab$ for all $u\in G(I)$ with respect to the non-standard grading $\deg(x)=b$ and $\deg(y)=a$.
For example, $(x^3,y^6, x^2y^4)\in \Ic_{3,6}$,  $(x^3,y^6, xy^4)\in \Ic_{3,6}^1$,  and
$(x^3,y^6, xy)\not \in \Ic_{3,6}$.

\medskip
Let $L_0\subset L$ be the line segment connecting $(a,0)$ and $(0, b)$, and let $g=\gcd(a,b)$. Then
\begin{eqnarray}
\label{hand}
C=\{(i\frac{a}{g}, b-i\frac{b}{g})\:\; i=0,\ldots,g\}
\end{eqnarray}
is the set of integer points on $L_0$. Thus, if $I\in \Ic_{a,b}^1$, then there exists a  unique  subset $A\subset [0,g]$ with $0,g\in A$ such that $I=I_A$, where
\[
I_A=(x^{i\frac{a}{g}}y^{b-i\frac{b}{g}}\:\; i\in A),
\]
and $\Ic_{a,b}^1=\{I_A\:\ A\subset [0,g] \text{ with } 0,g\in A\}$. Here, for two integers $c\leq d$,  we denote by $[c,d]$  the set of integers $\{i\:\; c\leq i\leq d\}$.

Let   $B\subset  [0, g]$ with  $ 0,g \in B$ and $I_B= (x^{i\frac{a}{g}}y^{ b-i\frac{b}{g}})_{i\in B}$, then   $I_A+I_B=I_{A\union B}$.

We set
\[
A+B=\{a+b\:\; a\in A,b\in B\},
\]
and define  $kA$ recursively by setting $1A=A$ and $kA=A+(k-1)A$. Then $I_AI_B=I_{A+B}$ and $I_A^k=I_{kA}$.

\medskip
There is a strong relationship between reduction  numbers of an ideal $I$  and algebraic invariants of the fiber cone of $I$.
For a graded  ideal $I$ in the polynomial ring $S=K[x_1,\ldots,x_n]$,   $K$ is a field,   the {\em fiber cone} of $I$ is defined to be the graded $K$-algebra
\[
F(I)=\Dirsum_{k\geq 0}I^k/\mm I^k,
\]
where $\mm=(x_1,\ldots,x_n)$ is the graded maximal ideal of $S$.

\begin{Lemma}
\label{stapelbed}
Let $I\in\Ic_{a.b}$ and $J=(x^a,y^b)$, and let $\bar{J}$ be the ideal generated by the elements
$x^a+\mm I$ and $y^b+\mm I$ in $F(I)_1$, and let $\mm_{F(I)}$ be the graded maximal ideal of $F(I)$. Then $\bar{J}$ is a reduction ideal of $\mm_{F(I)}$ and
\[
r(I)=r_{\bar{J}}(\mm_{F(I)}).
\]
\end{Lemma}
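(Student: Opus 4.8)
The plan is to reduce the statement to a graded Nakayama argument. First I would record that $F(I)$ is a \emph{standard} graded $K$-algebra: it is generated over $F(I)_0=K$ by $F(I)_1=I/\mm I$, because $I^k\cdot I=I^{k+1}$ makes the multiplication map $F(I)_1\tensor F(I)_k\to F(I)_{k+1}$ surjective. Hence $\mm_{F(I)}^{k}=\Dirsum_{j\geq k}F(I)_j$ for all $k$, and since $\bar J$ is generated in degree $1$ we get $(\bar J\,\mm_{F(I)}^{s})_j=\bar J\,F(I)_{j-1}$ for $j\geq s+1$ and $0$ otherwise. So $\mm_{F(I)}^{s+1}=\bar J\,\mm_{F(I)}^{s}$ holds if and only if $F(I)_j=\bar J\,F(I)_{j-1}$ for every $j\geq s+1$; and, multiplying by $F(I)_1$ and inducting (again using standard‑gradedness), this in turn follows already from the single equality $F(I)_{s+1}=\bar J\,F(I)_s$. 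Therefore $r_{\bar J}(\mm_{F(I)})$ equals the least $s\geq 0$ with $F(I)_{s+1}=\bar J\,F(I)_s$.

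Next I would unravel the right‑hand side. Writing $\bar x=x^a+\mm I$ and $\bar y=y^b+\mm I$, multiplying a class $u+\mm I^{s}\in F(I)_s$ (with $u\in I^{s}$) by $\bar x$ yields $x^a u+\mm I^{s+1}$, and similarly for $\bar y$. Hence inside $F(I)_{s+1}=I^{s+1}/\mm I^{s+1}$ one has $\bar J\,F(I)_s=(x^aI^{s}+y^bI^{s}+\mm I^{s+1})/\mm I^{s+1}=(JI^{s}+\mm I^{s+1})/\mm I^{s+1}$, so that $F(I)_{s+1}=\bar J\,F(I)_s$ is equivalent to $I^{s+1}=JI^{s}+\mm I^{s+1}$.

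Then I would apply the graded Nakayama lemma to the finitely generated graded $S$‑module $I^{s+1}/JI^{s}$: since $\mm\,(I^{s+1}/JI^{s})=(JI^{s}+\mm I^{s+1})/JI^{s}$, the equality $I^{s+1}=JI^{s}+\mm I^{s+1}$ says precisely that $\mm\,(I^{s+1}/JI^{s})=I^{s+1}/JI^{s}$, forcing $I^{s+1}/JI^{s}=0$, i.e. $I^{s+1}=JI^{s}$; the converse implication is trivial. Chaining the three equivalences, the least $s$ with $F(I)_{s+1}=\bar J\,F(I)_s$ is exactly the least $s$ with $I^{s+1}=JI^{s}$, which is $r_J(I)=r(I)$ by definition. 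Finally $\bar J$ genuinely is a reduction ideal of $\mm_{F(I)}$, because $J$ is a reduction of $I$ (Corollary~\ref{terriblenew}, as $I\in\Ic_{a,b}$), so such an $s$ exists, namely $s=r(I)$.

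This lemma is essentially formal: there is no real obstacle, and the only points that need a little care are the propagation from the degree‑$(s+1)$ identity $F(I)_{s+1}=\bar J\,F(I)_s$ to the full module equality $\mm_{F(I)}^{s+1}=\bar J\,\mm_{F(I)}^{s}$, which uses that $F(I)$ is generated in degree $1$, together with the standard graded Nakayama lemma in the last step.
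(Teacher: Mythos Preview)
Your proof is correct, and the route you take differs from the paper's in the non-trivial direction. Both arguments identify $\mm_{F(I)}^{s+1}=\bar J\,\mm_{F(I)}^{s}$ with the degree-wise equality $I^{s+1}/\mm I^{s+1}=(JI^{s}+\mm I^{s+1})/\mm I^{s+1}$. To pass from this to $I^{s+1}=JI^{s}$ you invoke the graded Nakayama lemma on the module $I^{s+1}/JI^{s}$, which works for any graded ideal and has nothing to do with monomials. The paper instead exploits the monomial structure directly: for a monomial $v\in I^{r+1}\setminus\mm I^{r+1}$ one writes $v+\mm I^{r+1}=uw+\mm I^{r+1}$ with $u\in\{x^{a},y^{b}\}$ and $w\in I^{r}$ a monomial, and then the fact that $\mm I^{r+1}$ is a monomial ideal forces $v=uw\in JI^{r}$. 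Your argument is cleaner and strictly more general (it proves the lemma for arbitrary graded ideals with a graded reduction), while the paper's avoids Nakayama at the cost of relying on the combinatorics of monomial ideals.
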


\begin{proof}
Let $r=r(I)$, then $I^{r+1}=JI^r$. Hence, $I^{r+1}/\mm I^{r+1}=(J+\mm I)/\mm I)(I^r/\mm I^r)$, which implies that   $\mm_{F(I)}^{r+1}=\bar{J}\mm_{F(I)}^r$.  This shows that $r(I)\geq r_{\bar{J}}(\mm_{F(I)})$.

Conversely, let $r= r_{\bar{J}}(\mm_{F(I)})$ and let $v+\mm I^{r+1}\in \mm_{F(I)}^{r+1}$, where $v\in I^{r+1}\setminus \mm I^{r+1}$  is a monomial.  By our assumption,   $v+\mm I^{r+1}=(u+\mm I)(w+\mm I^r)=uw+\mm I^{r+1}$ for some monomial  $w\in I^r$ and $u=x^a$ or $u=y^b$.  Thus,  $v-uw\in \mm I^{r+1}$. Suppose  $v-uw\neq 0$.  Then $v\in \mm I^{r+1}$, since $\mm I^{r+1}$ is a monomial ideal. This is a contradiction. So $v=uw\in JI^r$,  as desired.
\end{proof}

For a monomial ideal $I$ with $G(I)=\{u_1,\ldots,u_m\}$ and a positive integer $c$,  we let  $I^{[c]}$ be the monomial ideal with $G(I^{[c]})=\{u_1^c,\ldots,u_m^c\}$.

\begin{Lemma}
\label{deleted}
Let $I\in \Ic_{a,b}$, and $c>0$ be  an integer. Then $I^{[c]}\in\Ic_{ac,bc}$,  and
\[
r(I)=r(I^{[c]}).
\]
\end{Lemma}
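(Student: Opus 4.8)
The plan is to establish the two assertions separately, the membership $I^{[c]}\in\Ic_{ac,bc}$ being an easy verification and the equality of reduction numbers being the substantive part. First I would note that if $G(I)=\{u_1,\dots,u_m\}$ with $u_j=x^{c_j}y^{d_j}$, then $G(I^{[c]})=\{u_1^c,\dots,u_m^c\}$ with $u_j^c=x^{cc_j}y^{cd_j}$, and with respect to the pair $(ac,bc)$ one has $\nu_{ac,bc}(u_j^c)=(bc\cdot cc_j+ac\cdot cd_j)/(ac\cdot bc)=(bc_j+ad_j)/(ab)=\nu_{a,b}(u_j)\geq 1$. Also $x^{ac},y^{bc}\in G(I^{[c]})$ since $x^a,y^b\in G(I)$. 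Hence $I^{[c]}\in\Ic_{ac,bc}$ by Corollary~\ref{terriblenew} (or directly by the definition of $\Ic_{ac,bc}$), and its minimal monomial reduction is $J^{[c]}=(x^{ac},y^{bc})$.

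For the reduction numbers, the clean approach is to observe that the map on monomials $x^iy^j\mapsto x^{ci}y^{cj}$ sends $G(I)$ bijectively onto $G(I^{[c]})$ in a way compatible with multiplication, hence it induces, for every $k\geq 0$, a bijection $G(I^k)\to G((I^{[c]})^k)$ and a bijection $G(JI^k)\to G(J^{[c]}(I^{[c]})^k)$; more precisely $(I^{[c]})^k=(I^k)^{[c]}$ and $J^{[c]}(I^{[c]})^k=(JI^k)^{[c]}$. The key point is then that for monomial ideals $M\subseteq N$ one has $M=N$ if and only if $M^{[c]}=N^{[c]}$: indeed $G(N)\subseteq M$ iff each generator $x^iy^j$ of $N$ lies in $M$, iff each $x^{ci}y^{cj}$ lies in $M^{[c]}$ (since divisibility of monomials is preserved and reflected under $x^iy^j\mapsto x^{ci}y^{cj}$), iff $G(N^{[c]})\subseteq M^{[c]}$. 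Applying this with $M=JI^k$ and $N=I^{k+1}$ gives that $I^{k+1}=JI^k$ holds if and only if $(I^{[c]})^{k+1}=J^{[c]}(I^{[c]})^k$ holds, and taking the least such $k$ yields $r(I)=r(I^{[c]})$.

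The only step requiring a little care — and the one I would expect to be the main (minor) obstacle — is justifying $(I^{[c]})^k=(I^k)^{[c]}$ and $J^{[c]}(I^{[c]})^k=(JI^k)^{[c]}$ at the level of minimal generators: a product of ideals is generated by products of generators, but one must check that the $c$-th power operation interacts correctly, i.e. that $\prod_s u_{j_s}^c=\bigl(\prod_s u_{j_s}\bigr)^c$, which is immediate since the $u_j$ are monomials and monomials commute, so $(\,\cdot\,)^{[c]}$ is multiplicative on monomial ideals. Once this is in place the argument is purely combinatorial and closes.
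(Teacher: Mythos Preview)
Your proposal is correct and follows essentially the same approach as the paper: the paper's proof is the single sentence ``The proof follows from the fact that for any two monomial ideals $M,L\subset K[x,y]$ one has $(ML)^{[c]}=M^{[c]}L^{[c]}$,'' which is precisely the multiplicativity of $(\,\cdot\,)^{[c]}$ that you isolate as the key step. Your write-up is more explicit than the paper's, in that you also spell out why $M=N\iff M^{[c]}=N^{[c]}$ (via preservation and reflection of divisibility under $x^iy^j\mapsto x^{ci}y^{cj}$), a point the paper leaves implicit but which is needed to get both inequalities between the reduction numbers.
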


\begin{proof}
The proof follows from  the fact  that for any two  monomial ideals $M,L\subset K[x,y]$ one has $(ML)^{[c]}=M^{[c]}L^{[c]}$.
\end{proof}

In \cite{Va} Vasconcelos gives an upper bound for the reduction number of the graded maximal ideal of a standard graded $K$-algebra $A$ in terms of the  arithmetic degree of $A$. In general, if $M$ is a finitely generated graded $A$-module, the {\em arithmetic degree}  of $ M $ is defined to be the number
\[
\arithdeg(M)=\sum_{P\in \Ass(M)}\mult_P(M)e(A/P).
\]
Here $e(M)$  denotes the multiplicity of  $M$, and $\mult_P(M)$ the length of $\Gamma_{PA_P}(M_P)$, where
\[
\Gamma_{PA_P}(M_P)=\{x\in M_P\:\; (P^kA_P)x=0 \text{  for some $k$}\}.
\]
Applied to our situation, the result of Vasconcelos together with Lemma~\ref{stapelbed} gives

\begin{Theorem}
\label{vasco}
Let $I\in \Ic_{a,b}$.  Then
$
r(I)< \arithdeg(F(I)).
$
\end{Theorem}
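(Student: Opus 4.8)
The plan is to read the inequality off from Lemma~\ref{stapelbed} together with Vasconcelos' bound recalled just above, so that what remains is only to check that the hypotheses of that bound are met. First I would record that $F(I)$ is a standard graded $K$-algebra: the identity $I^k/\mm I^k=(I/\mm I)^k$ shows that $F(I)$ is generated over $F(I)_0=K$ by its degree-one part $F(I)_1=I/\mm I$. Since $x^a,y^b\in G(I)$, the ideal $I$ is $\mm$-primary in $K[x,y]$, hence $\dim F(I)$, which is the analytic spread of $I$, equals $\dim K[x,y]=2$.

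Next I would invoke Lemma~\ref{stapelbed}: the ideal $\bar J\subset F(I)$ generated by the two degree-one elements $x^a+\mm I$ and $y^b+\mm I$ is a reduction ideal of the graded maximal ideal $\mm_{F(I)}$, and $r(I)=r_{\bar J}(\mm_{F(I)})$. A reduction of $\mm_{F(I)}$ generated by $\dim F(I)=2$ elements is a minimal reduction, so $\bar J$ is one. Applying Vasconcelos' estimate \cite{Va} to the standard graded algebra $A=F(I)$ with the minimal reduction $\bar J$ of its graded maximal ideal then gives $r_{\bar J}(\mm_{F(I)})<\arithdeg(F(I))$, which by Lemma~\ref{stapelbed} is exactly the assertion $r(I)<\arithdeg(F(I))$. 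If the infinitude of $K$ is needed for Vasconcelos' theorem, one reduces to that case first without loss, since extending $K$ commutes with the formation of the fiber cone and changes neither $\arithdeg(F(I))$ nor the monomial reduction number $r(I)$.

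The subtle point, and the one I would watch most closely, is that Vasconcelos' bound must be applied to the specific reduction $\bar J$ furnished by Lemma~\ref{stapelbed}, and not merely to some generic minimal reduction of $\mm_{F(I)}$. This causes no difficulty: $\bar J$ genuinely is a minimal reduction, as just observed; moreover, even a bound known only for minimal reductions would transfer to $\bar J$, because $\bar J$ contains a minimal reduction $J'$ and $r_{\bar J}(\mm_{F(I)})\le r_{J'}(\mm_{F(I)})$, enlarging the reducing ideal only decreasing the reduction number. The strictness of the inequality is inherited directly from Vasconcelos' estimate.
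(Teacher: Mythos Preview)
Your proposal is correct and follows exactly the route the paper takes: the paper's ``proof'' of Theorem~\ref{vasco} is the single sentence preceding it, namely that Vasconcelos' bound for the reduction number of the graded maximal ideal of a standard graded $K$-algebra, applied to $A=F(I)$ and combined with Lemma~\ref{stapelbed}, yields the inequality. Your write-up simply fleshes out the hypotheses (standard grading of $F(I)$, $\dim F(I)=2$, $\bar J$ a minimal reduction) that the paper leaves implicit; the only superfluous part is the last paragraph's fallback argument about $\bar J$ containing a minimal reduction $J'$, which is moot once you have already observed that $\bar J$ itself is minimal.
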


\begin{Remark}
\label{associativity}
{\em It follows from the associativity formula for multiplicities (cf. \cite[Corollary 4.7.8]{BH}) that $e(A)=\arithdeg(A)$,  if $\dim A/P=\dim A$ for all $P\in \Ass(A)$. This is for example the case if $A$ is Cohen--Macaulay or $A$ is a domain.}
\end{Remark}

\medskip
We call a graded  ideal  $I\subset K[x,y]$  {\em quasi-equigenerated},  if there exists a non-standard grading of $K[x,y]$ such that $I$ is generated  by homogeneous polynomials with respect to this grading. Thus any quasi-equigenerated monomial ideal, as defined before, is also a quasi-equigenerated graded ideal.

In the case that $I$ is a quasi-equigenerated graded ideal, say, $I=(f_1,\ldots,f_m)$ with $\deg f_i=d$ for all $i$ for suitable degrees   of the variables, then  one has
\[
F(I)\iso K[f_1,\ldots,f_m]\subset S.
\]
In particular, if  $I=I_A\subset K[x,y]$ with $\{0,g\}\subseteq A\subseteq [0,g]$, then  $$F(I)\iso K[\{x^{i\frac{a}{g}}y^{ b-i\frac{b}{g}}\:\; i\in A\}].$$


\section{An upper bound for the  monomial  reduction number of quasi-equigenerated  monomial ideals in $K[x,y]$}

Let $A$ be a finite number of integers. We denote by $\gcd(A)$ the greatest common divisor of the integers belonging to $A$. As a consequence of Theorem~\ref{vasco} we obtain

\begin{Theorem}
\label{cold}
Let $I\in\Ic_{a,b}^1$ with $G(I)=\{x^{i\frac{a}{g}}y^{ b-i\frac{b}{g}}\:\; i\in A\}$, where $\{0,g\}\subset A\subseteq [0,g]$ and  $g=\gcd(a,b)$.  Then $e(F(I))=g/\gcd(A)$. In particular,  $r(I)<g/\gcd(A)$.
\end{Theorem}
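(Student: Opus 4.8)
The plan is to extract $e(F(I))$ from the Hilbert function of the fiber cone and then feed the result into Theorem~\ref{vasco}. Write $a'=a/g$, $b'=b/g$, so $\gcd(a',b')=1$ and the generator of $I=I_A$ indexed by $i\in A$ is $u_i=x^{ia'}y^{b'(g-i)}$ (with $u_0=y^b$, $u_g=x^a$). By the isomorphism $F(I)\iso K[u_i\:\; i\in A]$ recorded at the end of Section~\ref{preliminaries}, $F(I)$ is a standard graded $K$-subalgebra of $K[x,y]$ with the $u_i$ in degree $1$, and a product $u_{i_1}\cdots u_{i_k}$ equals $x^{a's}y^{b'(kg-s)}$ where $s=i_1+\cdots+i_k$ ranges exactly over the $k$-fold sumset $kA$. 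Distinct values of $s$ give distinct monomials, so $\{x^{a's}y^{b'(kg-s)}\:\; s\in kA\}$ is a $K$-basis of $F(I)_k$ and
\[
\dim_K F(I)_k=|kA|\quad\text{for all }k\geq 1.
\]
Hence everything reduces to the asymptotic estimate $|kA|=(g/\gcd(A))\,k+O(1)$: this forces the Hilbert polynomial of $F(I)$ to be linear with leading coefficient $g/\gcd(A)$, so $\dim F(I)=2$ and, $F(I)$ being a two-dimensional standard graded algebra, $e(F(I))$ equals that coefficient. (Alternatively, $\dim F(I)=2$ because $F(I)$ contains the polynomial ring $K[x^a,y^b]$ and, by Lemma~\ref{stapelbed}, is module-finite over it.)

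To prove the estimate I would first reduce to the coprime case. Set $d=\gcd(A)$ and $A'=d^{-1}A=\{i/d\:\; i\in A\}$; then $kA=d\cdot(kA')$, so $|kA|=|kA'|$, while $A'\subseteq[0,m]$ with $0,m\in A'$ and $\gcd(A')=1$, where $m=g/d=g/\gcd(A)$. It is enough to show $|kA'|=mk+O(1)$. Since $kA'\subseteq[0,mk]$ we have $|kA'|\leq mk+1$, so the real point is that only a bounded number of integers in $[0,mk]$ fail to lie in $kA'$.

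This eventual linearity of $|kA'|$ is the step I expect to require genuine work. The argument I have in mind uses the numerical semigroup $\langle A'\rangle$ generated by $A'$; since $\gcd(A')=1$ it has a finite Frobenius number $\gamma$. Near the bottom of $[0,mk]$: for $n\leq k$ one has $n\in kA'$ if and only if $n\in\langle A'\rangle$, because a representation $n=\sum_j i_j$ with the $i_j\in A'\setminus\{0\}$ uses at most $n\leq k$ summands and can be padded with copies of $0\in A'$; thus at most $\gamma+1$ integers below $k$ are missing from $kA'$. Replacing $A'$ by $m-A'$ — which again contains $0$ and $m$, has gcd $1$, and satisfies $k(m-A')=mk-kA'$ — gives the same control at the top of $[0,mk]$. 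For the middle, write $n=qm+r$ with $0\leq r<m$; once $n$ is large enough that $q>\gamma$, the integer $\gamma m+r$ lies in $\langle A'\rangle$ and is therefore a sum of at most $(\gamma+1)m$ elements of $A'$, and adjoining $q-\gamma$ copies of $m\in A'$ writes $n$ as a sum of at most $q-\gamma+(\gamma+1)m$ elements of $A'$; when moreover $n\leq mk-(\gamma+1)m^2$ this count is at most $k$, so $n\in kA'$ after padding with zeros. Combining the three ranges, there is a constant $C=C(A')$ with $[C,mk-C]\subseteq kA'$ for all $k\gg 0$, whence $mk+1-2C\leq|kA'|\leq mk+1$ and $\lim_{k\to\infty}|kA'|/k=m$.

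Putting the pieces together gives $e(F(I))=g/\gcd(A)$. Finally, $F(I)$, being a subalgebra of the domain $K[x,y]$, is itself a domain, so $\arithdeg(F(I))=e(F(I))$ by Remark~\ref{associativity}; Theorem~\ref{vasco} then yields $r(I)<\arithdeg(F(I))=g/\gcd(A)$, as claimed. The only substantial ingredient is the sumset estimate of the third paragraph; the remainder is bookkeeping with the description of $F(I)$ and with multiplicities of two-dimensional standard graded algebras.
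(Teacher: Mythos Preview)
Your argument is correct, and the final step---identifying $\arithdeg(F(I))$ with $e(F(I))$ via Remark~\ref{associativity} and then invoking Theorem~\ref{vasco}---matches the paper exactly. The computation of $e(F(I))$, however, proceeds quite differently. The paper never touches the Hilbert function: after the same reduction to $\gcd(A)=1$, it argues that $Q\tensor_{F(J)}F(I_A)=Q\tensor_{F(J)}F(I_{[0,g]})$ for \emph{every} such $A$ (where $Q$ is the quotient field of $F(J)$), so all these fiber cones share the same multiplicity, and then computes that common value on the single ideal $I_{\{0,1,g\}}$, whose fiber cone is the hypersurface $K[z_1,z_2,z_3]/(z_2^g-z_1z_3^{g-1})$ of multiplicity $g$.

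Your route is more elementary in that it avoids the generic-rank trick and the explicit presentation, replacing them with the concrete identity $\dim_K F(I)_k=|kA|$ and the additive-combinatorics estimate $|kA'|=mk+O(1)$ via Frobenius numbers. The price is that you must actually carry out that sumset analysis, whereas the paper's localisation argument sidesteps any asymptotics. Conversely, your approach yields more: it pins down the Hilbert function itself, not just its leading term, and makes transparent why only $g/\gcd(A)$ (rather than $a$, $b$, or $|A|$) governs the answer. Either method would be acceptable; they simply trade algebraic machinery for combinatorial work.
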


\begin{proof}
We first show that  $e(F(I))=g/\gcd(A)$.  Let  $\gcd(A)=t$. If $t>1$, then $I=(I')^{[t]}$, where $G(I')=\{x^{i'(a'/g')}
y^{(b')-i'(b'/g')}\:\; i'\in A'\}$ where $A'=\{i/t\:\; i\in A\}$, $a'=a/t$, $b'=b/t$ and $g'=g/t$.

Because $F(I)\iso F(I')$, we get $e(F(I))=e(F(I'))$ and   $\gcd(A')=1$. Suppose we have the desired result for $I'$. Then  $e(F(I'))=\gcd(a',b')$, and hence
\[
e(F(I))=e(F(I'))=\gcd(a',b')=\gcd(a,b)/\gcd(A).
\]
Thus  we may assume from the very  beginning that  $\gcd(A)=1$, and have then to show that $e(F(I))=g$.

Let $Q=Q(F(J))$ be the quotient field of $F(J)$. Since $J$ is a reduction ideal of $I$, it follows that $F(I)$ is a finitely generated $F(J)$-module. Therefore, $Q\tensor_{F(J)}F(I)\iso Q^e$, where $e=e(F(I))$, see for example \cite[Corollary 4.7.9]{BH}.

Let $L=(u_i\:\; i\in [0,g])$, where $u_i=x^{i\frac{a}{g}}y ^{b-i\frac{b}{g}}$. We claim that
\begin{eqnarray}
\label{early}
Q\tensor_{F(J)}F(I)= Q\tensor_{F(J)}F(L).
\end{eqnarray}
The inclusion $Q\tensor_{F(J)}F(I)\subseteq Q\tensor_{F(J)}F(L)$ is obvious, because $F(I)\subseteq F(L)$. For the other inclusion it is enough to show  that $u_j\in Q\tensor_{F(J)}F(I)$ for all $j\in [1,g-1]$.

Since $\gcd(A)=1$, there exist non-negative integers $r_i$ such that $\sum_{i\in A}r_i i\equiv  j(\mod g)$. Therefore, $\sum_{i\in A}r_ii=j+rg$ for some non-negative integer $r$.  Let $\bar{r}=\sum_{i\in A}r_i$. Then
\begin{eqnarray*}
\prod_{i\in A}u_i^{r_i}&=& x^{(\sum_{i\in A}r_ii)\frac{a}{g}}y^{\sum_{i\in A}r_i(b-i\frac{b}{g})}=x^{(j+rg)\frac{a}{g}}y^{\bar{r}b-(j+rg)\frac{b}{g}}
=u_j\frac{u_g^ru_0^{\bar{r}-1}}{u_0^r}.
\end{eqnarray*}
This show that $F(L)\subset Q\tensor_{F(J)}F(I)$, and hence $Q\tensor_{F(J)}F(L)\subseteq Q\tensor_{F(J)}F(I)$. This proves the claim.

It follows from (\ref{early}) that $e(F(I))=e(F(L))$. Let $I'$ be the monomial ideal with $G(I')=(x^a, x^{\frac{a}{g}} y^{b-\frac{b}{g}}, y^b)$.   Then, $I'=I_{A'}$ with $A'=\{0,1,g\}$. Since $\gcd(A')=1$, we get as before,  $e(F(I'))=e(F(L))$, and hence $e(F(I))=e(F(I'))$. Let $\varphi\: K[z_1,z_2,z_3]\to F(I')=K[x^a, x^{\frac{a}{g}} y^{b-\frac{b}{g}}, y^b]$ be the $K$-algebra homomorphism with $z_1\mapsto x^a$, $z_2\mapsto  x^{\frac{a}{g}} y^{b-\frac{b}{g}}$ and   $z_3\mapsto  y^b$. Then $\Ker(\varphi)=(z_2^g-z_1z_3^{g-1})$, so that  $F(I')\iso K[z_1,z_2,z_3]/(z_2^g-z_1z_3^{g-1})$. It follows that $e(F(I'))=g$, as desired.

Now we  apply Theorem~\ref{vasco}, and  obtain that $r(I)<\arithdeg(F(I))$. Since \\
$I\in\Ic_{a,b}^1$  it follows that $F(I)$ is a domain. Therefore, $\Ass(F(I))=\{(0)\}$,  and hence
\[
\arithdeg(F(I))=\mult_{(0)}(F(I))e(F(I))=e(F(I)).
\]
\end{proof}

The upper bound for the reduction number of a quasi-equigenerated  monomial ideal as given in Theorem~\ref{cold} can be improved by using a strong  result of Gruson, Lazarsfeld and Peskine  \cite{GLP}.

\begin{Theorem}
\label{GLP}
Let $I\subset K[x,y]$ be a quasi-equigenerated  monomial ideal with $G(I)=\{x^{i\frac{a}{g}}y^{ b-i\frac{b}{g}}\:\; i\in A\}$, where $\{0,g\}\subset A\subseteq [0,g]$ with  $g=\gcd(a,b)$. Then
\[
r(I)\leq \frac{g}{\gcd(A)}-|A|+2.
\]
\end{Theorem}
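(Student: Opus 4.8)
The plan is to bound the Castelnuovo--Mumford regularity of the fiber cone $F(I)$ by means of the theorem of Gruson, Lazarsfeld and Peskine \cite{GLP}, and then to use that $r(I)$ does not exceed this regularity. First I would reduce to the case $\gcd(A)=1$, exactly as at the beginning of the proof of Theorem~\ref{cold}: if $t=\gcd(A)>1$ then $I=(I')^{[t]}$ for an ideal $I'\in\Ic_{a/t,\,b/t}^1$ whose index set is $A'=\{i/t\:\; i\in A\}$, and $\gcd(A')=1$, $|A'|=|A|$, $g'=\gcd(a/t,b/t)=g/t$; by Lemma~\ref{deleted} $r(I)=r(I')$, while $g'/\gcd(A')-|A'|+2$ equals $g/\gcd(A)-|A|+2$. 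Hence it suffices to show $r(I)\le g-|A|+2$ when $\gcd(A)=1$, and here one may assume $g\ge 2$ (so $|A|\ge 3$), the case $g=1$ (where $I=(x^a,y^b)$ and $r(I)=0$) being trivial.

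Now regrade $K[x,y]$ so that the generators of $I$ have degree $1$. Then $F(I)\iso K[\{x^{ia/g}y^{b-ib/g}\:\; i\in A\}]$ is a standard graded $K$-algebra which, being a subring of $K[x,y]$, is a domain of dimension $2$. Writing $F(I)=K[z_1,\dots,z_{|A|}]/P$ with $z_i$ the classes of the minimal generators, the prime $P$ is saturated (it is not $\mm$) and contains no linear form (the $z_i$ are linearly independent in $F(I)_1$), so $C=\operatorname{Proj}F(I)\subset\PP^{|A|-1}$ is an integral, non-degenerate curve, of degree $e(F(I))=g$ by Theorem~\ref{cold}. (If $K$ is not algebraically closed, extend scalars to $\bar K$ first: this keeps $F(I)$ a domain and changes neither its regularity nor the reduction number.) The theorem of Gruson, Lazarsfeld and Peskine then asserts that $\mathcal I_C$ is $\bigl(g-(|A|-1)+2\bigr)$-regular, i.e.\ $\reg(\mathcal I_C)\le g-|A|+3$; together with the standard relation $\reg(F(I))=\reg(\mathcal I_C)-1$ for the saturated homogeneous coordinate ring of $C$, this gives $\reg(F(I))\le g-|A|+2$.

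It remains to prove $r(I)\le\reg(F(I))$. By Lemma~\ref{stapelbed}, $r(I)=r_{\bar J}(\mm_{F(I)})$ with $\bar J=(\ell_1,\ell_2)$, $\ell_1=x^a+\mm I$, $\ell_2=y^b+\mm I$, a minimal reduction of $\mm_{F(I)}$ generated by two linearly independent linear forms, and a short degree count gives $r(I)=\max\{n\:\; (F(I)/\bar JF(I))_n\neq 0\}$. Since $F(I)$ is a domain, $\ell_1$ is a non-zerodivisor, so $\reg(F(I)/\ell_1F(I))\le\reg(F(I))$; writing $B=F(I)/\ell_1F(I)$ (one-dimensional) and $N=H^0_{\mm}(B)$ its finite-length submodule, the quotient $\bar B=B/N$ is one-dimensional Cohen--Macaulay with $\reg\bar B\le\reg B$, the image $\bar\ell_2$ of $\ell_2$ is a non-zerodivisor on $\bar B$, and from $0\to N\to B\to\bar B\to 0$ one reads off $\max\{n\:\; (B/\bar\ell_2 B)_n\neq 0\}\le\reg B\le\reg(F(I))$. (Equivalently, invoke the known inequality $r_J(\mm_A)\le\reg(A)$, valid for any minimal reduction $J$ of a standard graded algebra $A$ with $\depth A\ge\dim A-1$, applied to the domain $A=F(I)$.) Combining this with the previous paragraph gives $r(I)\le g-|A|+2$, hence $r(I)\le g/\gcd(A)-|A|+2$ in general. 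I expect this last comparison to be the main obstacle: $F(I)$ need not be Cohen--Macaulay --- projections of rational normal curves are not always arithmetically Cohen--Macaulay --- so one cannot simply cite the equality $r_J(\mm_A)=\reg(A)$ valid for Cohen--Macaulay $A$, and one must instead exploit the domain property to control the $\mm$-torsion of $F(I)/\ell_1F(I)$ before cutting by the second parameter.
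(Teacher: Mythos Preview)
Your proof is correct and follows essentially the same route as the paper's: bound $\reg(F(I))$ via Gruson--Lazarsfeld--Peskine applied to the projective curve $\operatorname{Proj}F(I)\subset\PP^{|A|-1}$, then compare $r(I)$ with $\reg(F(I))$ using Lemma~\ref{stapelbed}. The only difference is that where you work out the inequality $r_{\bar J}(\mm_{F(I)})\le\reg(F(I))$ by hand (exploiting that the $2$-dimensional domain $F(I)$ has positive depth), the paper simply cites Trung~\cite{Tr} for this step---so the ``main obstacle'' you anticipated is in fact a known result, and your direct argument, while sound, is not needed.
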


\begin{proof}
We may assume that $K=\CC$, since for monomial ideals the reduction number is independent of the base field. Then  the fiber cone of $I$ may be viewed as the homogeneous coordinate ring of an irreducible curve in $\PP^{|A|-1}$.  By \cite[Theorem 1.1]{GLP}, it follows that $\reg(F(I))\leq e(F(I))-|A|+2$. In the proof of Theorem~\ref{cold} we have seen that $e(F(I))=\frac{g}{\gcd(A)}$.  By a result of Trung~\cite{Tr},  one has $r_{\bar{J}}(\mm_{F(I)})\leq \reg F(I)$. Thus  the desired result follows from Lemma~\ref{stapelbed}.
\end{proof}

\section{On the values for the reduction number for quasi-equigenerated monomial ideals $I\subset K[x, y]$}
\label{value}

In this section we show for any   $0\leq j<\gcd(a,b)-1$, there exists  a quasi-equigenerated monomial ideal $I\subset K[x,y]$ with $x^a,y^b\in G(I)$ and $r(I)=j$. We also classify the quasi-equigenerated monomial ideals with smallest positive reduction number, namely reduction number 1,  and those with maximal reduction number.

\begin{Theorem}
\label{Somayeh}
Let  $I\in \Ic_{a,b}^1$. Let $g=\gcd(a,b)$ and $j\in [1,g-1]$. Then
$r(I)=j$, where $I=(x^{i\frac{a}{g}}y^{ b-i\frac{b}{g}}\:\;i\in A)$ with $A=[0,1] \union [j+1,g]$.
\end{Theorem}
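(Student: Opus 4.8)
The plan is to translate the statement about $r(I)$ into a purely combinatorial statement about the sumsets $kA$ of the numerical set $A=[0,1]\cup[j+1,g]$, and then carry out a short direct computation.

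\emph{The dictionary.} Writing $u_i=x^{i\frac ag}y^{b-i\frac bg}$, one has $J=(x^a,y^b)=I_{\{0,g\}}$, and by the identities $I_A^k=I_{kA}$ and $I_SI_T=I_{S+T}$ recorded above, $I^{r+1}=I_{(r+1)A}$ and $JI^{r}=I_{\{0,g\}+rA}$, both viewed inside the ``level $r+1$'' situation, where $I_S$ has generators $x^{s\frac ag}y^{(r+1)b-s\frac bg}$ for $s\in S\subseteq[0,(r+1)g]$. The key remark is that these monomials form an antichain under divisibility (as $s$ grows the $x$-exponent grows and the $y$-exponent drops), so they are precisely the minimal monomial generators of $I_S$, and hence $I_S=I_T$ if and only if $S=T$. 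Since $0,g\in A$ we always have $\{0,g\}+rA\subseteq(r+1)A$, so this yields
\[
r(I)=\min\{\,r\ge 1:\ (r+1)A=\{0,g\}+rA\,\}=\min\{\,r\ge1:\ [0,(r+1)g]\subseteq\{0,g\}+rA\,\},
\]
the last equality because $(r+1)A\subseteq[0,(r+1)g]$ trivially while $\{0,g\}+rA\subseteq(r+1)A$; note also $r(I)\ge1$ since $A\neq\{0,g\}$.

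\emph{Lower bound $r(I)\ge j$.} The set $\{r:I^{r+1}=JI^r\}$ is upward closed (multiply $I^{r+1}=JI^r$ by $I$), so it suffices to check $I^j\neq JI^{j-1}$, i.e.\ $jA\neq\{0,g\}+(j-1)A$. I would use the single exponent $j$: it lies in $jA$ (add $j$ copies of $1\in A$), but a sum of $j-1$ elements of $A$ is $\le j-1$ when all summands lie in $\{0,1\}$ and is $\ge j+1$ when some summand lies in $[j+1,g]$, so $j\notin(j-1)A$; moreover $j-g<0$, so $j-g\notin(j-1)A$. Hence $j\notin\{0,g\}+(j-1)A$.

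\emph{Upper bound $r(I)\le j$.} I would prove $[0,(j+1)g]\subseteq\{0,g\}+jA$. Splitting a representation of an element of $kA$ into its zeros, its ones, and its elements of the interval $[j+1,g]$ gives
\[
kA=[0,k]\cup\bigcup_{m=1}^{k}\bigl[\,m(j+1),\ m(g-1)+k\,\bigr].
\]
If $j\le g-2$, then for $k=j$ consecutive intervals on the right overlap or are adjacent (the relevant estimate $m(j+2-g)\le k-j$ holds because $j+2-g\le0\le k-j$), so $jA=[0,jg]$; hence $\{0,g\}+jA=[0,jg]\cup[g,(j+1)g]=[0,(j+1)g]$ since $g\le jg$. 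If $j=g-1$, then $A=\{0,1,g\}$, so $kA=\{p+qg:\ p,q\ge0,\ p+q\le k\}$, and a direct check gives $gA\subseteq\{0,g\}+(g-1)A$, the only potential obstruction being $g$ itself, which already lies in $(g-1)A$ because $g=0+1\cdot g$ and $g\ge2$. In both regimes $(j+1)A=\{0,g\}+jA$, i.e.\ $I^{j+1}=JI^j$, so $r(I)\le j$; combined with the lower bound, $r(I)=j$.

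\emph{Main obstacle.} The essential step is the reduction $r(I)=\min\{r:(r+1)A=\{0,g\}+rA\}$, resting on the antichain observation that distinct monomials on a fixed line $bx+ay=c$ are incomparable; after that the only real work is the sumset identity for $kA$ and keeping the generic case $j\le g-2$ (where $jA$ is a full interval) separate from the boundary case $j=g-1$ (where $A=\{0,1,g\}$ and $jA$ is not an interval). The lower bound, by contrast, is immediate once one spots the witness $j\in jA\setminus(\{0,g\}+(j-1)A)$.
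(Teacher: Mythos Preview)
Your argument is correct. The lower bound via the witness $j\in jA\setminus(\{0,g\}+(j-1)A)$ is exactly the paper's proof of part~(ii). For the upper bound, however, you take a genuinely different route: the paper shows $(j+1)A\subseteq\{0,g\}+jA$ by fixing $r_1,\dots,r_{j+1}\in A$ and running through several cases (some $r_i=1$, all $r_i=1$, none $r_i=1$, with further subcases on whether $r_k+r_\ell$ lands in $[g+1,g+j]$), whereas you compute the sumset $kA=[0,k]\cup\bigcup_{m=1}^k[m(j+1),m(g-1)+k]$ directly and observe that for $k=j\le g-2$ the intervals coalesce into $[0,jg]$, so that $\{0,g\}+jA=[0,(j+1)g]$; you then dispatch the boundary case $j=g-1$ separately. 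Your approach trades the paper's uniform-but-lengthy case analysis for a cleaner structural picture at the cost of a regime split; it also makes explicit (via the antichain observation) the reduction to the combinatorial identity $(r+1)A=\{0,g\}+rA$, which the paper simply asserts as ``equivalently''. Both arguments are of comparable length, but yours is more transparent about why the interval structure of $A$ drives the answer.
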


\begin{proof}
 For the proof of the theorem we have to show that
$I^{j+1}=JI^j$ and $I^{j}\neq JI^{j-1}$, where $J=(x^a,y^b)$. Equivalently,
\begin{enumerate}
\item[(i)] $(j+1)A=\{0,g\}+ jA$, and
\item[(ii)] $jA\neq \{0,g\}+ (j-1)A$.
\end{enumerate}

Proof of (i): It is obvious  that $\{0,g\}+ jA\subseteq(j+1)A$. So it is enough to show that $(j+1)A\subseteq \{0,g\}+ jA$. In other words, we have to show: given $r_i\in A$ for $i=1,\ldots,j+1$, then there exist $r_1', \ldots r_j'\in A$ such that
\[
r:=r_1+r_2+\cdots +r_{j+1}=r_1'+\cdots +r_j'\quad \text{or}\quad r_1+r_2+\cdots +r_{j+1}=g+r_1'+\cdots +r_j'.
\]
If for some $i$ we have $r_i=0$ or $r_i=g$, then the assertion is trivial. Hence for the rest of the proof we may assume that $r_i\neq 0,g$ for all $i$.

\medskip
We consider different cases.

\medskip
\noindent
{\sc  Case 1}: There exist $k\neq \ell$ with  $r_k=1$ and $r_\ell\neq 1$. We may assume $r_1=1$ and $r_2\neq 1$. Then $j+1\leq r_2< g$, and it follows that
\[
r=1+r_2+\cdots +r_{j+1}=(1+r_2)+r_3+\cdots +r_{j+1}.
\]
Since $r_2<g$, it follows $r_2+1\in A$, so that $r$ is the sum of $j$  elements  belonging to~$A$.

\medskip
\noindent
{\sc  Case 2}: $r_i=1$ for all $i$. Then
\[
r=j+1= (j+1)+ \underbrace{0+\cdots+0}_\text{$j-1$}.
\]

\medskip
\noindent
{\sc Case 3}:  $r_i\neq 1$ for all $i$. Then $j+1\leq r_i<g$ for all $i$. Suppose  there exist $k\neq \ell$ with  $r_k+r_\ell\leq g$, say $k=1$ and $\ell=2$, then
\[
r= (r_1+r_2)+r_3+\cdots +r_{j+1},
\]
and we are done because $r_1+r_2\in A$. So in the sequel we may assume that $r_k+r_\ell> g$ for all $k\neq \ell$.

\medskip
\noindent
{\sc Subcase 3.1}: We assume $r_k+r_\ell \not\in[g+1,g+j]$ for some $k\neq \ell$. Then $(r_k+r_\ell)-g\in A$. We may assume $k=1$ and $\ell=2$, and then we get
\[
r= g+((r_1+r_2)-g)+r_3+\cdots +r_{j+1}.
\]
Since $(r_1+r_2)-g\in A$, we see that $r\in g+jA$.

\medskip
\noindent
{\sc Subcase 3.2}: We assume $r_k+r_\ell \in[g+1,g+j]$ for all $k\neq \ell$. If there exists $i$ such that $r_i\leq g-2$, we assume that $i=3$, and then
\[
r=(r_1+r_2-j)+(r_3+2)+(r_4+1)+\cdots +(r_{j+1}+1).
\]
Since all summands on the right hand side belong to $A$, we are done in this case.

If there exists no $i$ such that $r_i\leq g-2$, then $r_i=g-1$ for all $i$, and hence $2g-2=r_1+r_2 \leq g+j$. This implies that $g\leq j+2$. On the other hand $j+1\leq r_1=g-1$. Therefore, $g=j+2$, and
\[
r=(g-1)^2 =g + \underbrace{g+\cdots+g}_\text{$g-3$} +1.
\]
Since $g-3=j-1$,  it follows that $r\in g+jA$, and the  proof of (i) is completed.

\medskip
Proof of (ii): We claim that $j\in jA\setminus (\{0,g\}+(j-1)A)$. Indeed, $j\in jA$, because $[0,j]\subset jA$.
On the other hand, since $(j-1)A)=\Union_{i=0}^{j-1}(j-1-i)[0,1]+i[j+1,g]$, it follows  $\{0,g\}+(j-1)A\subseteq [0,j-1]\union [j+1,jg]$. Therefore,  $j\not\in  \{0,g\}+(j-1)A$.
\end{proof}

The next result classifies all quasi-equigenerated monomial ideal with reduction number 1.

\begin{Theorem}
\label{redone}
Let $I=I_A\in\Ic_{a,b}^1$ with $g=\gcd(a,b)>1$. Then
\[
r(I)=1\quad\text{if and only if}\quad \text{$A=\{0,d,2d,\ldots,(g/d)d= g\}$ and  $d=\gcd(A)\neq g.$}
\]
\end{Theorem}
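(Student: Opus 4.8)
The plan is to reduce the assertion to a statement about subsemigroups of the cyclic group $\ZZ/g\ZZ$. Write $A=\{0=a_1<a_2<\dots<a_n=g\}$, and recall from Section~\ref{preliminaries} that $I_A^k=I_{kA}$ and $JI_A=I_{\{0,g\}+A}$. Since $r(I)=0$ precisely when $I=J$, i.e. when $A=\{0,g\}$, the first step is the reformulation
\[
r(I)=1\iff 2A=\{0,g\}+A\ \text{ and }\ A\neq\{0,g\}.
\]
Furthermore $\{0,g\}+A=A\cup(A+g)$, and since $0,g\in A$ this set is always contained in $2A=A+A$; so $2A=\{0,g\}+A$ is equivalent to the inclusion $2A\subseteq A\cup(A+g)$.

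The key step is to read this inclusion modulo $g$. Let $\bar A\subseteq\ZZ/g\ZZ$ be the image of $A$ under reduction mod $g$; note $\bar 0\in\bar A$, and $\bar A\neq\{\bar 0\}$ exactly when $A\neq\{0,g\}$. I would prove that
\[
2A\subseteq A\cup(A+g)\iff \bar A \text{ is closed under addition in }\ZZ/g\ZZ.
\]
The implication ``$\Rightarrow$'' is immediate: for $a,a'\in A$ the sum $a+a'$ lies in $A$ or in $A+g$, and in both cases $\overline{a+a'}=\bar a+\bar a'\in\bar A$. For ``$\Leftarrow$'': given $a,a'\in A\subseteq[0,g]$ we have $a+a'\in[0,2g]$, and since $\overline{a+a'}\in\bar A$ there is $a''\in A\subseteq[0,g]$ with $a+a'\equiv a''\pmod g$; then $a+a'-a''$ is a multiple of $g$ in the interval $[-g,2g]$, so $a+a'-a''\in\{-g,0,g,2g\}$, and a short check of these four cases (using $0,g\in A$ to handle the values $\pm g$ and $2g$) shows $a+a'\in A\cup(A+g)$.

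The remainder is elementary group theory. A nonempty subset of a finite group closed under its operation is a subgroup (every element has finite order, so its inverse is one of its powers); hence $\bar A$ closed under addition means $\bar A$ is a subgroup of $\ZZ/g\ZZ$, and every such subgroup equals $d(\ZZ/g\ZZ)$ for the unique divisor $d$ of $g$ with $|\bar A|=g/d$. Lifting this back to $[0,g]$ gives $A=\{0,d,2d,\dots,g\}$, whence $\gcd(A)=d$, and $d\neq g$ is equivalent to $A\neq\{0,g\}$. Combining with the reformulation of the first paragraph yields the theorem in both directions at once: $r(I)=1$ if and only if $\bar A$ is a nontrivial subgroup of $\ZZ/g\ZZ$, if and only if $A=\{0,d,2d,\dots,g\}$ with $d=\gcd(A)\neq g$.

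I do not anticipate a real obstacle. The only point requiring genuine care is the ``$\Leftarrow$'' half of the modular equivalence, namely verifying that the boundary values $a+a'-a''\in\{-g,g,2g\}$ still force $a+a'\in A\cup(A+g)$; this goes through only because both $0$ and $g$ belong to $A$. Everything else — the passage $r(I)=1\iff 2A=\{0,g\}+A$ and $A\ne\{0,g\}$, the ``$\Rightarrow$'' direction of the modular equivalence, and the classification of subgroups of $\ZZ/g\ZZ$ — is routine.
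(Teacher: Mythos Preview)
Your argument is correct. The central insight---that $r(I)=1$ forces the image $\bar A\subseteq\ZZ/g\ZZ$ to be a subgroup---is the same one the paper uses, but the organization differs. The paper first reduces to $\gcd(A)=1$ via Lemma~\ref{deleted}, then treats the two implications separately: for the ``if'' direction it writes $I_{[0,g]}=(x^{a/g},y^{b/g})^g$ and checks the identity $(x^{a/g},y^{b/g})^{2g}=(x^a,y^b)(x^{a/g},y^{b/g})^g$ directly, while for ``only if'' it uses the subgroup observation together with $\gcd(A)=1$ to conclude $\bar A=\ZZ/g\ZZ$. You instead prove the full equivalence $2A\subseteq A\cup(A+g)\iff\bar A$ is a subgroup, which handles both directions at once and never requires the reduction to $\gcd(A)=1$ or the explicit power computation. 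Your route is more symmetric and slightly more economical; the paper's route has the advantage that each direction is a one-line observation once the reduction is in place. The only place your argument needs the care you flag is the boundary analysis for $a+a'-a''\in\{-g,2g\}$, and that goes through exactly because $0,g\in A$, as you note.
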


\begin{proof}
Let $\gcd(A)=d$. Then $I_A=(I_{A'})^{[d]}$ with $\gcd(A')=1$, $I_{A'}\in \Ic_{a',b'}^1$ where $a'=a/d$, $b'=b/d$ and $g'=\gcd(a',b')=g/d$. Since $r(I)=r(I_{A'})$ by Lemma~\ref{deleted}, it suffices to show that $r(I_{A'})=1$ if and only if $A'=[0,g']$. Hence we may as well assume that $\gcd(A)=1$ and we show that $r(I)=1$ if and only if $A=[0,g]$.

Suppose first that  $A=[0,g].$ Then $I=(x^{ia/g}y^{b-ib/g})_{i=0, \ldots, g}=(x^{a/g},y^{b/g})^g$. Note that $$(x^{a/g},y^{b/g})^{2g}=(x^{a},y^{b})(x^{a/g},y^{b/g})^g.$$
 This shows that $r(I)=1$, since $g>1$.

Conversely, suppose  $r(I)=1$. Then $I^2=JI$, where $I=(x^{ia/g}y^{b-ib/g})_{i\in A}$ and $J=(x^{a},y^{b})$. This implies that $2A=\{0, g\}+A$. This last equation implies that $ 2A\equiv A (\mod g)$. Therefore,
\[
\bar{A}=\{i+g\ZZ \:\; i\in A\}
\]
is a subgroup of $\ZZ/(g)$. Since $\gcd(A)=1$, there exist $z_i\in \ZZ$ such that $1=\sum_{i\in A}z_ii$. Hence, $1+g\ZZ=\sum_{i\in A}z_i(i+g\ZZ)\in \bar {A}$, and so $\bar{A}=\ZZ/(g)$.  This implies that $A=[0,g].$
\end{proof}

Now we classify the quasi-equigenerated monomial ideals $I\subset K[x,y]$ with maximal reduction number.
First we show

\begin{Proposition}
\label{masiproves}
Let $a,b$ be positive integers, and let  $I=(x^a, x^{e(a/g)}y^{b-e(b/g)}, y^b)$,  where   $g=\gcd(a,b)$ and $e\in[1, g-1]$. Then $r(I)=(g/\gcd(e,g))-1$. \end{Proposition}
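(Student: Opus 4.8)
## Proof plan for Proposition~\ref{masiproves}

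The plan is to reduce to the case $\gcd(e,g)=1$ and then compute the reduction number combinatorially. By Lemma~\ref{deleted} (applied to clear common factors) and the observation that $I=I_A$ with $A=\{0,e,g\}$, setting $t=\gcd(e,g)$ one has $I=(I_{A'})^{[t]}$ where $A'=\{0,e/t,g/t\}$ and $\gcd(A')=1$; since $r(I)=r(I_{A'})$, it suffices to treat the case $t=1$ and prove $r(I)=g-1$. So from now on I would assume $\gcd(e,g)=1$ and $A=\{0,e,g\}$. The task is then purely combinatorial via the translation already set up in the excerpt: $r(I)$ is the least $r$ with $(r+1)A=\{0,g\}+rA$, equivalently $I^{r+1}=JI^r$, while $r(I)\ge r$ iff $rA\ne\{0,g\}+(r-1)A$.

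The first half is to show $r(I)\le g-1$, i.e.\ $gA=\{0,g\}+(g-1)A$. An element of $gA$ has the form $\sum_{k=1}^{g} r_k$ with each $r_k\in\{0,e,g\}$; say $p$ of them equal $e$ and the rest are $0$ or $g$. If some $r_k=0$ or some $r_k=g$ we are trivially done (drop it and absorb $g$ into the leading coordinate), so the only case to handle is $r_k=e$ for all $k$, giving the single value $ge$. Here I would use $\gcd(e,g)=1$: write $ge = g\cdot e$ and note $ge \in g + (g-1)A$ provided $(g-1)e - g\cdot(\text{something}) $ can be expressed as a sum of $g-1$ elements of $\{0,e,g\}$; concretely, since $1\le e\le g-1$, we have $ge = g\cdot(e-1) + g$, and $g(e-1)$ is a sum of $e-1\le g-2$ copies of $g$ together with $(g-1)-(e-1)=g-e$ copies of $0$, which is a legitimate element of $(g-1)A$ (as $e-1\le g-1$). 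Hence $ge\in g+(g-1)A$, completing $(i)$. One should double-check the boundary $e=g-1$ separately, but it fits the same pattern.

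The second half, showing $r(I)\ge g-1$, i.e.\ $(g-1)A\neq\{0,g\}+(g-2)A$, is the main obstacle and is where $\gcd(e,g)=1$ is essential. The natural candidate witness is the value $ne\bmod$-something: I would look for an element of $(g-1)A$ not representable as $g\cdot\{0,1\}$ plus a sum of $g-2$ members of $\{0,e,g\}$. Observe that every element of $\{0,g\}+(g-2)A$ is congruent mod $g$ to $pe$ for some $p\in[0,g-2]$, whereas $(g-1)e\in(g-1)A$ is congruent to $(g-1)e\bmod g$; since $\gcd(e,g)=1$, the residues $0\cdot e,\dots,(g-1)e$ are all distinct mod $g$, so $(g-1)e\not\equiv pe$ for any $p\le g-2$. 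This residue argument shows $(g-1)e\notin\{0,g\}+(g-2)A$, hence $r(I)\ge g-1$. Combining the two halves gives $r(I)=g-1$ in the coprime case, and then $r(I)=(g/\gcd(e,g))-1$ in general by the reduction at the start. I expect the delicate points to be the edge cases $e=1$ and $e=g-1$ in part $(i)$, and making the counting in $g(e-1)=$ "$e-1$ copies of $g$ plus zeros" land inside $(g-1)A$ rather than overshooting the allowed number of summands — these need a careful inequality check but no real new idea.
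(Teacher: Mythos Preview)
Your argument is correct. The reduction to the coprime case via Lemma~\ref{deleted} is clean, the upper bound computation $ge=g+\bigl((e-1)\cdot g+(g-e)\cdot 0\bigr)\in g+(g-1)A$ is valid for every $e\in[1,g-1]$ (the summand count is $e-1+(g-e)=g-1$, so there is no overshoot), and the residue argument for the lower bound is a crisp way of seeing that $(g-1)e$ cannot lie in $\{0,g\}+(g-2)A$.

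Your route differs from the paper's chiefly in the upper bound. The paper does \emph{not} reduce to the coprime case; it invokes Theorem~\ref{GLP} (the Gruson--Lazarsfeld--Peskine bound $r(I)\le g/\gcd(A)-|A|+2$) directly with $|A|=3$ to obtain $r(I)\le g/\gcd(e,g)-1$. Your argument replaces this appeal to a deep regularity theorem by the two-line combinatorial identity above, which is a genuine gain in elementarity. For the lower bound the two proofs are essentially the same: the paper exhibits the same witness $((g/k)-1)e$ (with $k=\gcd(e,g)$) and shows non-membership by a divisibility argument that is equivalent to your residue computation modulo $g$ after the reduction to $k=1$. So the net effect is that your proof is self-contained and avoids GLP, at the small cost of the preliminary reduction step.
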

\begin{proof}
The ideal $I\subset K[x,y]$ is quasi-equigenerated  with $G(I)=\{x^{i\frac{a}{g}}y^{ b-i\frac{b}{g}}\:\; i\in A\}$, where $A=\{0,e,g\}$.  Then $I=I_{A}$.
Let $k=\gcd(e,g)$. It follows from Theorem~\ref{GLP}  that
$I^{g/k}=JI^{(g/k)-1}$, where $J=(x^a,y^b)$.  It remains to be shown that  $I^{(g/k)-1}\neq JI^{(g/k)-2}$, which means that $((g/k)-1)A\neq \{0,g\}+ ((g/k)-2)A$.

\medskip
Indeed, an arbitrary element of $(g/k)-2)A$ is of the form $r_1e+r_2g$, where $0\leq r_1+r_2\leq (g/k)-2$.
For any $0\leq j\leq (g/k)-2$, let
\[
A_j=\{je+(\ell-j)g:\ 0\leq j\leq\ell\leq (g/k)-2\}.
\]
One can see that $((g/k)-2)A=\bigcup_{j=0}^{(g/k)-2} A_j$.

By contradiction,  if $((g/k)-1)e\in ((g/k)-2)A$, then $((g/k)-1)e\in A_j$  and $((g/k)-1)e=je+(\ell-j)g$ for some $0\leq j\leq \ell \leq (g/k)-2$. Thus
$((g/k)-1)(e/k)-j(e/k)=(\ell-j)(g/k)$. So $g/k$  divides $((g/k)-1-j)(e/k)$. Since $\gcd(g/k,e/k)=1$, it follows that  $g/k$ divides $(g/k)-1-j$, which is a contradiction. By a similar argument,  if $((g/k)-1)e\in ((g/k)-2)A+g$, then $((g/k)-1)e=je+(\ell-j+1)g$ for some $0\leq j \leq \ell \leq (g/k)-2$,  and hence
$g/k$ divides  $(g/k)-1-j$, a contradiction. Thus $((g/k)-1)e\notin \{0,g\}+ ((g/k)-2)A$.
\end{proof}

Now we get

\begin{Theorem}
\label{sunshine}
Let $I\in \Ic_{a,b}^1$. Let $g=\gcd(a,b)$. Then $r(I)=g-1$,  if and only  if
$g=1$ and $I=(x^a,y^b)$, or
\[
I=(x^a, x^{e(a/g)}y^{b-e(b/g)}, y^b),
\]
where  $e\in [1,g-1]$ and $\gcd(e,g)=1$.
\end{Theorem}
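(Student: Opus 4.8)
The plan is to establish the two implications separately, with the forward implication being essentially immediate from the results already proved.

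\medskip
\noindent\emph{The ``if'' direction.} If $g=1$, then $I=I_A$ with $\{0,1\}=\{0,g\}\subseteq A\subseteq[0,1]$ forces $A=\{0,1\}$, so $I=(x^a,y^b)=J$ and $r(I)=0=g-1$. If instead $I=(x^a,x^{e(a/g)}y^{b-e(b/g)},y^b)$ with $e\in[1,g-1]$ and $\gcd(e,g)=1$, then Proposition~\ref{masiproves} gives $r(I)=g/\gcd(e,g)-1=g-1$.

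\medskip
\noindent\emph{The ``only if'' direction.} Write $I=I_A$ with $\{0,g\}\subseteq A\subseteq[0,g]$ and assume $r(I)=g-1$. The case $g=1$ again forces $A=\{0,1\}$ and $I=(x^a,y^b)$, which is the first alternative, so I may assume $g\geq 2$. First I would apply Theorem~\ref{cold}: it yields $g-1=r(I)<g/\gcd(A)$. Setting $d=\gcd(A)$, this reads $d(g-1)<g$; if $d\geq 2$ then $2(g-1)\leq d(g-1)<g$, forcing $g<2$, a contradiction. Hence $\gcd(A)=1$. Next I would apply Theorem~\ref{GLP}, which gives $g-1=r(I)\leq g/\gcd(A)-|A|+2=g-|A|+2$, so $|A|\leq 3$. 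Since $0,g\in A$ and $g\geq 2$, we have $|A|\in\{2,3\}$; but $|A|=2$ would mean $A=\{0,g\}$, whence $\gcd(A)=g\geq 2$, contradicting $\gcd(A)=1$. Therefore $|A|=3$, say $A=\{0,e,g\}$ with $1\leq e\leq g-1$, and $\gcd(e,g)=\gcd(A)=1$. This puts $I$ in the stated form.

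\medskip
I do not expect a real obstacle here: the theorem falls out of combining the two known upper bounds — Theorem~\ref{cold} pins down $\gcd(A)=1$ and Theorem~\ref{GLP} pins down $|A|\leq 3$ — together with the elementary incompatibility of $|A|=2$ with $\gcd(A)=1$ when $g\geq 2$. The only points that need a moment of care are the degenerate values $g=1$ (everything is forced) and $g=2$ (one must still verify that $d(g-1)<g$ excludes $d\geq 2$), both handled above.
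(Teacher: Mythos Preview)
Your proof is correct and follows essentially the same approach as the paper. The only minor organizational difference is that you first invoke Theorem~\ref{cold} to pin down $\gcd(A)=1$ and then read off $\gcd(e,g)=1$ directly, whereas the paper applies Theorem~\ref{GLP} immediately (using $g/\gcd(A)\le g$) to get $|A|\le3$, disposes of $|A|=2$ via $r(J)=0$, and then extracts $\gcd(e,g)=1$ from Proposition~\ref{masiproves}; both routes are equally short.
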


\begin{proof}
 If we assume that $r(I)=g-1$, then Theorem~\ref{GLP} yields  that $g-1\leq \frac{g}{\gcd(A)}-|A|+2$. This implies that $|A|\leq 3$. If $|A|=2$, then $I=(x^a,y^b)$ and  $r(I)=0$, and hence $g=1$. If $|A|=3$, the result follows from Proposition~\ref{masiproves}.

Conversely, if $I=(x^a,y^b)$ with $\gcd(a,b)=1$, then clearly $r(I)=g-1$, and if $I=(x^a, x^{e(a/g)}y^{b-e(b/g),y^b})$ with  $c\in [1,g-1]$ and $\gcd(e,g)=1$,   then $r(I)=g-1$, again  by Proposition~\ref{masiproves}.
\end{proof}

Let $\Ic_{a,a}^1(j)$ be the set of all monomial ideals $I\in \Ic_{a,a}^1$ with $r(I)=j$. Theorem~\ref{Somayeh} implies that $\Ic_{a,a}^{1}(j)\neq \emptyset$ for $j=1,\ldots, a-1$. Let $m_a(j)=|\Ic_{a,a}^1(j)|$ and $m_a=|\Ic_{a,a}^1|$. Note that $m_a=2^{a-1}$.  It would be of interest to have  bounds for the ratio $m_a(j)/m_a$, and to know whether  $\lim_{a\to\infty} m_a(j)/m_a$ exists. Actually we expect that this limits exist and that $m(j):=\lim_{a\to\infty} m_a(j)/m_a=0$. For example, we have $m(1)=0$. Indeed, by  Theorem~\ref{redone} we obtain that $m_a(1)$ equals the number of  divisors of $a$ which are different from $a$. It follows that $1\leq m_a(1)<a$, and hence $0\leq m_a(1)/m_a<a/2^{a-1}$. This yields the desired conclusion.

We may ask similar questions when we restrict ourselves only to $3$-generated ideals. In this case we let $n_a$ be the number of all $3$-generated ideals of $\Ic_{a,a}^1$, and $n_{a}(j)$ be the number of all $3$-generated ideals of $\Ic_{a,a}^1$ of reduction number $j$. Here we can show the following

\begin{Proposition}
\label{ohohoh}
$\limsup_{a\to \infty}n_{a}(a-1)/n_a=1$ and $\liminf_{a\to \infty}n_a(a-1)/n_a=0$. In particular, $\lim_{a\to \infty}n_a(a-1)/n_a$ does not exist.
\end{Proposition}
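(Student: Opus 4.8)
The plan is to reduce the statement to a classical estimate for Euler's $\varphi$-function. Since here $a=b$, we have $g=\gcd(a,a)=a$, and a $3$-generated ideal in $\Ic_{a,a}^1$ is exactly an ideal of the form $I_A$ with $A=\{0,e,a\}$ for some $e\in[1,a-1]$; the assignment $e\mapsto I_{\{0,e,a\}}$ is a bijection onto this set, so $n_a=a-1$. By Proposition~\ref{masiproves} applied with $a=b$, one has $r(I_{\{0,e,a\}})=a/\gcd(e,a)-1$, and this equals $a-1$ precisely when $\gcd(e,a)=1$. Consequently $n_a(a-1)=\#\{e\in[1,a-1]:\gcd(e,a)=1\}=\varphi(a)$, and therefore
\[
\frac{n_a(a-1)}{n_a}=\frac{\varphi(a)}{a-1}.
\]

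First I would handle the $\limsup$. For every $a\geq 2$ the integer $a$ is not coprime to itself, so $\varphi(a)\leq a-1$, giving $\varphi(a)/(a-1)\leq 1$; on the other hand, if $a=p$ is a prime then $\varphi(p)/(p-1)=1$. Since there are infinitely many primes, $\limsup_{a\to\infty}\varphi(a)/(a-1)=1$, i.e.\ $\limsup_{a\to\infty} n_a(a-1)/n_a=1$.

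Next I would handle the $\liminf$, which is where the number-theoretic input enters. Take $a=P_k:=p_1p_2\cdots p_k$, the product of the first $k$ primes. Then $\varphi(P_k)/P_k=\prod_{i=1}^{k}(1-1/p_i)$, and since $\sum_{i\geq 1}1/p_i$ diverges (equivalently $\prod_p(1-1/p)=0$; this is the cited result of Hardy and Wright \cite{HW}), we get $\varphi(P_k)/P_k\to 0$ as $k\to\infty$. As $P_k/(P_k-1)\to 1$, it follows that $\varphi(P_k)/(P_k-1)\to 0$; since the ratio $\varphi(a)/(a-1)$ is always nonnegative, $\liminf_{a\to\infty}\varphi(a)/(a-1)=0$. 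Combining the two computations, the sequence $\bigl(n_a(a-1)/n_a\bigr)_a$ has $\limsup$ equal to $1$ and $\liminf$ equal to $0$, hence its limit does not exist.

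The only genuinely nontrivial ingredient is the divergence of $\sum 1/p$ (Mertens' theorem / the cited result of Hardy and Wright); everything else is a direct translation via Proposition~\ref{masiproves}. The main point requiring care is the correct enumeration of the $3$-generated members of $\Ic_{a,a}^1$ and the identification, through Proposition~\ref{masiproves}, of those with $r=a-1$ with the $e$ coprime to $a$.
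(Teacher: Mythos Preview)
Your proof is correct and follows the same reduction as the paper: both identify $n_a=a-1$ and, via Proposition~\ref{masiproves}, $n_a(a-1)=\varphi(a)$, so that the question becomes the behaviour of $\varphi(a)/(a-1)$. The only difference is in the number-theoretic input invoked afterwards. The paper quotes Hardy--Wright's Theorems~326 and~328 (the latter being the sharp estimate $\liminf (\varphi(a)/a)\log\log a=e^{-\gamma}$) to obtain $\limsup \varphi(a)/a=1$ and $\liminf \varphi(a)/a=0$; you instead argue more directly, using primes for the $\limsup$ (where $\varphi(p)/(p-1)=1$ on the nose) and primorials together with the divergence of $\sum_p 1/p$ for the $\liminf$. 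Your route is slightly more elementary and self-contained, while the paper's citation of Theorem~328 gives more precise quantitative information than is actually needed here.
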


\begin{proof}
It follows from Proposition~\ref{masiproves} that $n_a(d-1)=\phi(a)$, where $\phi$ is the well-known Euler function, while $n_a=a-1$. Hardy and Wright \cite[Theorem 326 ]{HW} showed that $\limsup_{a\to\infty}\phi(a)/a=1$. Also in Theorem~328 they showed that $$\liminf_{a\to\infty}(\phi(a)/a)\log \log a=e^{-\gamma},$$ where $\gamma = 0.577215665\ldots $ is the Euler constant. This implies that $\liminf_{a\to\infty}(\phi(a)/a)=0$. Therefore, we also have $\limsup_{a\to\infty}\phi(a)/(a-1)=1$ and $\liminf_{a\to\infty}\phi(a)/(a-1)=0$, as desired.
\end{proof}

\section{On the monomial reduction number for monomial ideals with $3$ generators}
\label{regensburg}

In this section we study the reduction number of ideals $I\in \Ic_{a,b}$ which are generated by 3 elements.  Let $I$ be such an ideal. Then
 \[
 I=(x^a,y^b, x^cy^d) \text{ and $0<c<a$, $0<d<b$  and $ad+bc\geq ab$.}
 \]
 Let $g=\gcd(a,b)$. It follows from Proposition~\ref{masiproves} that
 \begin{eqnarray}
 \label{veryfreshliver}
 r(I)=g/(\gcd(cg/a,g)-1, \quad \text{if}\quad  ad+bc= ab.
 \end{eqnarray}

\medskip
Now we consider the case that $I$ is not necessarily  quasi-equigenerated. Let, as before,  $$H^+_{a,b}=\{(c,d) \in \ZZ^2\:\;  bc+ad\geq ab\}.$$

We set $I_p=(x^a,y^b,x^cy^d)$ for $p =(c,d)\in H^+_{a,b}$, and $u_p=x^cy^d$.

\medskip
The following result characterizes the reduction  number of $I_p$.

\begin{Proposition}
\label{characterize}
With the assumptions and notation introduced we have
\[
r(I_p)=\min\{k \:   u_p^k\in J^k\}-1.
\]
\end{Proposition}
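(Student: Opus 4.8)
The claim is that $r(I_p) = \min\{k : u_p^k \in J^k\} - 1$, where $I_p = (x^a, y^b, x^c y^d) = J + (u_p)$ with $J = (x^a, y^b)$. Write $m$ for $\min\{k : u_p^k \in J^k\}$. The natural approach is to expand powers of $I_p$ and track exactly which powers of $u_p$ appear. Since $I_p = J + (u_p)$, the binomial-type expansion for monomial ideals gives
\[
I_p^n = \sum_{k=0}^{n} u_p^k J^{n-k},
\]
and similarly $J I_p^{n-1} = \sum_{k=0}^{n-1} u_p^k J^{n-k}$. Thus $I_p^n = J I_p^{n-1}$ holds if and only if the only ``new'' term, namely $u_p^n$, already lies in $\sum_{k=0}^{n-1} u_p^k J^{n-k}$; that is, $u_p^n \in \sum_{k=0}^{n-1} u_p^k J^{n-k}$.

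**Key steps.** First I would record the elementary identity $I_p^n = \sum_{k=0}^n u_p^k J^{n-k}$ and reduce the reduction-number condition $I_p^{n+1} = J I_p^n$ to the single membership $u_p^{n+1} \in \sum_{k=0}^{n} u_p^k J^{n+1-k}$. Second, I would show this membership is equivalent to $u_p^{n+1} \in u_p^k J^{n+1-k}$ for \emph{some} $k$ with $1 \le k \le n$ (the $k=0$ term is impossible for degree reasons since $u_p \notin J$ need not hold, but actually $u_p^{n+1}\in J^{n+1}$ is the $k=0$ case and is allowed; and $k = n+1$ is excluded), and then that $u_p^{n+1} \in u_p^k J^{n+1-k}$ is equivalent to $u_p^{n+1-k} \in J^{n+1-k}$ — this uses that $K[x,y]$ is a domain, so we may cancel the monomial $u_p^k$. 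Setting $j = n+1-k$, the condition becomes: $u_p^j \in J^j$ for some $j$ with $1 \le j \le n+1$. Third, I would invoke the trivial monotonicity observation that if $u_p^j \in J^j$ then $u_p^{j'} \in J^{j'}$ for all $j' \ge j$ (multiply by $u_p^{j'-j} \in J^{j'-j}$); hence the set $\{j : u_p^j \in J^j\}$ is an up-set, and $u_p^j \in J^j$ holds for some $j \le n+1$ precisely when $m \le n+1$, i.e. $n \ge m - 1$. Therefore the smallest $n$ with $I_p^{n+1} = J I_p^n$ is $n = m-1$, which is exactly the assertion.

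**Main obstacle.** The only genuinely delicate point is the second step: justifying that membership of $u_p^{n+1}$ in the \emph{sum} $\sum_k u_p^k J^{n+1-k}$ forces membership in one of the summands, and then the cancellation $u_p^{n+1} \in u_p^k J^{n+1-k} \Leftrightarrow u_p^{j} \in J^{j}$. Both are true because everything in sight is a monomial ideal: a monomial lies in a sum of monomial ideals iff it lies in one of them, and $u_p^k J^{n+1-k}$ is the monomial ideal whose generators are $u_p^k$ times generators of $J^{n+1-k}$, so $u_p^{n+1}$ lies in it iff $u_p^{n+1-k}$ is divisible by some generator of $J^{n+1-k}$, i.e. iff $u_p^{n+1-k} \in J^{n+1-k}$. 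I would state this cleanly using that $J^{n+1-k}$ is generated by the monomials $x^{a\alpha} y^{b\beta}$ with $\alpha + \beta = n+1-k$, and divisibility of monomials is coordinatewise. Everything else is bookkeeping with the expansion of $I_p^n$.
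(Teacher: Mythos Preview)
Your approach is essentially the paper's: expand $I_p^{n}=\sum_{k=0}^{n} u_p^{k}J^{n-k}$, observe that the only possible new generator beyond $JI_p^{n-1}$ is $u_p^{n}$, and then reduce the membership $u_p^{n}\in JI_p^{n-1}$ to a condition of the form $u_p^{j}\in J^{j}$ via cancellation. The paper phrases the last step as a minimality argument (if $u_p^{k}\in u_p^{i}J^{k-i}$ with $i>0$ then $u_p^{k-i}\in J^{k-i}$, contradicting minimality of $k$), while you give a direct characterization of the set of $n$ with $I_p^{n+1}=JI_p^{n}$; the content is the same.

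There is, however, one genuine slip. Your ``trivial monotonicity observation'' is false, and the parenthetical justification (``multiply by $u_p^{j'-j}\in J^{j'-j}$'') assumes exactly what is in question. Concretely, take $a=b=3$ and $u_p=xy^{2}$: then $u_p^{3}=x^{3}y^{6}\in J^{3}$, but $u_p^{4}=x^{4}y^{8}\notin J^{4}=(x^{12},x^{9}y^{3},x^{6}y^{6},x^{3}y^{9},y^{12})$, so $\{j:u_p^{j}\in J^{j}\}$ is \emph{not} an up-set. Fortunately your argument does not actually use this: the equivalence
\[
\bigl(\exists\,j\le n+1:\ u_p^{j}\in J^{j}\bigr)\ \Longleftrightarrow\ m\le n+1
\]
follows immediately from the definition of $m$ as the minimum of the set, with no monotonicity required. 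Delete the monotonicity sentence and the proof is correct as written.
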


\begin{proof}
We first show that
\begin{eqnarray}
\label{evening}
G(I_p^k)\setminus G(JI_p^{k-1})\subseteq\{u_p^k\} \quad\text{for all}\quad k> 0.
\end{eqnarray}

Indeed,  $I_p=J+L$, where $L=(u_p)$, and   hence
\[
I_p^k=J^k+J^{k-1}L+\cdots + JL^{k-1}+L^k \quad\text{and}\quad JI_p^{k-1}=J^k+J^{k-1}L+\cdots + JL^{k-1}.
\]
Therefore, $I_p^k= JI_p^{k-1}+L^k$, which implies that $G(I_p^k) \subseteq G(JI_p^{k-1})\union G(L^k)$. Since  $G(L^k)=\{u_p^k\}$, the assertion follows.

Recall  that  $r(I_p)= \min\{k \:   I_p^k=JI_p^{k-1}\}-1$. Thus it follows from  (\ref{evening})  that $r(I_p)= \min\{k \:  u_p^k\in JI_p^{k-1}\}-1$.
Now if $u_p^k\in JI_p^{k-1}$, then $u_p^k\in J^{k-i}L^i$ for some $0\leq i\leq k-1$.
If $i>0$, then $u_p^{k-i}\in J^{k-i}\subseteq JI_p^{k-i-1}$. Hence in this case $k$ is not minimal with the property that  $u_p^k\in JI_p^{k-1}$. Therefore, $i=0$, and the desired result  follows.
\end{proof}

Note that $(\ZZ^2,<)$ is  a  partially  ordered set,  with
\[
(e,f)<(g,h) \Leftrightarrow\;  \text{$e<g$ and  $f\leq h$, or $e\leq g$ and  $f< h$}.
\]
The characterization of the reduction number given in Proposition~\ref{characterize} can be phrased as follows:
\medskip
Let $k$ be the smallest positive integer for which there exist positive   integers $k_1$ and $k_2$ such that
\begin{eqnarray}
\label{means}
k(c,d)\geq (k_1a,k_2b) \quad \text{and} \quad k=k_1+k_2.
\end{eqnarray}
Then $r(I_p)=k-1$.

\begin{Proposition}\label{I_p}
\label{goulash}
With the notation introduced we have
\begin{enumerate}
\item[(a)] $r(I_p)<\min\{a,b\}$  for all  $p\in  H^+_{a,b}$.
\item[(b)] $r(I_q)\leq r(I_p)$ for $p, q\in  H^+_{a,b}$ with $q\geq p$.
\end{enumerate}
\end{Proposition}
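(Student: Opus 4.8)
The plan is to use the reformulation of Proposition~\ref{characterize} given in \eqref{means}: $r(I_p)=k-1$, where $k$ is the least positive integer admitting positive integers $k_1,k_2$ with $k=k_1+k_2$ and $k(c,d)\geq(k_1a,k_2b)$, i.e.\ $kc\geq k_1a$ and $kd\geq k_2b$. For part (b) the argument is immediate: if $q=(c',d')\geq p=(c,d)$, then any pair $(k,k_1,k_2)$ witnessing $r(I_p)\leq k-1$ also witnesses $r(I_q)\leq k-1$, since $kc'\geq kc\geq k_1a$ and $kd'\geq kd\geq k_2b$. Hence the minimal such $k$ for $q$ is at most that for $p$, giving $r(I_q)\leq r(I_p)$.

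For part (a) I would show that $k=\min\{a,b\}$ always works. Say $\min\{a,b\}=b$ (the case $\min\{a,b\}=a$ is symmetric, exchanging the roles of the two coordinates). Take $k_2=b$ and $k_1=a$... no: we need $k=k_1+k_2$ with $k=b$, so I instead argue as follows. Set $k=b$ and look for $k_1,k_2\geq 1$ with $k_1+k_2=b$, $bc\geq k_1a$ and $bd\geq k_2b=k_2\min\{a,b\}$, i.e.\ $bd\geq k_2 b$, i.e.\ $d\geq k_2$. Since $p\in H^+_{a,b}$ we have $bc+ad\geq ab$, hence $bc\geq a(b-d)$, so $k_1:=b-d$ satisfies $bc\geq k_1 a$; and then $k_2=b-k_1=d$ satisfies $bd\geq k_2 b$ trivially. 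We need $k_1=b-d\geq 1$, which holds since $d<b$, and $k_2=d\geq 1$, which holds since $d>0$. Thus $(k,k_1,k_2)=(b,\,b-d,\,d)$ witnesses $r(I_p)\leq b-1<b=\min\{a,b\}$. In the symmetric case $\min\{a,b\}=a$, one takes $(k,k_1,k_2)=(a,\,c,\,a-c)$: the inequality $ac\geq k_1 a$ reads $c\geq c$, and $ad\geq k_2 b=(a-c)b$ follows from $ad+bc\geq ab$; the positivity $0<c<a$ gives $k_1,k_2\geq 1$. Either way $r(I_p)\leq\min\{a,b\}-1$, proving (a).

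The argument is entirely elementary once Proposition~\ref{characterize} and its reformulation \eqref{means} are in hand; there is no real obstacle. The only point requiring a little care is the split into the two cases according to which of $a,b$ is smaller, and making sure the chosen $(k_1,k_2)$ are genuinely positive, which is exactly where the strict inequalities $0<c<a$ and $0<d<b$ from the definition of a $3$-generated ideal in $\Ic_{a,b}$ are used.
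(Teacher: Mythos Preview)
Your proof is correct and follows essentially the same approach as the paper. For (a) the paper assumes $a\leq b$, takes $k=a$ and $i=c$ (your $k_1=c$, $k_2=a-c$), and checks the two inequalities exactly as you do; you additionally spell out the symmetric case $b\leq a$, but the argument is identical. For (b) the paper likewise observes that a witness $(k,k_1,k_2)$ for $p$ serves as a witness for any $q\geq p$.
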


\begin{proof}




(a) We may assume $a\leq b$. We must show that $r(I_p)<a$. By Proposition~\ref{characterize},  $r(I_p)<a$,  if  $u_p^a\in J^a$.
This is the case if and only if  $ac\geq ia$ and $ad\geq(a-i)b$ for some $0\leq i \leq a$.  We may choose $i=c$. With $i=c$ the first inequality is trivially satisfied, while $ad\geq(a-c)b$  because $p\in  H^+_{a,b}$.

(b)  Let $p =(c,d)$ and $q=(e,f)$,  and let $r(I_p)=r_1+1$ and  $r(I_q)=r_2+1$. By (\ref{means}),  $r_1$ is smallest integer for which   $r_1c\geq ia$ and $r_1d\geq (r_1-i)b$ for some $i$. Since $e\geq c$ and $f\geq d$, it follows that   $r_1e\geq ia$ and $r_1f\geq (r_1-i)b$. This shows that $r_2\leq r_1$.
\end{proof}

The following corollary says that for any $a,b$,  the extremal values $1$ and $\min\{a,b\}-1$ for $r(I_p)$ are attained for suitable $p$.

\begin{Corollary}
\label{smallbig}
With the notation and assumptions as before, let $a\leq b$.Then
\[
r(I_p)=a-1,  \text{ if $p=(1,b-1)$, and }
\]
\[
r(I_p)=1,  \text{ if and only if $(a/2,b/2)\leq p=(c,d)$. }
\]
In particular, $r(I_p)=1$ for $p=(a-1,b-1)$.
\end{Corollary}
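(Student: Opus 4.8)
The plan is to apply Proposition~\ref{characterize} throughout, in the reformulated shape (\ref{means}): for $p=(c,d)$ the number $r(I_p)+1$ is the least $k\geq 1$ for which $u_p^k\in J^k$, equivalently the least $k$ admitting a splitting $k=k_1+k_2$ with $k_1,k_2\geq 0$, $kc\geq k_1a$ and $kd\geq k_2b$. The only combinatorial input needed is that a monomial $x^\alpha y^\beta$ lies in $J^k=(x^a,y^b)^k$ exactly when $\alpha\geq ia$ and $\beta\geq(k-i)b$ for some $0\leq i\leq k$.

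First I would treat $p=(1,b-1)$, where $u_p^k=x^ky^{k(b-1)}$. Membership $u_p^k\in J^k$ would force $k\geq ia$, hence $i=0$ as soon as $k<a$; but then one would need $k(b-1)\geq kb$, which fails for $k\geq1$. So $u_p^k\notin J^k$ for every $1\leq k\leq a-1$, giving $r(I_p)\geq a-1$. Taking now $k=a$ and $i=1$, the inequality $ac\geq a$ holds since $c=1$, and $ad=a(b-1)\geq(a-1)b$ amounts exactly to $a\leq b$, i.e. to $p\in H^+_{a,b}$; hence $u_p^a\in J^a$ and $r(I_p)=a-1$. (By Proposition~\ref{goulash}(a) one already has $r(I_p)\leq a-1$, so in fact only the lower bound is needed.)

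Next I would establish the characterization of $r(I_p)=1$. Since $0<c<a$ and $0<d<b$, the monomial $u_p=x^cy^d$ is divisible by neither $x^a$ nor $y^b$, so $u_p\notin J$ and therefore $r(I_p)\geq 1$; consequently $r(I_p)=1$ if and only if $u_p^2\in J^2$. Writing $J^2=(x^{2a},x^ay^b,y^{2b})$, the monomial $x^{2c}y^{2d}$ is a multiple of one of these generators iff $2c\geq 2a$, or $2c\geq a$ and $2d\geq b$, or $2d\geq 2b$; the first and third alternatives are excluded by $c<a$ and $d<b$. Hence $r(I_p)=1$ iff $a/2\leq c$ and $b/2\leq d$, that is iff $(a/2,b/2)\leq p$. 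Finally, for $p=(a-1,b-1)$ one has $a-1\geq a/2$ and $b-1\geq b/2$ because $a,b\geq 2$ (note $0<c<a$ forces $a\geq 2$, and $b\geq a$), so $(a/2,b/2)\leq p$ and $r(I_{(a-1,b-1)})=1$.

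There is no serious obstacle here: both statements reduce, via Proposition~\ref{characterize}, to the single elementary membership criterion for $u_p^k$ in $J^k$. The points that require a little care are the minimality assertions — verifying that no exponent smaller than $a$ (respectively smaller than $2$) works — and, for the second statement, explicitly recording that $u_p\notin J$ so that the degenerate case $r(I_p)=0$ is ruled out.
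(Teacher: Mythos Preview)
Your proof is correct and follows essentially the same route as the paper's: both arguments reduce everything to the membership criterion $u_p^k\in J^k$ from Proposition~\ref{characterize}/(\ref{means}). The only cosmetic differences are that the paper invokes Proposition~\ref{goulash}(a) for the upper bound $r(I_{(1,b-1)})\leq a-1$ while you verify $u_p^a\in J^a$ directly, and that you make the exclusion of $r(I_p)=0$ explicit for the second claim, which the paper leaves implicit.
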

\begin{proof}
Let $p=(1,b-1)$. By Proposition \ref{goulash}, $r(I_p)\leq a-1$. We must show that for any integer $k<a$ we have
$(xy^{b-1})^k\notin(x^a,y^b)^k$, see Proposition~\ref{characterize}. Suppose that $(xy^{b-1})^k\in(x^a,y^b)^k$,  then exists  some integer $i$  with $0\leq i\leq k$ such that   $x^{(k-i)a}y^{ib} \mid x^ky^{(b-1)k}$.   It follows that $(k-i)a\leq k$ and $ib\leq (b-1)k$. The second inequality implies that  $i<k$,  and then the first inequality implies that $a\leq k$, a contradiction.

We have  $(a/2,b/2)\leq (c,d)$ if and only if $(a,b)\leq 2(c,d)$. Hence the assertion follows from (\ref{means}).
\end{proof}

Let $R_{(a,b)}=\{r(I_p)\: p\in  H^+_{a,b}\}$.  Then $R_{(a,b)}\subseteq [1,\min\{a,b\}-1]$. For example, $R_{(5,7)}= [1,4]$. In  general,  $R_{(a,b)}$ is a proper subset of $[1,\min\{a,b\}-1]$. For example,  $R_{(7,10)}\subsetneq [1,6]$, because $5\not \in R_{(7,10)}$.

It seems to be difficult to determine all pairs of positive integers $(a,b)$ for which $R_{(a,b)}=[1,\min\{a,b\}-1]$. However, in the next result we show that if $\min\{a,b\}>2$, then the set of reduction numbers $\{r(I_p)\:\; p\in L_0\}\neq [1,\min\{a,b\}-1]$.  Here $L_0$ is the line segment connecting $(a,0)$ with $(0,b)$. Indeed,  we have

\begin{Proposition}
\label{freshliver}
Let $a\leq b$ and $p\in L_0$. Then
\begin{enumerate}
\item[(a)]$r(I_p)< a/2$, if $a$ does not divide $b$.
\item[(b)] $r(I_p)\neq a-2$.
\end{enumerate}
\end{Proposition}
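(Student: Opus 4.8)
The plan is to route everything through Proposition~\ref{masiproves}, which already supplies a closed formula for the reduction number of a $3$-generated quasi-equigenerated ideal. First I would fix $g=\gcd(a,b)$ and recall from~(\ref{hand}) that the integer points of the segment $L_0$ are exactly the points $(i\frac{a}{g},\,b-i\frac{b}{g})$, $i=0,\dots,g$. If $g=1$ there is no $p\in L_0$ with $0<c<a$, so both assertions are vacuous and I may assume $g\ge 2$. For such a $p=(c,d)$ I write $c=e\frac{a}{g}$ with $e\in[1,g-1]$; then $I_p$ is precisely the ideal of Proposition~\ref{masiproves}, so $r(I_p)=\frac{g}{\gcd(e,g)}-1$. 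Putting $m:=g/\gcd(e,g)$, we have $r(I_p)=m-1$, and two elementary observations about $m$ will carry the whole argument: first, $m\mid g$ and $g\mid a$, hence $m\mid a$; second, since $1\le e\le g-1$ we get $\gcd(e,g)\le e\le g-1$, so $m\ge g/(g-1)>1$, i.e.\ $m\ge 2$ (equivalently $r(I_p)\ge 1$, as it must be because $I_p$ is genuinely $3$-generated).

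Granting these, part~(a) is immediate: if $a$ does not divide $b$ then $g$ is a proper divisor of $a$, so $g\le a/2$, and therefore $r(I_p)=m-1\le g-1\le a/2-1<a/2$. For part~(b) I would argue by contradiction. If $r(I_p)=a-2$ then $m=a-1$; the bound $m\ge 2$ forces $a\ge 3$, while $m\mid a$ forces $(a-1)\mid a$, hence $(a-1)\mid\bigl(a-(a-1)\bigr)=1$, so $a-1=1$ and $a=2$, contradicting $a\ge 3$. Thus $r(I_p)\ne a-2$.

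I do not expect a genuine obstacle here once Proposition~\ref{masiproves} is available; the only places needing a little care are the vacuous case $g=1$, recording the divisibility $m\mid a$, and the bound $m\ge 2$, which is exactly what rules out the degenerate value $a=2$ in part~(b) (and incidentally keeps $r(I_p)\ge 1$, consistent with $I_p$ having three minimal generators). Should one prefer an argument that does not cite Proposition~\ref{masiproves}, the same two facts about $m$ can instead be extracted directly from the reformulation~(\ref{means}) of Proposition~\ref{characterize}, but invoking~\ref{masiproves} is the shortest route.
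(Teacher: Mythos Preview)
Your proposal is correct and follows essentially the same route as the paper: both reduce everything to the formula $r(I_p)=g/\gcd(e,g)-1$ from Proposition~\ref{masiproves} (restated as~(\ref{veryfreshliver})) and then finish with elementary divisibility. Your argument for~(a) is in fact slightly cleaner---you go directly from $g\mid a$, $g<a$ to $g\le a/2$, whereas the paper argues by contradiction---and your treatment of~(b) matches the paper's almost verbatim, with the helpful addition that you make the bound $m\ge 2$ (equivalently $a\ge 3$) explicit before invoking $(a-1)\mid a$.
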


\begin{proof}
(a) Suppose $r(I_p)\geq  a/2$. Then (\ref{veryfreshliver}) implies that $a/2\leq g/\gcd(cg/a,g)-1$, where $g=\gcd(a,b)$. Since $g\leq a$ it follows that $\gcd(cg/a,g)=1$. This implies that $a/2\leq g-1$. Let $a=a'g$ with $a'$ a positive integer. Then $(a'g)/2<g$, and hence $a'/2<1$. Therefore, $a'=1$, and so $a=g$, a contradiction.

(b) Suppose $r(I_p)= a-2$. Then $a>2$ and $a-1=g/\gcd(cg/a,g)$, a contradiction because $a-1$ does not divide $a$.
\end{proof}

Let as before, $a\leq b$.  The next result shows that in general $(a-1)-|R_{(a,b)}|$ can be as big as we want. Indeed, we have

\begin{Proposition}
\label{ourlimits}
Let $p>2$ be a prime number. Then
\[
(p-1)-|R_{(p,p)}|\geq (p-1)/2-1.
\]
\end{Proposition}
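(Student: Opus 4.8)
The plan is to prove the stronger inclusion
\[
R_{(p,p)}\subseteq\{1,2,\dots,(p-1)/2\}\cup\{p-1\}.
\]
Since the right-hand set has $(p+1)/2$ elements and is contained in $\{1,\dots,p-1\}$, this gives $(p-1)-|R_{(p,p)}|\ge(p-1)-(p+1)/2=(p-1)/2-1$ at once. (For $p=3$ the asserted inequality reads $0\ge 0$, so one may assume $p\ge 5$.) The starting point is a numerical formula for $r(I_q)$, where $q=(c,d)$ runs over the points with $0<c,d<p$ and $c+d\ge p$, so that $I_q=(x^p,y^p,x^cy^d)$. By Proposition~\ref{characterize}, $r(I_q)+1$ is the least $k\ge 1$ with $x^{kc}y^{kd}\in(x^p,y^p)^k$; since $(x^p,y^p)^k=(x^{ip}y^{(k-i)p}:0\le i\le k)$, this holds if and only if $ip\le kc$ and $(k-i)p\le kd$ for some $i$, that is, if and only if $\lfloor kc/p\rfloor+\lfloor kd/p\rfloor\ge k$. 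Multiplying through by $p$ and using $k(c+d)=kp+k\delta$ with $\delta:=c+d-p\ge 0$, this becomes $(kc\bmod p)+(kd\bmod p)\le k\delta$. Hence
\[
r(I_q)=\min\{k\ge 1:(kc\bmod p)+(kd\bmod p)\le k\delta\}-1,
\]
and in particular $r(I_q)\ge 1$, because at $k=1$ one has $c+d\ge p>\delta$.

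Next I would run a case analysis on $\delta$. If $\delta=0$, then $d=p-c$ and the left-hand side of the last inequality is $0$ when $p\mid kc$ and equals $p$ otherwise; as $\gcd(c,p)=1$ this forces $p\mid k$, so the minimum is $p$ and $r(I_q)=p-1$ (equivalently, this is~(\ref{veryfreshliver}) for $a=b=p$). If $\delta\ge 2$, take $k=(p+1)/2$, so $k\delta\ge p$; then $(kc\bmod p)+(kd\bmod p)$ lies in $[0,2p-2]$ and is $\equiv k\delta\pmod{p}$, hence equals $k\delta\bmod p$ or $(k\delta\bmod p)+p$, and both are $\le k\delta$ because $\lfloor k\delta/p\rfloor\ge 1$; thus the minimum is $\le(p+1)/2$ and $r(I_q)\le(p-1)/2$.

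The remaining case $\delta=1$ is the crux. Here $d=p+1-c$ with $c\in\{2,\dots,p-1\}$, so $c\not\equiv 1\pmod{p}$. Using $kd\equiv k-kc\pmod{p}$, one checks directly that $(kc\bmod p)+(kd\bmod p)\le k$ holds if and only if $m_k:=kc\bmod p\le k$, equivalently, since $c\not\equiv 1$, if and only if $m_k<k$. Now I would invoke the reflection identity $m_{p-k}=p-m_k$ (valid for $1\le k\le p-1$) at the two middle indices $k_1=(p-1)/2$ and $k_2=(p+1)/2$: as $k_1+k_2=p$ we get $m_{k_1}+m_{k_2}=p$, so at least one of $m_{k_1},m_{k_2}$ is $\le(p-1)/2$. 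If $m_{k_1}\le(p-1)/2=k_1$, then $m_{k_1}<k_1$ and the defining condition already holds at $k=k_1$; if $m_{k_2}\le(p-1)/2<k_2$, it holds at $k=k_2$. Either way the minimum is $\le(p+1)/2$, and so $r(I_q)\le(p-1)/2$.

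The step I expect to be the main obstacle is exactly this last case $\delta=1$. For $\delta\ge 2$ a single value of $k$ works uniformly, but for $\delta=1$ this fails, and one must exploit the arithmetic of the residues $kc\bmod p$---specifically the reflection $m_{p-k}=p-m_k$ applied at the two central indices---rather than any purely combinatorial estimate: a permutation of $\{1,\dots,p-1\}$ can stay strictly above the diagonal on a long initial segment, so a counting or pigeonhole bound alone cannot suffice. Once the three cases are assembled one obtains the displayed inclusion, and since the extreme values $1$ and $p-1$ are both attained (Corollary~\ref{smallbig}), the bound $|R_{(p,p)}|\le(p+1)/2$---and hence the proposition---follows.
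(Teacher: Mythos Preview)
Your proof is correct and arrives at the same inclusion as the paper, $R_{(p,p)}\subseteq\{1,\dots,(p-1)/2\}\cup\{p-1\}$, but the execution differs in two places worth noting. For points with $\delta=c+d-p\ge 2$ the paper does not argue directly: it reduces every $q''\notin L_0$ to some $q'$ with $\delta=1$ via the monotonicity $r(I_{q''})\le r(I_{q'})$ of Proposition~\ref{goulash}(b), whereas you bypass this entirely by observing that $k=(p+1)/2$ already gives $k\delta\ge p$, so the residue inequality holds outright. For the critical case $\delta=1$, the paper parametrises $q'=(p-i,i+1)$, splits on the parity of $i$, and for each parity writes down explicit exponents witnessing $u_{q'}^{(p\mp 1)/2}\in J^{(p\mp 1)/2}$; your reflection identity $m_{(p-1)/2}+m_{(p+1)/2}=p$ is the conceptual form of exactly this dichotomy (indeed one computes $m_{(p-1)/2}=i/2$ for $i$ even and $m_{(p+1)/2}=(p-i)/2$ for $i$ odd, matching the paper's two cases). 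Your route is a bit cleaner---it avoids invoking monotonicity and replaces a case-by-case verification by a single symmetry---while the paper's route has the virtue of making the divisibility witnesses completely explicit.
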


\begin{proof}
Let $I_q=(x^p,y^p,x^cy^d)$ with  $q =(c,d)\in H^+_{a,b}$. By Proposition~\ref{masiproves}, for all integer points $q'$ on $L_0$  we know that $r(I_{q'})=p-1$. Next  we show that for all  $q'=(p-i,i+1)$ with $i=1,\ldots,p-2$ we have $r(I_{q'})\leq \frac{p-1}{2}$.
For the proof it is enough to show that there  exist  integers $j,k> 0$ with  $j+k\leq \frac{p-1}{2}$ such that either

\begin{enumerate}
\item[(i)]$(\frac{p+1}{2}) (p-i)\geq (p-i)[(\frac{p-1}{2})-(j+k)]+pj+p$, and
\item[(ii)] $(\frac{p+1}{2}) (i+1)\geq (i+1)[(\frac{p-1}{2})-(j+k)]+pk$
 \end{enumerate}
 or
\begin{enumerate}
\item[(iii)]$(\frac{p+1}{2}) (p-i)\geq (p-i)[(\frac{p-1}{2})-(j+k)]+pj$, and
\item[(iv)] $(\frac{p+1}{2}) (i+1)\geq (i+1)[(\frac{p-1}{2})-(j+k)]+pk+p$.
 \end{enumerate}
Indeed we should find $j$ and $k$ such that

\begin{enumerate}
\item[(i)$'$]$(p-i)(j+k+1)\geq p(j+1)$, and
\item[(ii)$'$] $((i+1)(j+k+1)\geq pk$
 \end{enumerate}
 or
\begin{enumerate}
\item[(iii)$'$]$(p-i)(j+k+1)\geq pj$, and
\item[(iv)$'$] $((i+1)(j+k+1)\geq p(k+1)$.
 \end{enumerate}
Suppose first that  $i$ is an even number. Then we choose $j= \frac{p-i-1}{2}$, $k=(i/2)-1$. Then (iii)$'$ and (iv)$'$ hold. Now suppose that $i$ is odd. Set $j= (\frac{p-i}{2})-1$ and $k=\frac{i+1}{2}$. Then the inequalities (i)$'$ and  (ii)$'$ hold for this choice of $j$ and $k$.

Now let $q''\in  H^+_{a,b}\setminus L_0$ arbitrary. Then there exists $q'=(p-i,i+1)$ with $q'\leq q''$. Together with  Proposition~\ref{goulash}, we get  $r(I_{q''})\leq r(I_q')\leq  \frac{p-1}{2}$. Therefore, $|R_{(p,p)}|\leq (p-1)/2+1$. This yields the desired conclusion.
\end{proof}

In contrast to  the previous result we have

\begin{Proposition}
\label{specialnight}
Let $a\geq 2$ be an integer. Then $$\bigcup_b R_{(a,b)}=\{1,\ldots,a-1\},$$
where the union is taken over all $b $ with $b\geq a$.
\end{Proposition}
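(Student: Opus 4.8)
The inclusion $\bigcup_{b\ge a}R_{(a,b)}\subseteq\{1,\dots,a-1\}$ is immediate, since for $b\ge a$ one has $R_{(a,b)}\subseteq[1,\min\{a,b\}-1]=[1,a-1]$. So the plan is to fix $j\in[1,a-1]$ and produce an explicit $b\ge a$ together with a point $p\in H^+_{a,b}$ for which $r(I_p)=j$; this gives $j\in R_{(a,b)}$ and finishes the proof.

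The choice I would make is $N=\lceil a/(j+1)\rceil$, $b=(j+1)N$ and $p=(c,d)=(a-1,N)$, so that $I_p=(x^a,y^b,x^{a-1}y^N)$. First I would dispatch the bookkeeping: $b=(j+1)N\ge a$; since $a\ge2$ and $j\ge1$ we have $0<a-1<a$ and $0<N<(j+1)N=b$; and one computes $bc+ad=(j+1)N(a-1)+aN=N\bigl((j+2)a-(j+1)\bigr)\ge a(j+1)N=ab$, the last inequality being equivalent to $a\ge j+1$. Hence $p\in H^+_{a,b}$ and $I_p\in\Ic_{a,b}$.

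To evaluate $r(I_p)$ I would apply the characterization \eqref{means}: $r(I_p)=k-1$, where $k$ is the least positive integer admitting positive integers $k_1,k_2$ with $k_1+k_2=k$, $k(a-1)\ge k_1a$ and $kN\ge k_2(j+1)N$. Rewriting the first inequality as $k_2=k-k_1\ge k/a$ and the second as $k_2\le k/(j+1)$, the condition becomes simply that the interval $[\,k/a,\ k/(j+1)\,]$ contains a positive integer (positivity of $k_1=k-k_2$ is then automatic, since such a $k_2$ satisfies $k_2\le k/(j+1)<k$). The point is now transparent: for $1\le k\le j$ the interval has right endpoint $k/(j+1)\le j/(j+1)<1$, so it contains no positive integer; and for $k=j+1$ the interval is $[(j+1)/a,\ 1]$, which contains $1$ precisely because $j+1\le a$. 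Therefore $k=j+1$, i.e.\ $r(I_p)=j$, as desired.

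The only step that needs any thought is the first one — choosing $p$ so that \eqref{means} collapses to the transparent statement above; once one notices that (after the reflection $i\mapsto k-i$) the question is whether $[\,1/a,\ 1/(j+1)\,]$ contains a rational of denominator $<j+1$, and that it cannot, since every positive rational of denominator $\le j$ exceeds $1/(j+1)$, the remaining verifications are one-line checks. I do not anticipate a genuine obstacle.
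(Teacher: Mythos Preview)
Your proof is correct and uses exactly the same construction as the paper: with $r=j+1$, the paper likewise takes $c=a-1$, $d=\lceil a/r\rceil$, and $b=r\lceil a/r\rceil$, and verifies minimality by the same two inequalities. Your reformulation of \eqref{means} as the existence of an integer in the interval $[k/a,\ k/(j+1)]$ is a slightly cleaner packaging of the same computation, and you are more explicit in checking that $p\in H^+_{a,b}$, but the argument is essentially identical.
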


\begin{proof}
For any $a$ and $b$  we know that  $R_{(a,b)}\subseteq [1,\min\{a,b\}-1]$. Hence, obviously one has
$\cup_{ b} R_{(a,b)}\subseteq\{1,\ldots,a-1\}.$  In order to prove the other inclusion, we apply (\ref{means}) and  must show that for given $a\geq 2$ and $r$ with $1\leq r-1\leq a-1$,  there exist $b,c,d$  with $b\geq a$ and $c<a,d<b$  such that $cr\geq at$,  $dr\geq bs$ for some positive integers $s$ and $t$ with   $r=s+t$.

We choose $t=r-1$, then $s=1$. With this choice of $s,t$ we may choose  $c=a-1$, $d=\lceil\frac{a}{r}\rceil$ and $ b=\lceil\frac{a}{r}\rceil r$. Then all required  inequalities are satisfied.   It is sufficient that we show $ r$ is a smallest number  for this  choice of $c,b$ and $d$. Now let   $r'$ be any positive integer  such that
 \begin{enumerate}
\item[(i)]$(a-1)r'\geq a(r'-i)$, and
\item[(ii)] $\lceil\frac{a}{r}\rceil r'\geq (\lceil\frac{a}{r}\rceil r)i$.
 \end{enumerate}
(ii) implies that  $r'\geq ri$,  and by  (i) we obtain that   $i\ne 0$.  Hence, $r'\ge r$, as desired.
\end{proof}

\section{On the   reduction numbers of a monomial ideal and  its  quasi-equigenerated part}
\label{part}

Let $I\in \Ic_{a,b}$.  By the definition of $\Ic_{a,b}$, we  have $\nu(u)\geq 1$ for all $u\in I$, where $\nu(u)=(ad+bc)/ab$, see Section~\ref{preliminaries}.  The  {\em quasi-equigenerated part} $I_0\in \Ic_{a,b}^1$ of $I$  is defined to be the monomial ideal generated by all $u\in G(I)$ with $\nu(u)=1$.

\medskip
In Proposition~\ref{goulash} it is shown that $r(I_q)\leq r(I_p)$ if $I_q\subseteq I_p$. One may wonder, whether  always $r(I)\leq r(I')$, if $I\subseteq I'$. Simple examples show that is not always the  case. However, we have
\begin{Proposition}
\label{turtle}
Let $I\in \Ic_{a,b}$ and $I_0$ be the quasi-equigenerated part of $I$.  Then
\[
G(I_0^k)\setminus G(JI_0^{k-1})\subseteq G(I^k)\setminus G(JI^{k-1}).
\]
In particular, $r(I_0)\leq r(I)$.
\end{Proposition}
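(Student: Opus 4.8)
The plan is to track the additive ``height'' function $\nu$, which takes the value $1$ on every minimal generator of the quasi-equigenerated part $I_0$. First I would record the trivial but essential inclusions $J=(x^a,y^b)\subseteq I_0\subseteq I$, valid because $x^a,y^b\in G(I)$ have $\nu$-value $1$ and hence lie in $G(I_0)$. Next I would recall the standard fact that for a monomial ideal $M=(G(M))$ the minimal generators of $M^k$ form a subset of the set of $k$-fold products of elements of $G(M)$. Applied to $I_0$, this says that every $w\in G(I_0^k)$ is literally a product $v_1\cdots v_k$ with $v_i\in G(I_0)$, and therefore, by additivity of $\nu$ and $\nu(v_i)=1$, that $\nu(w)=k$. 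This exact equality $\nu(w)=k$ is what drives everything.

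Now fix $w\in G(I_0^k)\setminus G(JI_0^{k-1})$, so $w\in I_0^k\subseteq I^k$ and $\nu(w)=k$. I would first upgrade the hypothesis to $w\notin JI_0^{k-1}$ as an ideal: if $w$ were divisible by some $g\in G(JI_0^{k-1})$, then since $g\in JI_0^{k-1}\subseteq I_0^k$ it is divisible by some element of $G(I_0^k)$, and minimality of $w$ in $G(I_0^k)$ forces that element, and hence $g$, to equal $w$, contradicting $w\notin G(JI_0^{k-1})$. Then I would show $w\notin JI^{k-1}$: a witness would be a divisor $z\,u_1\cdots u_{k-1}$ of $w$ with $z\in\{x^a,y^b\}$ and $u_j\in G(I)$, and then $k=\nu(w)\ge 1+\sum_{j}\nu(u_j)\ge 1+(k-1)=k$ forces every inequality to be an equality. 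This means $w=z\,u_1\cdots u_{k-1}$ exactly (no extra factor, since $\nu$ is strictly positive on non-constant monomials) and $\nu(u_j)=1$ for every $j$, i.e. $u_j\in G(I_0)$; but then $w\in JI_0^{k-1}$, contradicting the previous step. In particular $w\notin G(JI^{k-1})$.

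It remains to see $w\in G(I^k)$. If not, write $w=m\,w'$ with $w'\in G(I^k)$ and $\deg m>0$; then $w'=u_1\cdots u_k$ with $u_j\in G(I)$, so $\nu(w)=\nu(m)+\sum_j\nu(u_j)\ge\nu(m)+k>k$, again contradicting $\nu(w)=k$. Hence $w\in G(I^k)\setminus G(JI^{k-1})$, which is the asserted inclusion. For the last assertion, put $r=r(I)$; then $I^{r+1}=JI^r$ gives $G(I^{r+1})\setminus G(JI^r)=\emptyset$, so by the inclusion with $k=r+1$ we get $G(I_0^{r+1})\setminus G(JI_0^r)=\emptyset$, i.e. every minimal generator of $I_0^{r+1}$ lies in $JI_0^r$; combined with the automatic reverse inclusion this yields $I_0^{r+1}=JI_0^r$, so $r(I_0)\le r=r(I)$.

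There is no genuine difficulty here: the argument is entirely bookkeeping with the additive function $\nu$. The one point requiring care is the repeated use of ``no extra factor'' — that if a monomial is divisible by a product whose $\nu$-value already equals its own, it must coincide with that product, and that $G(I_0^k)$ consists of honest products of $k$ generators of $I_0$ (of $\nu$-value exactly $k$) rather than of proper divisors of such. Both facts rest on $\nu$ being strictly positive on every non-constant monomial, namely $\nu(x^cy^d)=(bc+ad)/ab>0$ whenever $(c,d)\ne(0,0)$, which I would isolate and state explicitly before invoking it.
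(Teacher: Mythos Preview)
Your argument is correct and rests on the same key observation as the paper's proof: a monomial in $G(I_0^k)$ has $\nu$-value exactly $k$, whereas any monomial in $JI^{k-1}$ that uses a generator of $I$ off the line $\nu=1$ (or carries an extra nontrivial factor) would have $\nu$-value strictly greater than $k$. The paper packages this via the decomposition $I=I_0+I_1$ and a binomial expansion of $JI^{k-1}$, while you argue directly with a putative factorization; if anything your write-up is more complete, since you explicitly verify $w\in G(I^k)$ and carefully upgrade $w\notin G(JI_0^{k-1})$ to $w\notin JI_0^{k-1}$, steps the paper leaves implicit.
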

\begin{proof}
Let $u\in G(I_0^k)$ and suppose that $u\in G(JI^{k-1})$. We have to show that $u\in G(JI_0^{k-1})$. Assume $u\notin G(JI_0^{k-1})$. We can write $I=I_0+I_1$, where $I_1$ is generated by the monomials $x^cy^d\in G(I)$ such that $bc+ad> ab$, then
\[
JI^{k-1}=J(I_0+I_1)^{k-1}=JI_0^{k-1}+ JI_0^{k-2}I_1+\cdots +JI_1^{k-1}.
\]
This implies that $G(JI^{k-1})\subseteq \Union_{i=0}^{k-1} G(JI_0^{k-i-1}I_1^i)$. Thus it suffices to show that $u\notin G(JI_0^{k-i}I_1^i)$, for all $i\geq 1$.

Since $u\in G(I_0^k)$, it follows that $\nu(u)=k$. Suppose there exist some $i\geq 1$ such that $u\in JI_0^{k-i}I_1^i$. Then, by using again  (i) and (ii), it follows that $\nu(u)>k$, a contradiction.
\end{proof}

In the following special case,  the reduction number of $I$ is determined by  the reduction number of its quasi-equigenerated part.

\begin{Theorem}
\label{onion}
Let $1\leq a\leq b$ be integers and $g=\gcd(a,b)$. Furthermore, let  $A\subset [0,g]$ with $\{0,g\}\subseteq A$.  Let $r\in \RR,\;  r\geq 1$ and  $I=I_A+I'$, where $I'=(u\:\;  \nu(u)>r)$.

\medskip
Then $I_A$ is the quasi-equigenerated part of $I$,  and
\begin{enumerate}
\item[(a)] $r(I)=1$,  if \;   $r(I_A)=0$;

\item[(b)] $r(I)= r(I_A)$,  if\;  $r(I_A)>0$.
\end{enumerate}
In particular, $r(I)\leq g/\gcd(A)-|A|+2$.
\end{Theorem}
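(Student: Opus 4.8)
The plan is to establish the two inequalities $r(I_A)\le r(I)$ and $r(I)\le\max\{r(I_A),1\}$; since $r(I_A)=0$ is equivalent to $I_A=(x^a,y^b)=J$ (equivalently $A=\{0,g\}$), these two bounds immediately give (a) and (b), after which Theorem~\ref{GLP} supplies the final estimate. First I would check that $I_A$ really is the quasi-equigenerated part $I_0$ of $I$. If $v\in G(I)$ has $\nu(v)=1$, then its exponent vector lies on the segment $L_0$, so by \eqref{hand} $v=u_i:=x^{i a/g}y^{b-i b/g}$ for some $i\in[0,g]$; every monomial of $I'$ has $\nu$-value $>r\ge1$, so $v\in I_A$, and since the $u_i$ are pairwise incomparable for divisibility this forces $i\in A$. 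Conversely each $u_i$ with $i\in A$ is a minimal generator of $I$ of $\nu$-value $1$, because a generator of $I'$ (which has $\nu$-value $>1$) cannot divide it. Hence $I_0=I_A$, and $r(I_A)\le r(I)$ follows from Proposition~\ref{turtle}. Note also that every $w\in G(I)\setminus G(I_A)$ has $\nu(w)>1$, hence lies in $I'$, so that $\nu(w)>r\ge1$.

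For the upper bound, set $k=\max\{r(I_A),1\}\ (\ge1)$ and show $I^{k+1}=JI^k$. As $JI^k\subseteq I^{k+1}$ always, it suffices to prove that $v_1\cdots v_{k+1}\in JI^k$ for all $v_1,\dots,v_{k+1}\in G(I)$. If all $v_j\in G(I_A)$, then $v_1\cdots v_{k+1}\in I_A^{k+1}=JI_A^k\subseteq JI^k$, using $k\ge r(I_A)$. Otherwise relabel so that $v_1\in G(I)\setminus G(I_A)$. Then $\nu(v_1 v_2)=\nu(v_1)+\nu(v_2)>r+1\ge2$, because $\nu(v_1)>r$ and $\nu(v_2)\ge1$; since any monomial $x^Cy^D$ with $bC+aD>2ab$ has $C\ge a$ or $D\ge b$, we get $v_1 v_2\in J$, say $v_1 v_2=zm'$ with $z\in\{x^a,y^b\}$ and $m'$ a monomial. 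The cofactor satisfies $\nu(m')=\nu(v_1 v_2)-1>r$, hence $m'\in I'\subseteq I$. As $v_3,\dots,v_{k+1}$ are $k-1$ elements of $I$, we get $m'\,v_3\cdots v_{k+1}\in I\cdot I^{k-1}=I^k$, and therefore $v_1\cdots v_{k+1}=z\cdot(m'\,v_3\cdots v_{k+1})\in JI^k$, as desired. Thus $r(I)\le k$.

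Combining the two inequalities: in case (b), $k=r(I_A)$, so $r(I)=r(I_A)$; in case (a), $r(I_A)=0$ and $k=1$ give $r(I)\le1$, while $I\supsetneq I_A=J$ (i.e. $I'\not\subseteq J$) forces $r(I)\ge1$, hence $r(I)=1$. For the last assertion, Theorem~\ref{GLP} gives $r(I_A)\le g/\gcd(A)-|A|+2$; if $r(I_A)\ge1$ this bounds $r(I)=r(I_A)$, and if $r(I_A)=0$ then $A=\{0,g\}$, so the right-hand side equals $1=r(I)$. The step I expect to require the most care is the pairing move in the second paragraph: when only one factor $v_1$ lies outside $G(I_A)$, one pairs it with an arbitrary other factor $v_2$ (this is where $k\ge1$ is needed so that $v_2$ exists), and the hypothesis $r\ge1$ is exactly what guarantees that the resulting cofactor $m'$ still satisfies $\nu(m')>r$, so that $m'$ re-enters $I'$ — without $r\ge1$ the cofactor could leave $I$ entirely. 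One should also note that in (a) the value $r(I)=1$ (rather than $0$) presupposes $I\ne J$, equivalently $I'\not\subseteq J$.
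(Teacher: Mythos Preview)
Your proof is correct and follows essentially the same approach as the paper: both arguments use Proposition~\ref{turtle} for the lower bound and the key ``pairing move'' for the upper bound, namely that a product of two factors with total $\nu$-value exceeding $2$ can be written as an element of $J$ times a monomial whose $\nu$-value still exceeds $r$, hence lies back in $I'$. Your presentation is in fact slightly cleaner than the paper's: you unify cases (a) and (b) via $k=\max\{r(I_A),1\}$, you verify explicitly that $I_A=I_0$, and you pair the ``bad'' factor $v_1$ with an arbitrary $v_2\in G(I)$ rather than insisting (as the paper does) on pairing one factor from $I_0$ with one from $I'$, which avoids having to treat separately the situation where all factors come from $I'$.
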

\begin{proof}
 (a)   We must show  $I^2 \subseteq JI$. We know that $I^2=(I_A+I')^2= (I_A)^2+I_AI' +I'^2$ and $JI=JI_A+JI'$.  The condition $r(I_A)=0$ means that $I_A=J$. Hence it is sufficient to show $I'^2\subseteq JI'$.  Let $u\in I'^2$. Then  $\nu (u)>2r\geq 2$.  Hence if $u=x^cy^d$, then $c\geq a$ or $d\geq b$. We may assume $c\geq a$. Then $u=x^au'$, where $u'$ is a monomial with $\nu(u')>2r-1\geq r$. Therefore $u'\in I'$ and $u\in JI'$.

 (b) With notation of Proposition~\ref{turtle} we have $I_0=I_A$ and $r(I_0)\leq r(I)$. So it is sufficient to show that $r(I_0)\geq r(I)$.
Suppose that $r(I_0)=m>0$. We want to show $I^{m+1}\subseteq JI^m$. Indeed, let $w\in I^{m+1}$. Then $w=u_1\cdots u_kv_1\cdots v_{m+1-k}$ with $u_i\in I_0$ and $v_j\in I'$. If $k=m+1$, then the assertion follows, since $I_0^{m+1}=JI_0^m$. Now let $k<m+1$. Since $\nu(u_kv_1)>1+r\geq 2$,  it follows from the proof of part (a), that $u_kv_1=w_1w_2$, where $w_1\in J$ and $w_2\in I'$. Therefore,  $w=w_1(u_1\cdots u_{k-1}w_2v_2\cdots v_{m+1-k})$ belongs to $JI^m$, as desired.
\end{proof}

Observe that the ideals in Theorem~\ref{onion} are lex ideals if $b>a$. Recall that a monomial ideal $I$ is called a {\em lex ideal}, if and only if for  all monomial  $u\in I$ and  all monomials $v$ with $\deg(v)=\deg(u)$ for which $v>u$ with respect to the lexicographical order, it follows that $v\in I$.

Indeed, let $I$ be as in  Theorem~\ref{onion} with $r=1$, and let  $u\in I$ and a monomial $v$ with $\deg(v)=\deg(u)$ and $v>u$.  Then  $\nu(v)> \nu(u)$, because $b>a$. This shows that $v\in I$, and hence $I$ is a lex ideal. In particular,   Theorem~\ref{onion} shows that  lex ideals may have any  reduction number $\geq 1$.

\section{Monomial ideals of reduction number $1$}
\label{unique}

In Section~\ref{value} we classified the quasi-equigenerated monomial ideals with  reduction number $1$. Here we consider more generally  monomial ideals   $I\in \Ic_{a,b}$   with $1\leq a\leq b$, and analyze what it means that $r(I)=1$. We have the following simple observation.

\begin{Lemma}
\label{obvious}
One has $r(I)=1$,  if and only if $I\neq J$,  and  for all monomials  $u_p,u_q\in I\setminus J$ with $u_pu_q=x^cy^d$ and $(c,d)\not\geq  (a,b)$, we have $x^{c-a}y^d\in I$ if $c\geq a$, or  $x^cy^{d-b}\in I$ if $d\geq b$.
\end{Lemma}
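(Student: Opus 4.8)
\textbf{Proof plan for Lemma~\ref{obvious}.}
The plan is to use the characterization $r(I)=1$ iff $I\neq J$ and $I^2=JI$, and to unwind the latter equality at the level of monomial generators. First I would recall, exactly as in the proof of Lemma~\ref{stapelbed} (applied with $r=1$) and of Proposition~\ref{characterize}, that checking $I^2\subseteq JI$ only requires checking membership of monomial generators of $I^2$ in $JI$. A monomial generator of $I^2$ is a product $u_pu_q$ with $u_p,u_q\in G(I)$; if either $u_p\in J$ or $u_q\in J$ — say $u_p=x^a$ or $u_p=y^b$ — then $u_pu_q\in JI$ trivially, so the only products that need attention are those with both $u_p,u_q\in I\setminus J$. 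Similarly, if $u_pu_q=x^cy^d$ with $(c,d)\geq(a,b)$, then $x^cy^d=x^a\cdot x^{c-a}y^d$ with $x^{c-a}y^d\in I$ (since $I\supseteq J\ni x^a$ forces $x^{c-a}y^d$ to be a multiple of... — more simply $x^{c-a}y^d$ is divisible by $y^b$, hence lies in $J\subseteq I$), so again $u_pu_q\in JI$ automatically. Thus the content of $I^2\subseteq JI$ is precisely the stated condition on the "borderline" products $u_pu_q=x^cy^d$ with $(c,d)\not\geq(a,b)$.

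Next I would carry out the key reduction step. Since $\nu(u_p),\nu(u_q)\geq 1$ for generators of $I\in\Ic_{a,b}$, and since $u_p,u_q\notin J$ means neither is divisible by $x^a$ or $y^b$, we have $\nu(u_pu_q)=\nu(u_p)+\nu(u_q)\geq 2$, i.e. $bc+ad\geq 2ab$. Geometrically this forces $c\geq a$ or $d\geq b$ (if $c<a$ and $d<b$ then $bc+ad<2ab$), and because $(c,d)\not\geq(a,b)$ exactly one of these holds; say $c\geq a$ (the case $d\geq b$ is symmetric, interchanging the roles of $x$ and $y$). Now $x^cy^d\in JI$ iff $x^cy^d$ is divisible by $x^a w$ or $y^b w$ for some $w\in G(I)$. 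Since $d<b$, divisibility by $y^b w$ is impossible; so $x^cy^d\in JI$ iff $x^{c-a}y^d$ is divisible by some $w\in G(I)$, i.e. iff $x^{c-a}y^d\in I$. This is exactly the condition in the statement, and running the argument over all such pairs $u_p,u_q$ establishes the equivalence $I^2=JI\iff$ (the stated monomial condition), which together with $I\neq J$ is the definition of $r(I)=1$.

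The main obstacle — really the only point requiring care — is the bookkeeping around the hypothesis $(c,d)\not\geq(a,b)$ in the partial order used here: one must confirm that it is used consistently to guarantee that exactly one of $c\geq a$, $d\geq b$ occurs, so that the reduction "peel off $x^a$" (resp. "peel off $y^b$") lands back inside $I$ rather than inside a region where membership is automatic; and one must check the edge cases where $c=a$ exactly (then $x^{c-a}y^d=y^d$, and $y^d\in I$ iff $d\geq$ the smallest $y$-exponent of a pure power in $I$, which is $b$, consistent with $d<b$ meaning $y^d\notin I$). I expect no genuine difficulty beyond this; the whole argument is a direct translation of $I^2\subseteq JI$ into divisibility of monomials, with the $\nu$-additivity supplying the forcing $c\geq a$ or $d\geq b$.
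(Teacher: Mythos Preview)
Your proposal is correct and follows essentially the same route as the paper's proof: reduce $r(I)=1$ to $I^2=JI$, discard products involving a factor in $J$ and products with $(c,d)\geq(a,b)$ (the paper phrases this as $u_pu_q\in J^2\subset JI$), use $\nu$-additivity to force $c\geq a$ or $d\geq b$, and then observe that with $d<b$ one has $x^cy^d\in JI\iff x^{c-a}y^d\in I$. Your edge-case worry about $c=a$ is in fact vacuous, since $c_1+c_2=a$ together with $\nu(u_p),\nu(u_q)\geq 1$ forces $d_1+d_2\geq b$; but this does not affect the argument.
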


\begin{proof}
We have $r(I)=1$, if and only if $I^2=JI$. This is the case if and only if for all monomials  $u_p,u_q\in I\setminus J$, it follows that $u_pu_q\in JI$.

If $p+q\geq (a,b)$, then $u_pu_q\in J^2\subset JI$.  Suppose now that $p+q\not\geq (a,b)$. Then $c\geq a$ or $d\geq b$, because $cb+ad\geq 2ab$. Assume that $c\geq a$. Then $d<b$, since $p+q\not\geq (a,b)$, and  $u_pu_q=x^au$ where  $u=x^{c-a}y^d$. Hence $u_pu_q\in JI$  if and only if $u\in I$.
\end{proof}

\begin{Corollary}
\label{minimal}
Let $I\in \Ic_{a,b}$ with  $1<a\leq b$.  We assume that $I\neq (x^a,y^b)$.
\begin{enumerate}
\item[(a)]  Let $L_1, L_2\subset \Ic_{a,b}$ be monomial ideals such that  $I\subseteq L_1, L_2$ and $r(L_1)=r(L_2)=1$. Then $r(L_1\sect L_2)=1$.
\item[(b)] There exists a unique monomial ideal $L\subset \Ic_{a,b}$  with $I\subseteq L$ and $r(L)=1$ and such that $r(L')>1$ for all $L'$ with
$I\subseteq L'\subset L$.
\end{enumerate}
\end{Corollary}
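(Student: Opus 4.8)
The plan is to deduce everything from Lemma~\ref{obvious}, which gives a purely combinatorial criterion for reduction number $1$ in terms of a ``closure'' condition on the exponent set. First I would prove part (a). Suppose $I\subseteq L_1,L_2$ with $r(L_1)=r(L_2)=1$, and set $L=L_1\cap L_2$. Since $I\subseteq L$ and $I\neq (x^a,y^b)$, we have $L\neq (x^a,y^b)=J$, so the first condition of Lemma~\ref{obvious} holds. For the second, take monomials $u_p,u_q\in L\setminus J$ with $u_pu_q=x^cy^d$ and, say, $c\geq a$ (and $(c,d)\not\geq(a,b)$). Then $u_p,u_q\in L_1\setminus J$ and $u_p,u_q\in L_2\setminus J$, so applying Lemma~\ref{obvious} to $L_1$ gives $x^{c-a}y^d\in L_1$, and applying it to $L_2$ gives $x^{c-a}y^d\in L_2$; hence $x^{c-a}y^d\in L_1\cap L_2=L$. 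The case $d\geq b$ is symmetric. By Lemma~\ref{obvious}, $r(L)=1$.

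For part (b), I would first observe that the set
\[
\mathcal{S}=\{L\in\Ic_{a,b}\:\; I\subseteq L,\ r(L)=1\}
\]
is nonempty: for instance the lex-type ideal $I+(u\:\;\nu(u)>1)$, or even just a large enough power-like ideal, lies in $\mathcal{S}$; concretely one can take $L=(x^a,y^b)+(x^cy^d\:\; c\geq a-1,\ d\geq b-1)$, which contains $I$ and has reduction number $1$ by Corollary~\ref{smallbig} (or directly by Lemma~\ref{obvious}). Since all ideals in $\mathcal{S}$ live inside the fixed finitely generated module $K[x,y]/(x^a,y^b)$ worth of monomials — more precisely, every $L\in\mathcal{S}$ is determined by a finite ``staircase'' region in $H^+_{a,b}$ — the family $\mathcal{S}$ is finite, hence contains minimal elements with respect to inclusion. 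Let $L$ be such a minimal element; by minimality, every monomial ideal $L'$ with $I\subseteq L'\subsetneq L$ satisfies $r(L')\neq 1$, and since $r(L')\geq 1$ always (as $L'\supseteq I$ forces $L'\neq J$ unless $L'=J\not\supseteq I$), we get $r(L')>1$.

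It remains to prove uniqueness, and this is the one point that needs part (a). Suppose $L$ and $\tilde L$ are both minimal in $\mathcal{S}$. By part (a), $L\cap\tilde L\in\mathcal{S}$, i.e.\ $I\subseteq L\cap\tilde L$ and $r(L\cap\tilde L)=1$. But $L\cap\tilde L\subseteq L$, so by minimality of $L$ we must have $L\cap\tilde L=L$, i.e.\ $L\subseteq\tilde L$; symmetrically $\tilde L\subseteq L$, whence $L=\tilde L$. Thus the minimal element is unique, and it is exactly the ideal described in the statement. The main obstacle — really the only non-formal ingredient — is verifying the closure condition of Lemma~\ref{obvious} survives intersection in part (a); once that is in hand, (b) is the standard ``intersection of all members of a nonempty finite intersection-closed family'' argument.
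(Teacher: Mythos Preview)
Your proof of (a) is correct and identical to the paper's: both verify the closure criterion of Lemma~\ref{obvious} for $L_1\cap L_2$ by applying it separately to $L_1$ and to $L_2$.

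For (b), your strategy---nonemptiness of $\mathcal S$ plus closure under intersection gives a unique minimum---is also the paper's; the paper simply intersects all members of $\mathcal S$ at once rather than first picking a minimal element, but this is cosmetic. However, your nonemptiness argument has a gap: neither candidate works. The ideal $(x^a,y^b)+(x^cy^d : c\geq a-1,\ d\geq b-1)$ equals $(x^a,y^b,x^{a-1}y^{b-1})$, which need not contain $I$; for example $I=(x^3,y^3,xy^2)\in\Ic_{3,3}$ is not contained in $(x^3,y^3,x^2y^2)$. And $I+(u:\nu(u)>1)$ need not have reduction number $1$: writing $I_0$ for the quasi-equigenerated part of $I$, this ideal equals $I_0+(u:\nu(u)>1)$, and Theorem~\ref{onion} gives its reduction number as $r(I_0)$, which can exceed $1$ (take $I=(x^4,y^4,xy^3)$, where $r(I_0)=3$ by Proposition~\ref{masiproves}).

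The fix is easy: take instead $L=(u:\nu(u)\geq 1)$, the ideal generated by \emph{all} monomials in $H^+_{a,b}$. It contains $I$ trivially, lies in $\Ic_{a,b}$, and has $r(L)=1$ by Lemma~\ref{obvious}: for $u_p,u_q\in L\setminus J$ with $u_pu_q=x^cy^d$ and $c\geq a$, one has $\nu(x^{c-a}y^d)=\nu(u_p)+\nu(u_q)-1\geq 1$, hence $x^{c-a}y^d\in L$. (The paper invokes Theorem~\ref{redone} for nonemptiness, which is itself not quite apt since that theorem only produces quasi-equigenerated ideals; the same fix applies there.)
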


\begin{proof}
(a) Let $J=(x^a,y^b)$.   Since $I\subseteq L_1, L_2$ it follows that $J\neq L_1, L_2$. Now let $u_p,u_q\in (L_1\sect L_2)\setminus J$ with $p+q=(c,d)\not\geq  (a,b)$. Then $u_p, u_q\in L_1\setminus J$. Since $r(L_1)=1$, Lemma~\ref{obvious} implies that $x^{c-a}y^d\in L_1$ if $c\geq a$, or  $x^cy^{d-b}\in L_1$ if $d\geq b$.  We may assume that $c\geq a$. Then $x^{c-a}y^d\in L_1$. By the same reason, since $r(L_2)=1$, we have $x^{c-a}y^d\in L_2$. Therefore, $x^{c-a}y^d\in L_1\sect L_2$, Similarly, if  $d\geq b$, we see that $x^cy^{d-b}\in L_1\sect L_2$. Therefore $r(L_1\sect L_2)=1$, by  Lemma~\ref{obvious}.

(b) By Theorem \ref{redone}, there exists a monomial ideal $L\supseteq I$ with
$r(L) = 1$. Let $\mathcal{L}$ be the set of all monomial ideals $L$ with $I\subseteq L$ and $r(L)=1$. By (a),   $L_0=\Sect_{L\in \mathcal{L}}L$ has reduction number $1$, and of course it is the unique smallest ideal with this property containing $I$.
\end{proof}

\begin{Examples}
\label{telegram}
{\em
(a) Let $I= (x^a,y^b, \{u_p\:\; p\in D\})$, where $D\subseteq \{p\:\; p\geq (a/2,b/2)\}$. Then $r(I)=1$. Indeed, $p+q\geq (a,b)$ for all $p,q$, and the assertion follows by Lemma~\ref{obvious}.

(b) Let $I_1=(x^4,y^8,x^3y^3)$ with $u_{p_1}=x^3y^3$. It can be seen that $\nu(u_{p_1})\geq 1$ and $r(I_1)=2$. We want to build the smallest  monomial ideal with reduction number $1$ containing $I$. At first, let $u_2= u_{p_1}^2=x^6y^6\in I_1^2$. We observe that $u_2=x^4(x^2y^6)$. Therefore, any ideal of  reduction number $1$ containing $I_1$ must contain $x^2y^6$.  Now consider $I_2=(x^4,y^8,u_{p_1}, u_{p_2})$,  where $u_{p_2}=x^2y^6$.  Then it can be checked that  $r(I_2)=1$, as desired.
}
\end{Examples}
In the next theorem we describe for any $3$-generated monomial ideal $I\subset K[x,y]$ for which $(x^a,y^b)$ is the monomial reduction ideal,  the unique smallest  monomial ideal $L$ with $I\subseteq L$ and $r(L)=1$.

\medskip
Let $1< a\leq b$ be integers. We  let $D_{a,b}=H_{a,b}^+\sect \{(i,j)\:\; i<a,\; j<b\}$.

\begin{Theorem}
\label{wetransfer}
 Let $1< a\leq b$ be integers,   and let  $p=(c,d)\in D_{a,b}$ and  $I=(x^a,y^b, u_p)$.  For all integers $i\geq 1$ we write
\[
ic=r_ia+c_i,  \quad id=s_ib+d_i,
\]
with $r_i,s_i,c_i,d_i$ integers such that   $0\leq c_i<a$  and $0\leq d_i<b$. We set $p_i=(c_i,d_i)$ for all $i\geq 1$. Let $k$  be the smallest integer such that $r_k+s_k\geq k$. Then $L=(x^a,y^b, u_{p_1}, \ldots,u_{p_{k-1}})$ is the unique  smallest monomial ideal containing $I$ such that $r(L)=1$.
\end{Theorem}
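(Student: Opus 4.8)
Throughout write $J=(x^a,y^b)$ and, for $i\ge 1$, set $f(i)=i-r_i-s_i$. Everything rests on the identity $u_p^i=(x^a)^{r_i}(y^b)^{s_i}u_{p_i}$, immediate from $ic=r_ia+c_i$ and $id=s_ib+d_i$. Since $\nu$ is additive with $\nu(x^a)=\nu(y^b)=1$, this gives $\nu(u_{p_i})=i\nu(u_p)-r_i-s_i=(\nu(u_p)-1)i+f(i)\ge f(i)$. A short divisibility check shows that $u_p^i\in J^i$ if and only if $r_i+s_i\ge i$, so by Proposition~\ref{characterize} the least such $i$ is $k=r(I)+1$; in particular $r_i+s_i\le i-1$, hence $\nu(u_{p_i})\ge 1$ and $0<c_i<a$, $0<d_i<b$ for $1\le i\le k-1$. (Thus $u_{p_1}=u_p$, $u_p\in L\setminus J$ and $L\in\Ic_{a,b}$.) I also record the \emph{carry identities}: comparing exponents in $u_{p_i}u_{p_j}=x^{c_i+c_j}y^{d_i+d_j}$ with $ic+jc$ and $id+jd$ gives $u_{p_i}u_{p_j}=(x^a)^{\varepsilon}(y^b)^{\delta}u_{p_{i+j}}$ with $\varepsilon=1$ iff $c_i+c_j\ge a$ and $\delta=1$ iff $d_i+d_j\ge b$, and correspondingly $f(i+j)=f(i)+f(j)-\varepsilon-\delta$.

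The key step is the lemma that $f(i)=1$ (i.e. $r_i+s_i=i-1$) for all $1\le i\le k-1$. Indeed $f(1)=1$ since $r_1=s_1=0$; and for $2\le i\le k-1$ the carry identity for $u_{p_{i-1}}u_{p_1}$ gives $f(i)=f(i-1)+1-(\varepsilon+\delta)$, where $\varepsilon+\delta\ge 1$ because $\nu(u_{p_{i-1}}u_p)=\nu(u_{p_{i-1}})+\nu(u_p)\ge 2$, whereas a monomial with $x$-exponent $<a$ and $y$-exponent $<b$ has $\nu<2$. Hence $f(i)\le f(i-1)$, and since also $f(i)\ge 1=f(1)$ the sequence $f(1),\dots,f(k-1)$ is constant equal to $1$.

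Next I would prove $r(L)=1$ by checking $L^2=JL$, i.e. that every product of two of the generators $x^a,y^b,u_{p_1},\dots,u_{p_{k-1}}$ lies in $JL$; the only nontrivial products are $u_{p_i}u_{p_j}$ with $1\le i,j\le k-1$. By the lemma $\nu(u_{p_i}u_{p_j})\ge 2$, so $\varepsilon+\delta\ge 1$. If $\varepsilon=\delta=1$ then $u_{p_i}u_{p_j}\in J^2\subseteq JL$. If $\varepsilon=1,\delta=0$ (the other single-carry case being symmetric) then $u_{p_i}u_{p_j}=x^a u_{p_{i+j}}$ and $f(i+j)=1$; I must check $u_{p_{i+j}}\in L$. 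If $i+j\le k-1$ this holds by definition of $L$; otherwise $i+j\ne k$ (as $f(k)\le 0$), so $i+j=k+t$ with $1\le t\le k-2$, and comparing the two sides of $u_p^{k+t}=u_p^{t}u_p^{k}$ shows $u_{p_t}u_{p_k}=(x^a)^{\delta^x}(y^b)^{\delta^y}u_{p_{i+j}}$ with $\delta^x+\delta^y=f(t)+f(k)-f(i+j)=f(k)\le 0$, forcing $\delta^x=\delta^y=0$; then $u_{p_{i+j}}=u_{p_t}u_{p_k}$ is a multiple of $u_{p_t}\in L$. So $u_{p_i}u_{p_j}\in x^aL\subseteq JL$, and since $L\ne J$ we get $r(L)=1$.

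For minimality, let $M\in\Ic_{a,b}$ with $I\subseteq M$ and $r(M)=1$, so $M^2=JM$; I claim $u_{p_i}\in M$ for $1\le i\le k-1$, which gives $L\subseteq M$ and, with Corollary~\ref{minimal}, identifies $L$ as the unique smallest such ideal. For $i=1$, $u_{p_1}=u_p\in I\subseteq M$. For $i\ge 2$, $u_{p_{i-1}}\in M$ by induction, so $u_{p_{i-1}}u_p\in M^2=JM$; by the lemma the carry count for $u_{p_{i-1}}u_{p_1}$ is $\varepsilon+\delta=f(i-1)+1-f(i)=1$, say $u_{p_{i-1}}u_p=x^a u_{p_i}=x^{a+c_i}y^{d_i}$. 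Since its $y$-exponent $d_i<b$, this monomial of $JM=(x^a,y^b)M$ must be divisible by $x^a m$ for some $m\in G(M)$, so $m\mid u_{p_i}$ and $u_{p_i}\in M$ (the case $\delta=1$ is symmetric, using $c_i<a$). The step that needs a genuine idea is this lemma, together with the point just used in the $r(L)=1$ argument that an $x$-carry-only product $u_{p_i}u_{p_j}$ reproduces an earlier $u_{p_t}$ even when $i+j\ge k$; granted these, both halves of the theorem reduce to elementary bookkeeping with the carries of $ic$ and $id$.
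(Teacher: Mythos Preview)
Your proof is correct and follows the same two–step plan as the paper (first show $r(L)=1$, then show $L$ is contained in every $M\supseteq I$ with $r(M)=1$), but your organization via the function $f(i)=i-r_i-s_i$ and the carry identity $f(i+j)=f(i)+f(j)-\varepsilon-\delta$ streamlines both steps. The paper proves $r_i+s_i\ge i-1$ by a direct estimate with $\nu$ and then, for the case $i+j\ge k$, establishes a general claim $(*)$ (that $u_{p_h}\in L$ whenever $h\ge k$ and $r_h+s_h=h-1$) by an induction along the increasing sequence of such $h$, each time writing $h=k+l$ and possibly recursing on $l$. You bypass this induction entirely: since $i,j\le k-1$ one has $i+j\le 2k-2$, so $t=i+j-k\le k-2$, and your carry count $\delta^x+\delta^y=f(t)+f(k)-f(i+j)=f(k)\le 0$ forces $u_{p_{i+j}}=u_{p_t}u_{p_k}$, a multiple of a listed generator of $L$. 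In the minimality step the paper argues by an explicit inequality that $(c_1+c_{i-1},d_1+d_{i-1})\not\ge(a,b)$, whereas you read off $\varepsilon+\delta=f(i-1)+f(1)-f(i)=1$ directly from your lemma; this is the same conclusion obtained more transparently.
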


\begin{proof}
In the first step we  show that $r(L)=1$.

First we claim that
\begin{eqnarray}
\label{i-1}
r_i+s_i\geq i-1 \quad \text{for all}\quad i\geq 1.
\end{eqnarray}
For proof of claim, we consider $ibc=r_iab+c_ib$ and  $iad=s_iba+d_ia$. Now, since $u_p\in D_{a,b}$, we see that  $iab\leq i(cb+ad)=(r_i+s_i)ab+(c_ib+d_ia)$.  Because $0\leq c_i< a$ and $0\leq d_i< b$,   it follows that $(r_i+s_i)ab\geq iab-(c_ib+d_ia)>(i-2)ab$. It means $r_i+s_i\geq i-1$,  and the  claim is proved.

Next we show that $p_i\in D_{a,b}$ for $i=1,\ldots,k-1$. Indeed, for $i=1,\ldots,k-1$ we have $r_i+s_i=i-1$, by (\ref{i-1}) and the definition of $k$. Therefore, the equations $ic=r_ia+c_i$ and $id=s_ib+d_i$ imply that $i(cb+ad)=(i-1)ab+c_ib+d_ia$. Since $p=(c,d)\in D_{a,b}$,  it follows that $cb+ad\geq ab$. Hence
$c_ib+d_ia= i(cb+ad)-(i-1)ab\geq ab$. Since $c_i<a$ and $d_i<b$, we see that $p_i\in D_{a,b}$, as desired.

Now we prove that $u_{p_i}u_{p_j}\in JL$  for $i,j\leq k-1$. This then shows that $r(L)=1$.

We have
\[ (i+j)c=(r_i+r_j)a+(c_i+c_j),\quad \text{and} \quad (i+j)d=(s_i+s_j)b+(d_i+d_j).
\]
By Lemma~\ref{obvious} we may assume $(c_i+c_j, d_i+d_j)\not\geq (a,b)$. Then   $c_i+c_j\geq a $ and $d_i+d_j< b$,  or $c_i+c_j< a $ and $d_i+d_j\geq  b$, because $p_i\in D_{a,b}$ for $i=1,\ldots.k-1$. Let us  assume that $c_i+c_j\geq a $ and $d_i+d_j< b$.
Because $c_i+c_j\geq a $ we can write $(i+j)c=(r_i+r_j+1)a+(c_i+c_j-a)$ with $0\leq c_i+c_j-a<a$.  Therefore, $p_{i+j}= (c_i+c_j-a,  d_i+d_j)$, and $u_{p_i}u_{p_j}=x^au_{p_{i+j}}$. So,  if $i+j<k$, then $u_{p_{i+j}}\in L$, and hence $u_{p_i}u_{p_j}\in JL$.

Now let $i+j\ge k$. By  (\ref{i-1}) and the definition of $k$,  it follows $r_i+s_i=i-1$ and $r_j+s_j=j-1$.  and hence $r_i+r_j+1+s_i+s_j=i+j-1$.  Thus it suffice to prove the following claim
\begin{eqnarray*}
\text{$(*)$ Let $h\geq k$ and $r_h+s_h=h-1$. Then $u_{p_h}\in L$}.
\end{eqnarray*}

Let $h=k+l$. Since $r_k+s_k\geq k$, it follows that $l>0$. we have

\begin{eqnarray}
\label{summer}
  h c =(r_k+r_l)a+(c_k+c_l),
 \quad    hd=(s_k+s_l)b+(d_k+d_l),
\end{eqnarray}
and on the other hand
\begin{eqnarray}
\label{water}
hc=r_ha+c_h, \quad  hd=s_hb+d_h.
\end{eqnarray}
Comparing (\ref{summer}) with (\ref{water}), we see that
 \begin{eqnarray*}
\label{d}
r_k+r_l\leq r_h \quad \text{and}\quad s_k+s_l\leq s_h.
\end{eqnarray*}

Therefore,
\begin{eqnarray*}
\label{equall}
h-1=r_h+s_h\geq (r_k+s_k)+(r_l+s_l)\geq k+l-1=h-1,
\end{eqnarray*}
because $r_k+s_k\geq k$ and $r_l+s_l\geq l-1$,  due to our assumption on $k$  and due to (\ref{i-1}).  This  implies that
\begin{eqnarray}
\label{goood}
r_k+s_k= k \quad \text{and}\quad r_l+s_l= l-1.
\end{eqnarray}
We also observe that
\begin{eqnarray}
\label{verygoood}
u_{p_h}=u_{p_k}u_{p_l}.
\end{eqnarray}
Indeed, $u_{p_k}u_{p_l}=x^{c_k+c_l}y^{d_k+d_l}$ and $u_{p_h}=x^{c_h}y^{d_h}$. We have $c_h= c_k+c_l$ and $d_h=d_k+d_l$, if and only if $c_k+c_l<a$
and $d_k+d_l<b$. Suppose $c_k+c_l\geq a$ or  $d_k+d_l\geq b$. We may assume that $c_k+c_l\geq a$. Then
\[
hc=(r_k+r_l+1)a+c_k+c_l-a.
\]
As before, $r_k+r_l+1\leq r_h$ and $s_r+s_l\leq s_h$. Therefore,
\[
h=k+l=(r_k+r_l+1)+(s_r+s_l)\leq r_h+s_h=h-1,
\]
a contradiction.

Let $\{h_1,h_2,\ldots, \}$ be the set of integers $>k$ with the property that $h_1<h_2<\cdots $ and  $r_{h_i}+s_{h_i}=h_i-1$ for all $i$.

Now we prove $(*)$ by induction on $i$. Let $h_1=k+l_1$. Then $r_{l_1}+s_{l_1}= l_1-1$, by   (\ref{goood}). Since $h_1$ is the smallest element $\geq k$ for which such an equation holds, it follows that $l_1<k$. Therefore,  $u_{p_{l_1}}\in L$. Thus (\ref{verygoood}) implies that  $u_{p_{h_1}}\in L$

Now let $i>1$, then $l_i<k$  or $l_i>k$. If $l_i<k$, then as before we have that  $u_{p_{h_i}}\in L$, and if $l_i>k$, then there exists $j<i$ such that $h_j=l_i$ because $l_i<h_i$ and $r_{l_i}+s_{l_i}=l_i-1$. By induction hypothesis it follows that $u_{p_{l_i}}=u_{p_{h_j}}\in L$. Again by using (\ref{verygoood}), we conclude that $u_{p_{h_i}}\in L$.

\medskip
In the second step we show that $L$ is the smallest monomial ideal with $r(L)=1$  containing $I$. Let $L'$ be  the unique smallest monomial ideal with $r(L')=1$. Then $L'\subseteq L$. By induction on $i$ we show that $u_{p_i}\in L'$ for $i=1,\ldots,k-1$. Then $L\subseteq L'$, and hence we have equality.
The induction begin is trivial, because $p_1=p$ and $u_p\in I\subseteq L'$. Now let $1<i\leq k-1$. By induction hypothesis, $u_{p_{i-1}}\in L'$. Then
$u_{p}u_{p_{i-1}}\in (L')^2$.  We have $u_{p}u_{p_{i-1}}=x^{c_1+c_{i-1}}y^{d_1+d_{i-1}}$. Suppose $(c_1+c_{i-1}, d_1+d_{i-1})\geq (a,b).$ Then adding the equations $ic=r_{i-1}a+c_1+c_{i-1}$ and $id=s_{i-1}b+d_1+d_{i-1}$, we get
\[
i(c+d)=(i-2)(a+b)+(c_1+c_{i-1})+ (d_1+d_{i-1})\geq (i-2)(a+b)+(a+b)=(i-1)(a+b).
\]
This implies that $i(c-a)+i(d-b)\leq -a-b$, a contradiction.  Therefore, $(c_1+c_{i-1}, d_1+d_{i-1})\not\geq (a,b).$ Then as shown in the first step we have $c_1+c_{i-1}\geq a $ and $d_1+d_{i-1}< b$,  or $c_1+c_{i-1}< a $ and $d_1+d_{i-1}\geq  b$. We may assume  $c_1+c_{i-1}\geq a $ and $d_1+d_{i-1}< b$. Then, as above we get $u_{p}u_{p_{i-1}}=x^au_{p_{i}}$. So $x^au_{p_{i}}\in (L')^2$. Since $r(L')=1$, we have  $x^au_{p_{i}}\in JL'$, and  since $y^b$ does not divide $x^au_{p_{i}}$ it follows that $u_{p_{i}}\in L'$.
\end{proof}

\section{Monomial reductions and powers}
\label{section}

In this part we study the reduction numbers of powers of ideals  which belong to $\Ic_{a,b}$.

The following inequalities  are an immediate consequence of a more general result due to Hoa \cite[Lemma 2.7]{Hoa},  applied to our situation.

\begin{Proposition}
\label{hoa}
 Let $I\in \Ic_{a,b}$.  Then  $r(I^k)\leq \lceil (r(I)-1)/k\rceil +1$.
\end{Proposition}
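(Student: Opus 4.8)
The plan is to recognize the inequality as the special case $\ell=2$ of Hoa's general bound for the reduction number of a power of an ideal, so that the only real work is to identify the analytic spread of the ideals in $\Ic_{a,b}$. Throughout we may assume $K$ is infinite, since the monomial reduction number does not depend on the base field (cf.\ the proof of Theorem~\ref{GLP}).

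First I would record that every $I\in\Ic_{a,b}$ has analytic spread $\ell(I):=\dim F(I)=2$. Indeed, $\sqrt I=(x,y)$ (as $I$ is a proper ideal containing $x^a$ and $y^b$), so $\height I=2$ and hence $\ell(I)\ge 2$; on the other hand, by Lemma~\ref{stapelbed} the ideal $\overline J$ generated by the two elements $x^a+\mm I$ and $y^b+\mm I$ is a reduction of $\mm_{F(I)}$, so that $F(I)$ is module-finite over the subalgebra generated by these two elements, whence $\dim F(I)\le 2$. It is also convenient to note that, comparing graded components,
\[
F(I^k)=\Dirsum_{m\ge 0}I^{km}/\mm I^{km}=F(I)^{(k)},
\]
the $k$-th Veronese subalgebra of $F(I)$; thus, at the level of fiber cones, replacing $I$ by $I^k$ amounts to passing to a Veronese subalgebra.

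Then I would apply \cite[Lemma~2.7]{Hoa}: for an ideal of analytic spread $\ell$ one has $r(I^k)\le \lceil (r(I)-\ell+1)/k\rceil+\ell-1$, and for $\ell=2$ this reads $r(I^k)\le \lceil (r(I)-1)/k\rceil+1$, which is the assertion. The one point that requires care — and essentially the only obstacle — is matching the two notions of reduction number: here $r(\cdot)$ is taken with respect to the unique minimal \emph{monomial} reduction, which by Proposition~\ref{pooja} is $J=(x^a,y^b)$ for $I$ and $(x^{ka},y^{kb})$ for $I^k$, whereas Hoa's statement is phrased for minimal reductions in the usual (local or graded) sense; since each of these monomial reductions is generated by exactly $\ell=2$ elements it is a minimal reduction in that sense as well, and transporting everything through Lemma~\ref{stapelbed} and the Veronese identification above makes Hoa's estimate directly applicable. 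I would also remark that the cruder route via regularity of Veronese subalgebras — using Trung's inequality $r_{\overline J}(\mm_{F(I^k)})\le\reg F(I^k)$ (already invoked in the proof of Theorem~\ref{GLP}) together with the isomorphism $F(I^k)\iso F(I)^{(k)}$ — only gives a bound of the shape $\lceil\reg F(I)/k\rceil$, which is weaker since the inequality $r(I)\le\reg F(I)$ can be strict; Hoa's lemma is exactly what puts $r(I)$, rather than $\reg F(I)$, on the right-hand side.
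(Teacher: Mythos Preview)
Your proposal is correct and follows exactly the paper's approach: the paper states this proposition as ``an immediate consequence of a more general result due to Hoa \cite[Lemma~2.7]{Hoa}, applied to our situation'' and gives no further argument. Your write-up simply fills in the details the paper leaves implicit---namely that $\ell(I)=2$ and that the monomial reductions $(x^a,y^b)$ of $I$ and $(x^{ka},y^{kb})$ of $I^k$ are minimal reductions in the usual sense---so Hoa's bound $r(I^k)\le\lceil (r(I)-\ell+1)/k\rceil+\ell-1$ specializes to the stated inequality.
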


These inequalities imply that $r(I^k)\leq r(I)$ for all $k$. We even expect that
\begin{eqnarray}
\label{weare}
r(I^{k+1})\leq r(I^k)\quad \text{for all} \quad k\geq 1.
\end{eqnarray}
The inequalities (\ref{weare}) imply in particular that if $r(I^{k_0})=1$, then $r(I^k)=1$ for $k\geq k_0$. For $I\in \Ic_{a,b}^1$ we can show this without using \eqref{weare}.

\begin{Theorem}
\label{related}
Let $I\in \Ic_{a,b}^1$, where $I=I_A$ . Furthermore,  let $g=\gcd(a,b)$.
\begin{enumerate}
\item[(i)] Let $\gcd(A)=1$. Then the following conditions are equivalent:
\begin{enumerate}
\item[(a)] $r(I^k)=1$ for some $k$.
\item[(b)] $\{0,1,g-1,g\}\subseteq A$.
\item[(c)] $r(I^k)=1$  for all  $k\geq g-2$.
\end{enumerate}
\item[(ii)]  If $r(I^{k_0})=1$, then $r(I^k)=1$ for $k\geq k_0$.
\end{enumerate}
\end{Theorem}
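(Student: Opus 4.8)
The plan is to convert the equality $r(I^{k})=1$ into a purely combinatorial statement about the sumset $kA\subseteq[0,kg]$, and then to show that in both parts the condition $r(I^{k})=1$ amounts to $kA$ filling the whole interval $[0,kg]$. Setting up the dictionary: since $I=I_{A}\in\Ic_{a,b}^{1}$ we have $I^{k}=I_{A}^{k}=I_{kA}$, whose minimal generators $x^{m(a/g)}y^{kb-m(b/g)}$ ($m\in kA$) lie on the segment joining $(ka,0)$ and $(0,kb)$ and are pairwise incomparable under divisibility; hence $kA$ is exactly the index set of $G(I^{k})$ and, by Corollary~\ref{terriblenew}, the minimal monomial reduction of $I^{k}$ is $(x^{ka},y^{kb})$. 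Thus $I^{k}\in\Ic_{ka,kb}^{1}$ with associated subset $kA$ of $[0,\gcd(ka,kb)]=[0,kg]$, and since $0\in A$ we get $A\subseteq kA$, so $\gcd(kA)=\gcd(A)$. Consequently, whenever $\gcd(A)=1$, Theorem~\ref{redone} applied to $I^{k}$ (legitimate since its ambient $\gcd$ is $kg>1$ once $g\ge2$) gives the key equivalence
\[
r(I^{k})=1\quad\Longleftrightarrow\quad kA=[0,kg].
\]
(If $g=1$ then $A=\{0,1\}$, $I=(x^{a},y^{b})$, $r(I)=0$; I assume $g\ge2$ throughout, the case $g=1$ being degenerate.)

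For part (i) I would prove the cycle $(a)\Rightarrow(b)\Rightarrow(c)\Rightarrow(a)$. For $(a)\Rightarrow(b)$: if $kA=[0,kg]$ for some $k\ge1$, then $1\in kA$, i.e.\ $1$ is a sum of $k$ nonnegative elements of $A$, which (using $0\in A$) forces $1\in A$; likewise $kg-1\in kA$, and since every element of $A$ is at most $g$, a sum of $k$ of them equal to $kg-1$ forces all but one summand to equal $g$ and the last to equal $g-1$, so $g-1\in A$; hence $\{0,1,g-1,g\}\subseteq A$. For $(b)\Rightarrow(c)$: now $kA\supseteq k\{0,1,g-1,g\}$, and parametrising an element of the latter by the multiplicities $m_{1},m_{2},m_{3}$ of $1,g-1,g$ and setting $s=m_{2}+m_{3}$ gives
\[
k\{0,1,g-1,g\}=\bigcup_{s=0}^{k}\bigl[(g-1)s,\ (g-1)s+k\bigr],
\]
a union which runs from $0$ to $kg$ and has no gap precisely when consecutive intervals overlap, i.e.\ when $(g-1)\le k+1$, i.e.\ $k\ge g-2$; hence $kA=[0,kg]$ and $r(I^{k})=1$ for all $k\ge g-2$. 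Finally $(c)\Rightarrow(a)$ is immediate.

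For part (ii) I would first remove the hypothesis $\gcd(A)=1$: writing $d=\gcd(A)$ and $A'=\{i/d:i\in A\}$ one has $I_{A}=(I_{A'})^{[d]}$, and since $(L^{[d]})^{k}=(L^{k})^{[d]}$ for every monomial ideal $L$, we get $(I_{A})^{k}=((I_{A'})^{k})^{[d]}$, whence $r(I_{A}^{k})=r(I_{A'}^{k})$ for all $k$ by Lemma~\ref{deleted}; so we may assume $\gcd(A)=1$. By the dictionary above, $r(I^{k_{0}})=1$ gives $k_{0}A=[0,k_{0}g]$ (and in particular $k_{0}g>1$), and then a one-line induction on $k\ge k_{0}$ finishes: for $k>k_{0}$ we have $k\ge2$, and using the inductive hypothesis $(k-1)A=[0,(k-1)g]$ together with $0,g\in A$,
\[
kA=A+(k-1)A\supseteq\{0,g\}+[0,(k-1)g]=[0,(k-1)g]\cup[g,kg]=[0,kg],
\]
the two intervals overlapping because $g\le(k-1)g$; since also $kA\subseteq[0,kg]$, equality holds and $r(I^{k})=1$.

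The only genuinely computational ingredient is the sumset identity $k\{0,1,g-1,g\}=\bigcup_{s}[(g-1)s,(g-1)s+k]$ together with the gap analysis that produces the threshold $k\ge g-2$; everything else is bookkeeping. The steps I would be most careful about are: that the minimal monomial reduction of $I^{k}$ really is $(x^{ka},y^{kb})$ and that $kA$ is precisely the index set of $G(I^{k})$ (this uses pairwise incomparability of the candidate generators, so that $I_{A}^{k}=I_{kA}$ has no redundancy), the identity $\gcd(kA)=\gcd(A)$, the reduction to $\gcd(A)=1$ in (ii) via $(L^{[d]})^{k}=(L^{k})^{[d]}$ and Lemma~\ref{deleted}, and the small cases $g\le2$ (for $g=2$ the hypothesis $\gcd(A)=1$ forces $A=\{0,1,2\}$ and every assertion is immediate).
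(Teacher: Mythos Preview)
Your proof is correct and follows essentially the same route as the paper: both establish the dictionary $r(I^{k})=1\iff kA=[0,kg]$ via Theorem~\ref{redone}, prove (a)$\Rightarrow$(b) by reading off $1$ and $kg-1$ from $kA$, and prove (b)$\Rightarrow$(c) via the sumset identity $kB=\bigcup_{s=0}^{k}[(g-1)s,(g-1)s+k]$ for $B=\{0,1,g-1,g\}$. Your induction step in (ii) is in fact slightly cleaner than the paper's: the paper invokes part~(i) to use $\{0,1,g-1,g\}\subseteq A$ and computes $(k+1)A\supseteq [0,kg]+\{0,1,g-1,g\}$, whereas you observe that $\{0,g\}\subseteq A$ alone already gives $kA\supseteq\{0,g\}+[0,(k-1)g]=[0,kg]$ once $k\ge2$.
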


\begin{proof}
(i): Note that $I^k =I_{kA}$. Since $\gcd(A)=1$, we have $\gcd(kA)=1$. Therefore,  Theorem~\ref{redone} implies that
\begin{eqnarray}
\label{trivial}
r(I^k)=1 \quad \Longleftrightarrow \quad kA=[0,kg].
\end{eqnarray}
 (a)\implies (b): Let $r(I^k)=1$, and assume that $1\not\in A$. Then $1\not\in kA$, contradicting (\ref{trivial}). By symmetry it also follows that $g-1\in A$.

 (b)\implies (c): By (\ref{trivial}), we have to show that  $kA=[0,kg]$ for $k\geq g-2$. Let $B=\{0,1,g-1,g\}$. By assumption, $B\subseteq A$. Observe that
\begin{eqnarray}
\label{masoomehsays}
 kB=\Union_{i=0}^k[i(g-1),i(g-1)+k] \quad\text{for all}\quad k.
\end{eqnarray}
We prove (\ref{masoomehsays}) by induction on $k$. For $k=1$, the assertion is trivial. Now assume that $kB=\Union_{i=0}^k[i(g-1),i(g-1)+k]$. Then
\begin{eqnarray*}
&&(k+1)B=kB+B=(kB+[0,1])\union (kB+[g-1,g])\\
&=& \Union_{i=0}^k[i(g-1),i(g-1)+k+1]
\union  \Union_{i=0}^k[(i+1)(g-1),(i+1)(g-1)+k+1]\\
&=& \Union_{i=0}^{k+1}[i(g-1),i(g-1)+k+1].
\end{eqnarray*}

In particular, (\ref{masoomehsays}) implies that  $kB=[0,kg]$ for $k\geq g-2$.  Therefore, $[0,kg]=kB\subseteq kA\subseteq [0,kg]$  for $k\geq g-2$, and hence $kA= [0,kg]$ for $k\geq g-2$.

(c)\implies (a) is trivial.

(ii): Let $d=\gcd(A)$. Then $I=(I_{A'})^{[d]}$ with $\gcd(A')=1$ and $I^k=(I_{A'}^k)^{[d]}$. Therefore, by Lemma~\ref{deleted} we have $r(I^k)=r(I_{A'}^k)$ for all $k$. Hence we may assume that $\gcd(A)=1$ and it is enough to show that $r(I^{k+1})=1$ if $r(I^k)=1$. Indeed, $kA=[0,kg]$, and $(k+1)A=kA+A\supseteq kA+\{0,1,g-1,g\}$, because of (i). Hence
\begin{eqnarray*}
(k+1)A&\supseteq &kA+([0,1]\union [g-1,g])= ([0,kg]+[0,1])\union ([0,kg]+[g-1,g])\\
&=&[0,kg+1]\union [g-1,(k+1)g]= [0,(k+1)g],
\end{eqnarray*}
since $g-1\leq kg+1$. Thus (\ref{trivial}) implies that  $r(I^{k+1})=1$.
\end{proof}

The upper bound for the reduction number of $I$ given in Theorem~\ref{cold}  can be improved by $1$ under  additional assumptions.

\begin{Corollary}
\label{trickyteacher}
Let $I\in \Ic_{a,a}^1$, $I=I_A$ with   $\gcd(A)=1$.   Suppose that $r(I^k)=1$ for some $k$.  Then $r(I)\leq a-2$.
\end{Corollary}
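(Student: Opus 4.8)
The plan is to read this off directly from Theorem~\ref{related} and the Gruson--Lazarsfeld--Peskine bound of Theorem~\ref{GLP}; essentially all the real work has already been done in those results, so the proof will be very short. Write $g=\gcd(a,a)=a$. First I would invoke Theorem~\ref{related}(i): since $\gcd(A)=1$ and $r(I^k)=1$ for some $k$, the equivalence (a)$\Leftrightarrow$(b) there forces $\{0,1,g-1,g\}=\{0,1,a-1,a\}\subseteq A$. For $a\ge 3$ these four integers are pairwise distinct, so $|A|\ge 4$.

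Then I would feed this into Theorem~\ref{GLP} (applied with $b=a$), obtaining
\[
r(I)\le \frac{g}{\gcd(A)}-|A|+2 = a-|A|+2 \le a-2,
\]
which is exactly the claim and an improvement by $1$ over the bound $r(I)<a=g/\gcd(A)$ of Theorem~\ref{cold}. I do not expect any genuine obstacle here: the content is entirely packaged into Theorem~\ref{related} (which is what produces the extra indices $1$ and $a-1$ in $A$, hence $|A|\ge 4$) and Theorem~\ref{GLP}. The only point that needs a word of care is that the argument requires $a\ge 3$, since for $a\le 2$ the set $\{0,1,a-1,a\}$ collapses to fewer than four elements and the inequality $r(I)\le a-2$ may genuinely fail (for instance $I=(x,y)^2$ with $a=2$ has $r(I)=1>a-2$), so this degenerate range should be excluded or noted explicitly.
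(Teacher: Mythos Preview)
Your argument is correct and coincides with the alternative proof the paper itself sketches in the remark immediately following Corollary~\ref{trickyteacher}: once Theorem~\ref{related} forces $\{0,1,a-1,a\}\subseteq A$ (hence $|A|\ge 4$ for $a\ge 3$), Theorem~\ref{GLP} yields $r(I)\le a-|A|+2\le a-2$. The paper's \emph{primary} proof takes a different route: it observes that with $x^a,x^{a-1}y,xy^{a-1},y^a\in G(I)$ the fiber cone $F(I)\cong K[u:u\in G(I)]$ satisfies the hypotheses of \cite[Theorem~1.1]{Hi} (a Castelnuovo--Mumford regularity bound for simplicial semigroup rings), giving $\reg F(I)\le a-2$, and then passes to $r(I)$ via the inequality $r(I)\le\reg F(I)$. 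Both arguments share the same first step (Theorem~\ref{related}) and diverge only in which external bound is invoked; your route through Theorem~\ref{GLP} is the more self-contained one, since it stays entirely within results already proved in the paper. Your caveat about $a\le 2$ is correct and worth noting explicitly, as the stated inequality genuinely fails for $a=2$.
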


\begin{proof}
Let $F(I)$ be the fiber cone of $I$. Since $I$ is equigenerated it follows that $F(I)\iso K[\{u\:\; u\in G(I)\}]$.
 Since $g=a$,  Theorem~\ref{related} implies that  $r(I^k)=1$ for some $k$ if and only if $\{0,1,a-1,a\}\subset A$.  Thus, $G(I)=\{f_1x,f_1y,g_1,\ldots,g_r, f_2x, f_2y\}$,  where  $f_1=x^{a-1}$, $f_2=y^{a-1}$ and the $g_i$ are monomials of degree $a$, Therefore, the hypotheses of  \cite[Theorem 1.1]{Hi} are satisfied, and we get $\reg(F(I))\leq a-2$. Hence the desired conclusion follow from Theorem~\ref{vasco}.
\end{proof}

By the result of Gruson-Lazardsfeld-Peskine \cite{GLP}  we have $r(I_A)\leq a-2$ if  $\{0,1,\ldots,a-1,a\}\subset A$. This provides another proof of Corollary~\ref{trickyteacher}.

\medskip

Let  $I\in\Ic_{a,b}^1$ and let $c(I)=\inf\{k\: r(I^k)=1\}$. Then Theorem~\ref{related} implies $c(I)\leq g-2$, if $c(I)<\infty$, where $g=\gcd(a,b)$.

\begin{Proposition}
\label{masoomehproves}
Let $2\leq a\leq b$ be integers with $g=\gcd(a,b)$, and  let $j$ be an integer with $1\leq j\leq g-2$.  Then there exists  an ideal $I\in \Ic_{a,b}^1$  such that $c(I)=j$.
\end{Proposition}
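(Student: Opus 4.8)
The plan is to produce, for each target value $j\in[1,g-2]$, an explicit ideal $I=I_A\in\Ic_{a,b}^1$ with $c(I)=j$, and the natural candidate is dictated by Theorem~\ref{related}. Since $c(I)<\infty$ forces $\{0,1,g-1,g\}\subseteq A$ (when $\gcd(A)=1$), and since by Theorem~\ref{related} we have $c(I)\le g-2$, the idea is to choose $A$ so that $\{0,1,g-1,g\}\subseteq A$ — guaranteeing $c(I)$ is finite — while making $A$ as ``thin'' as possible in the middle, so that the power $k$ needed to fill up $[0,kg]$ is controlled. A clean choice is
\[
A=\{0,1\}\union\{g-j,\ g-j+1,\ \ldots,\ g\},
\]
so that $|A|=j+2$, $A$ contains $\{0,1,g-1,g\}$ (using $j\le g-2$, so $g-j\ge 2$ and the two blocks are disjoint and $g-1$ lies in the second block), and $\gcd(A)=1$ because $0,1\in A$. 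By Lemma~\ref{deleted} and \eqref{trivial} in the proof of Theorem~\ref{related}, $r(I^k)=1$ if and only if $kA=[0,kg]$, so I must compute $\min\{k: kA=[0,kg]\}$ and show it equals $j$.

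The main computation is therefore the combinatorial identity $kA=[0,kg]$ iff $k\ge j$, for the set $A$ above. First I would show $kA\subseteq[0,kg]$ trivially, and that $jA=[0,jg]$: writing an arbitrary element $m\in[0,jg]$, I want to express $m=\sum_{i=1}^{j}a_i$ with $a_i\in A$. Using the block structure, one can peel off copies of $g$ and $g-1$ (the second block of $A$ covers an interval of length $j$, so $\ell$ summands from the second block sweep out $[\ell(g-j),\ell g]$, and these intervals for $\ell=0,\ldots,j$ overlap consecutively since their successive left endpoints differ by $g-j\le g = $ right-step, in fact the overlap condition is $(\ell+1)(g-j)\le \ell g + $ something — precisely $g-j\le jg$ after using the $0,1$ in the first block to fine-tune by $1$); the two extra values $0,1$ in $A$ give the $\pm1$ flexibility needed to hit every integer, not just a coarse residue pattern. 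This is essentially the same induction as \eqref{masoomehsays}, generalized from the block $\{0,1,g-1,g\}$ to $\{0,1\}\cup[g-j,g]$. Conversely, I must show $(j-1)A\ne[0,(j-1)g]$, i.e. some integer in $[0,(j-1)g]$ is missed; the natural candidate to miss is a value like $g-2$ or, more robustly, something forced by a counting/residue obstruction: with only $j-1$ summands, each at most $g$ and at least $0$, but with a ``gap'' in $A$ between $1$ and $g-j$, certain small-but-not-too-small integers (e.g. $g-j-1$, which lies strictly between the reach of ``all summands $\le 1$'' namely $[0,j-1]$ and the smallest element $g-j$ of the upper block) cannot be represented when $j-1<g-j-1$, equivalently $j<(g+1)/2$; for larger $j$ a different witness is needed, so I would instead pick the witness adaptively or, better, choose a slightly different $A$ (for instance shifting the small block) so that one uniform obstruction works for all $j$.

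The hard part will be getting a single clean family $A$ for which both directions — $jA=[0,jg]$ and $(j-1)A\subsetneq[0,(j-1)g]$ — admit uniform, short proofs across the whole range $1\le j\le g-2$; the representability argument is an easy induction, but pinning down an element provably missed by $(j-1)A$ without case distinctions on the size of $j$ relative to $g/2$ is the delicate point. I expect the resolution is to take $A=\{0,1,\,g-j,\,g\}$ (a $4$-element set when $j\ge 2$, or $\{0,1,g-1,g\}$ when $j=1$): then $kA$ is governed by sums $\alpha\cdot 1+\beta(g-j)+\gamma g$ with $\alpha+\beta+\gamma\le k$, and one checks directly via the Chicken McNugget-type reasoning on the two ``slopes'' $g-j$ and $g$ that the smallest $k$ with $kA=[0,kg]$ is exactly $j$ — the value $g-j-1$ (or a suitable translate) serving as the universal witness omitted at level $j-1$ since representing it needs at least $j$ summands given that a single $g-j$ overshoots any smaller positive target that is not a multiple-ish combination. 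I would then invoke \eqref{trivial} to conclude $r(I^j)=1$ and $r(I^{j-1})\ne 1$, hence $c(I)=j$, finishing the proof.
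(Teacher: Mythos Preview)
Your overall strategy---choose an explicit $A$ with $\{0,1,g-1,g\}\subseteq A$, then verify $jA=[0,jg]$ and $(j-1)A\ne[0,(j-1)g]$ via the criterion~\eqref{trivial}---is exactly the paper's approach. However, your indexing is backwards. For $A_j=\{0,1\}\cup[g-j,g]$, writing $kA_j=\bigcup_{\ell=0}^{k}\bigl[\ell(g-j),\,\ell g+(k-\ell)\bigr]$ and checking the overlap condition $(\ell+1)(g-j)\le \ell g+(k-\ell)+1$ gives $k\ge g-j-1+\ell(1-j)$, whose binding case is $\ell=0$; hence $kA_j=[0,kg]$ iff $k\ge g-j-1$, so $c(I_{A_j})=g-j-1$, not $j$. (Concretely: for $j=1$ your set is $\{0,1,g-1,g\}$, which by~\eqref{masoomehsays} has $c(I)=g-2$, not $1$.) Your fallback $\{0,1,g-j,g\}$ has the same defect.

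The fix is simply to reparametrize: take $A=\{0,1\}\cup[j+1,g]$. This is the mirror (under $i\mapsto g-i$) of the paper's choice $A=[0,g-j-1]\cup\{g-1,g\}$, and now your interval computation yields $c(I_A)=j$. With this corrected set the ``hard part'' you anticipated---a uniform witness for $(j-1)A\ne[0,(j-1)g]$---disappears: in $(j-1)A$ the $\ell=0$ block is $[0,j-1]$ and the $\ell=1$ block starts at $j+1$, so the single integer $j$ is always missed, with no case split on the size of $j$ relative to $g/2$. This is exactly the mirror of the paper's witness $(j-1)g-j$.
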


\begin{proof}
Let $j$ be any integer with $1\leq j\leq g-2$, and let $A=[0,g-j-1]\union [g-1,g]$. We claim that $c(I_A)=j$. The claim implies the desired conclusion.

Proof of claim: By (\ref{trivial}) it  suffices to show
\begin{enumerate}
\item[(i)] $[0,jg]=jA$;
\item[(ii)] $(j-1)g-j\notin (j-1)A$.
\end{enumerate}

Proof of (i): It is enough to show that $[0,jg]\subseteq jA $. Let $k\in [0,jg]$, we want to show $k\in jA$. We have
\[
jA=\Union_{i=0}^{j}(i[0,g-j-1]+(j-i)[g-1, g])=\Union_{i=0}^{j}([0,i(g-j-1)]+[(j-i)(g-1),(j-i)g]).
\]
Therefore,
\[
jA=\Union_{i=0}^{j}[(j-i)(g-1),i(g-j-1)+(j-i)g].
\]
Let $I_{i}=[(j-i)(g-1),i(g-j-1)+(j-i)g]$. It suffices to show $I_{i}\bigcap I_{i+1}\neq \emptyset$ for all $0\leq i\leq j-1$, which is equivalent to say that
\[
(j-i)(g-1)\leq ((i+1)(g-j-1)+(j-(i+1))g)+1 \quad \text{for all}\quad    0\leq i\leq j-1.
\]
In other words, we need  $i\leq i(g-j-1)$  for all  $0\leq i\leq j-1$. Indeed, this is satisfied,  since $j\leq g-2$. So (i) is proved.

\medskip
Proof of (ii): Suppose that
\[
(j-i)g-j\in(j-1)A=\Union_{i=0}^{j-1}[(j-1-i)(g-1),i(g-j-2)+(j-1-i)g].
\]
Then
\[
(j-1)g-j\in [(j-1-i)(g-1),i(g-j-2)+(j-1-i)g]\quad \text{for some}\quad i\in [0,j-1].
\]
Hence simultaneously we have,
\begin{enumerate}
\item[(1)]$(j-1-i)(g-1)\leq(j-1)g-j$.
\item[(2)]$(j-1)g-j\leq i(g-j-2)+(i-1-i)d$.
\end{enumerate}
(1) implies that $i\geq1$ and (2) implies that $j\geq(j+2)i$, and it is impossible. So  we have a contradiction.
\end{proof}

The following gives us the limit value of $r(I^k)$ for $k\gg 0$.

\begin{Proposition}
\label{hoa*}
 Let $I\in \Ic_{a,a}$ with $a\geq2$, and let $I_A\subset I$ be the equigenerated part of $I$. Then for all $k\geq a-2$,

 \[
r(I^k) = \left\{
\begin{array}{ll}
1,  &   \text{if $xy^{a-1},x^{a-1}y\in I$,}\\
2, & \text{otherwise.}\\
\end{array}
\right. \]
\end{Proposition}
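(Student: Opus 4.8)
The plan is to reduce everything to statements about the equigenerated part $I_A$ via Hoa's inequality (Proposition~\ref{hoa}) and the earlier monotonicity consequence $r(I^k)\leq r(I)$, and then to handle the two cases separately using Theorem~\ref{related}. First I would observe that $I^k \supseteq I_A^k = I_{kA}$ and that $I_A^k$ is the equigenerated part of $I^k$, since $\nu(u)=k$ exactly for products of $k$ generators all lying on the line $L_0$; so by Proposition~\ref{turtle} we always have $r(I_A^k)\leq r(I^k)$. Write $d=\gcd(A)$; by Lemma~\ref{deleted} we may pass to $A'=A/d$ with $\gcd(A')=1$, noting that $xy^{a-1}\in I$ (resp.\ $x^{a-1}y\in I$) forces $d=1$ anyway when those monomials are among the generators, and more carefully that the membership $xy^{a-1},x^{a-1}y\in I$ is equivalent to $\{0,1,a-1,a\}\subseteq A$ only after rescaling — so I would keep track of whether $1\in A$ and $g-1\in A$ (with $g=\gcd(a,a)=a$).

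For the first case, suppose $xy^{a-1},x^{a-1}y\in I$, i.e.\ $\{0,1,a-1,a\}\subseteq A$ and hence $\gcd(A)=1$. Then Theorem~\ref{related}(i), implication (b)$\Rightarrow$(c), gives $r(I_A^k)=1$ for all $k\geq a-2$, i.e.\ $r(I_A^k)=r(I_{kA})=1$, which by \eqref{trivial} means $kA=[0,ka]$. I then need the full ideal $I^k$, not just its equigenerated part, to have reduction number $1$: this is exactly the situation of Theorem~\ref{onion}. Indeed $I^k = I_{kA} + I''$ where $I''$ is generated by the remaining monomials $u$, all with $\nu(u)>k$; applying Theorem~\ref{onion}(a) (since $r(I_{kA})=1$, we are in the subcase where $I_{kA}\neq J^k$... actually $r(I_{kA})=1>0$, so part (b) applies and gives $r(I^k)=r(I_{kA})=1$). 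The one subtlety is checking that Theorem~\ref{onion} applies with $r=k$: we need every generator $u$ of $I^k$ not in $I_{kA}$ to satisfy $\nu(u)>k$, which holds because such a $u$ is a product of $k$ generators of $I$ at least one of which has $\nu>1$. So $r(I^k)=1$ in this case for $k\geq a-2$.

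For the second case, suppose not both of $xy^{a-1},x^{a-1}y$ lie in $I$; by the $x\leftrightarrow y$ symmetry assume $xy^{a-1}\notin I$. I first want the upper bound $r(I^k)\leq 2$: apply Hoa's inequality (Proposition~\ref{hoa}) to $I^{a-2}$, using that $r(I)\leq a-1$ by Proposition~\ref{goulash}(a) with $a=b$; then $r((I^{a-2}))\leq \lceil (r(I)-1)/(a-2)\rceil+1 \leq \lceil (a-2)/(a-2)\rceil + 1 = 2$, and the same bound holds for all larger $k$ since $r(I^{k+1})\leq r(I^k)$ follows from Proposition~\ref{hoa}. The harder direction — and the main obstacle — is the lower bound $r(I^k)\geq 2$, i.e.\ $r(I^k)\neq 1$, for $k\geq a-2$. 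For this I would use Lemma~\ref{obvious}: it suffices to exhibit monomials $u_p,u_q\in I^k\setminus J^k$ whose product $x^cy^d$ has $c\geq a^{(k)}:=(k)a$ with $d< (k)a$ (or symmetrically) but with $x^{c-ka}y^d\notin I^k$. The natural candidate is built from the generator $x^a$ and a generator $u$ of $I$ with $\nu(u)>1$ that is "as close to the $y$-axis as possible" — the absence of $xy^{a-1}$ from $I$ means every generator $x^cy^d$ with small $c$ has $d$ strictly smaller than $a-1$ could fail... I would instead argue directly: since $xy^{a-1}\notin I$, in the equigenerated part $A$ we have $1\notin A$ or $a-1\notin A$, and then by Theorem~\ref{related}(i)(b)$\not\Rightarrow$ we get $r(I_A^k)\neq 1$, hence $r(I_{kA})\geq 2$; finally invoke $r(I_A^k)\leq r(I^k)$ from the first paragraph to conclude $r(I^k)\geq 2$. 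Combined with the upper bound $r(I^k)\leq 2$, this gives $r(I^k)=2$.

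The step I expect to be genuinely delicate is the reduction "$r(I_A^k)\leq r(I^k)$" together with the identification of $I_A^k$ as the equigenerated part of $I^k$ — one must check that no product of generators of $I$ that involves a strictly-above-the-line factor can accidentally land back on the line $L_0$, which is immediate from $\nu$ being additive and nonnegative, and that $G(I_A^k)\setminus G(J^k I_A^{k-1})$ injects into $G(I^k)\setminus G(J^k I^{k-1})$, which is exactly Proposition~\ref{turtle} applied with the ideal $I^k$ in place of $I$ (its equigenerated part being $(I^k)_0 = I_A^k$). Everything else is bookkeeping with Theorems~\ref{related} and~\ref{onion} and Hoa's inequality.
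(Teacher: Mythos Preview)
Your overall strategy coincides with the paper's---use Theorem~\ref{related} on the equigenerated part, Proposition~\ref{turtle} for the lower bound in the second case, and Hoa's inequality for the upper bound---but there are two genuine gaps in the execution.

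In Case~1 your appeal to Theorem~\ref{onion} is not properly justified. That theorem requires the ideal to equal $I_{kA}+I'$ with $I'=(u:\nu(u)>r)$, i.e.\ the ideal must contain \emph{every} monomial with $\nu>r$; you only verify that the generators of $I^k$ outside $I_{kA}$ have $\nu>k$, which is a different (and in general insufficient) condition. The paper bypasses this entirely: from $kA=[0,ka]$ one has $I_A^k=(x,y)^{ka}$, and since every generator of $I\in\Ic_{a,a}$ has degree $\geq a$ we get $I^k\subseteq (x,y)^{ka}=I_A^k\subseteq I^k$, hence $I^k=(x,y)^{ka}$ and $r(I^k)=1$ directly. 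In Case~2 your assertion that ``$r(I^{k+1})\leq r(I^k)$ follows from Proposition~\ref{hoa}'' is false: Hoa's inequality only yields $r(I^{kl})\leq r(I^k)$ along multiples, and the paper explicitly notes (just after Proposition~\ref{hoa}) that the step-by-step monotonicity is open. The fix is to apply Hoa separately for each $k\geq a-2$, giving $r(I^k)\leq\lceil(r(I)-1)/k\rceil+1\leq 2$. Finally, your citation of Proposition~\ref{goulash}(a) for the bound $r(I)\leq a-1$ is inapt, since that proposition is stated only for $3$-generated ideals $I_p$; the paper invokes Theorem~\ref{cold} here.
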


\begin{proof} Suppose first that $xy^{a-1},x^{a-1}y\in I$. Then $xy^{a-1},x^{a-1}y\in I_A$. Therefore, $r(I_A^k)=1$ for $k\geq a-2$ by Theorem~\ref{related}. Now, since $\gcd(A)=1$, Theorem~\ref{redone} implies that $I_A^k=(x,y)^{ka}$ for all $k\geq a-2$. Since $I_A^k\subseteq I^k$, we also have $I^k=(x,y)^{ka}$.This gives us $r(I^k)=1$ for all $k\geq a-2$.

Next assume that $xy^{a-1}\notin I$ or $x^{a-1}y\notin I$. Then  $xy^{a-1}\notin I_A$ or $x^{a-1}y\notin I_A$, and so Theorem~\ref{related}, implies that $r(I_A^k)\geq 2$ for all $k\geq a-2$.
Note that for all $k$, $I_A^k$ is the ideal equigenerated part of $I^k$.  Therefore, Proposition~\ref{turtle} implies that $r(I^k)\geq r(I_A^k)\geq 2$ for all $k\geq a-2$. On the other hand, Proposition~\ref{hoa} yields that $r(I^k)\leq 2$, because $r(I)\leq a-1$ by Theorem~\ref{cold}. This yields the desired conclusion.
\end{proof}

\medskip


\end{document}